\def\1{\bm{1}}
\def\mA{{\bm{A}}}
\def\mB{{\bm{B}}}
\def\mD{{\bm{D}}}
\def\mI{{\bm{I}}}
\def\mM{{\bm{M}}}
\def\mQ{{\bm{Q}}}
\def\mX{{\bm{X}}}
\DeclareMathAlphabet{\mathsfit}{\encodingdefault}{\sfdefault}{m}{sl}
\SetMathAlphabet{\mathsfit}{bold}{\encodingdefault}{\sfdefault}{bx}{n}
\newcommand{\R}{\mathbb{R}}
\renewcommand*{\backref}[1]{}
\renewcommand*{\backrefalt}[4]{%
   \ifcase #1 %
     \footnotesize{(Not cited.)}%
   \or
     \footnotesize{(Cited on page~#2)}%
   \else
     \footnotesize{(Cited on page~#2)}%
\fi }
\definecolor{bgcolor}{rgb}{0.76,0.88,0.50}
\definecolor{bgcolor0}{rgb}{0.93,0.99,1}
\definecolor{bgcolor1}{rgb}{0.8,1,1}
\definecolor{bgcolor2}{rgb}{0.8,1,0.8}
\definecolor{bgcolor3}{rgb}{0.50,0.90,0.50}
\definecolor{yaleblue}{rgb}{0.06, 0.3, 0.57}
\newcommand{\algname}[1]{{\small \sf #1}}
\newcommand{\squeeze}{\textstyle}
\newcommand{\norm}[1]{\left\| #1 \right\|}
\newcommand{\inp}[2]{\left\langle#1,#2\right\rangle} 
\newcommand{\Exp}[1]{{\rm \mathbb{E}}\left[#1\right]}
\newcommand{\ExpSub}[2]{{\rm \mathbb{E}}_{#1}\left[#2\right]}
\newcommand{\parens}[1]{\left( #1 \right)}
\newcommand{\brac}[1]{\left\{ #1 \right\}}
\DeclareMathOperator{\range}{range}     
\newcommand{\cO}{\mathcal{O}}
\newcommand{\cS}{\mathcal{S}}
\newcommand{\cX}{\mathcal{X}}
\newcommand{\ct}{\eta}
\newcommand{\blambda}{\boldsymbol{\Lambda}}
\newcommand{\eqdef}{:=} 
\newcommand{\vast}{\bBigg@{4}}
\newtheorem{fact}{Fact}
\theoremstyle{plain}
\newtheorem{theorem}{Theorem}[section]
\newtheorem{lemma}[theorem]{Lemma}
\theoremstyle{definition}
\newtheorem{definition}[theorem]{Definition}
\newtheorem{assumption}[theorem]{Assumption}
\theoremstyle{remark}
\newtheorem{remark}[theorem]{Remark}
\newtheorem{example}[theorem]{Example}
\newcommand{\prox}{\textnormal{prox}}
\title{Tighter Performance Theory of FedExProx}
\author{Wojciech Anyszka\thanks{The work of Wojciech Anyszka was conducted during a VSRP internship at KAUST.} \,$^1$,
Kaja Gruntkowska$^2$,
Alexander Tyurin$^{3,4}$ \&
Peter Richt{\'a}rik$^2$ \\
$^1$ Princeton University, Princeton, United States \\
$^2$ King Abdullah University of Science and Technology, Thuwal, Saudi Arabia \\
$^3$ AXXX, Moscow, Russia \\
$^4$ Applied AI Institute, Moscow, Russia
}
\begin{document}

\maketitle

\begin{abstract}
    We revisit \algname{FedExProx}--a recently proposed distributed optimization method designed to enhance convergence properties of parallel proximal algorithms via extrapolation. In the process, we uncover a surprising flaw: its known theoretical guarantees on quadratic optimization tasks are no better than those offered by the vanilla Gradient Descent (\algname{GD}) method. Motivated by this observation, we develop a novel analysis framework, establishing a tighter linear convergence rate for non-strongly convex quadratic problems. By incorporating both computation and communication costs, we demonstrate that \algname{FedExProx} can indeed provably outperform \algname{GD}, in stark contrast to the original analysis. Furthermore, we consider partial participation scenarios and analyze two adaptive extrapolation strategies--based on gradient diversity and Polyak stepsizes---again significantly outperforming previous results. Moving beyond quadratics, we extend the applicability of our analysis to general functions satisfying the Polyak-Łojasiewicz condition, outperforming the previous strongly convex analysis while operating under weaker assumptions. Backed by empirical results, our findings point to a new and stronger potential of \algname{FedExProx}, paving the way for further exploration of the benefits of extrapolation in federated learning.
\end{abstract}

\section{Introduction}\label{sec:intro}
Federated Learning (FL) \citep{konevcny2016federated, mcmahan2017communication} is a distributed machine learning paradigm where multiple edge devices collaboratively train a global model without the need for centralized data collection. Instead of transmitting raw data, each client computes updates based on their local datasets, and these are then aggregated at a central server to update the global model. This approach ensures that no sensitive information is shared with the server or other clients, making it particularly useful in privacy-sensitive domains, such as healthcare \citep{rieke2020future} and recommendation systems \citep{hard2018federated}.
Formally, FL solves the optimization problem
\begin{align}
    \label{eq:main_problem}
    \squeeze \min\limits_{x \in \R^d}\left\{f(x) \eqdef \frac{1}{n} \sum\limits_{i=1}^n f_i(x)\right\},
\end{align}
where $f_i: \R^d \to \R$ represents the empirical risk associated with the local dataset stored on client~$i \in [n]$.
Despite its advantages, FL introduces several nontrivial challenges, including \textit{communication delays} caused by limited network bandwidth, and \textit{partial participation} of clients due to random outages \citep{kairouz2021advances}. Addressing these issues is essential to making FL efficient and reliable in real-world applications.

\subsection{Related Work}\label{sec:lit_review}
Let us briefly review several existing methods used to solve problem \eqref{eq:main_problem} in FL scenarios. For a detailed overview of the notation used, please refer to Table~\ref{table:notation}.

\textbf{Gradient descent.} The simplest and most naive approach is the Gradient Descent (\algname{GD}) method \citep{nesterov2018lectures}, iterating 
$x_{k+1} = x_k - \gamma \nabla f(x_k) = x_k - \gamma \frac{1}{n} \sum_{i=1}^{n} \nabla f_i(x_k),$
where $x_0$ is a starting point and $\gamma>0$ is the stepsize. 
Let us introduce the key assumptions typically employed in the analysis of \algname{GD} in the convex world.

\begin{assumption}
    \label{ass:convex}
    The functions $f_i$ are proper, closed and convex for all $i \in [n]$, and the function $f$ attains a minimum at some (potentially non-unique) point $x_* \in \R^d.$  
\end{assumption}
\begin{assumption}
    \label{ass:local_lipschitz_constant}
    The function $f$ is $L$--smooth, i.e., $\norm{\nabla f(x) - \nabla f(y)} \leq L \norm{x - y}$ $\forall x, y \in \R^d.$ Additionally, each $f_i$, $i \in [n],$ is differentaible and $L_i$--smooth, with $L_{\max} \eqdef \max_{i \in [n]} L_i.$
    \end{assumption}
Under these assumptions, classical optimization theory guarantees that \algname{GD} solves problem~\eqref{eq:main_problem} and finds $\bar{x} \in \R^d$ such that $f(\bar{x}) - f(x_*) \leq \varepsilon$ in~$\cO(\nicefrac{L R^2}{\varepsilon})$ iterations, where~$R^2 \eqdef \norm{x_* - x_0}^2.$ 
In an FL setting, finding a solution to the main problem requires each client to participate in every communication round, and thus to communicate with the server
\begin{align}
    \label{eq:gd}
    \squeeze \cO\left(\frac{L R^2}{\varepsilon}\right)
\end{align}
times, transmitting the local gradient $\nabla f_i(x_k)$ at each iteration $k$. While this approach is conceptually simple, its direct application in distributed environments leads to several challenges related to communication overhead and scalability.

\textbf{Partial participation.}
In real-world FL scenarios, not all clients remain active for the entire duration of training. Instead, a subset of clients is chosen at each iteration, typically based on practical factors such as device availability (e.g., battery life or network conditions) or statistical considerations (e.g., data heterogeneity) \citep{kairouz2021advances, tyurin2022computation}.

\textbf{Communication bottleneck.}
Another key problem in distributed training is the \textit{communication bottleneck} \citep{ramesh2021zero, kairouz2021advances}. Since the overall performance of a distributed algorithm is the product of the number of communication rounds $K$ needed to find a solution and the cost $C$ of one such round, there exist two main strategies to addressing this issue: $(i)$~minimizing~$C$ and/or $(ii)$~minimizing~$K$.
Objective $(i)$~is typically achieved via compression of the information transmitted between the clients and the server \citep{beznosikov2020biased, gruntkowska2024improving}. Objective $(ii)$~can be accomplished by increasing the amount of local computation performed by clients between rounds, allowing for less frequent communication.

\textbf{FedAvg and modern local methods.}
A common strategy for ensuring communication efficiency that falls into category $(ii)$ is \textit{local training}. The idea is simple: allow clients to do more work before transmitting the results to the server. If executed with care, this approach leads to each communication round providing more ``informative'' updates, ultimately resulting in more effective changes to the global model.
One of the most popular methods in this class is Federated Averaging (\algname{FedAvg}), introduced by \citet{mcmahan2017communication}.
Within this framework, a subset of clients is selected in each round to perform local training using a gradient-based method of their choice (e.g., \algname{GD}). This is followed by an averaging step on the server, where the local updates are aggregated.
Despite its popularity, the theoretical properties of \algname{FedAvg} are somewhat limited \citep{khaled2019first,koloskova2020unified}. A major issue is \textit{client drift}, which can lead to slower convergence \citep{karimireddy2020scaffold}. This problem can be mitigated using modern drift correction techniques \citep{karimireddy2020scaffold,gorbunov2021local,mishchenko2022proxskip}. One notable advancement in this area is \algname{FedExP}, introduced by \citet{jhunjhunwala2023fedexp}, which extends \algname{FedAvg} by leveraging extrapolation to better control server updates.

\textbf{FedProx and proximal methods.}
All methods discussed so far assume that clients have access to the gradients $\{\nabla f_i\}$ of local functions. The class of \textit{proximal methods} \citep{bertsekas2011incremental, ryu2014stochastic, khaled2022faster, richtarik2024unified} relies on a different, more powerful oracle--the \emph{proximal operator} $\prox_{\gamma f} : \R^d \rightarrow \R^d$, defined by
\begin{align}
    \label{eq:prox}
    \squeeze \prox_{\gamma f} (x) = \arg\min\limits_{z \in \R^d} \left\{f(z) + \frac{1}{2 \gamma} \norm{z - x}^2\right\}.
\end{align}
Here, $f$ is a convex function and $\gamma>0$ plays a rule of a regularization parameter.
It is important to note that computing $\prox_{\gamma f} (\cdot)$ is an optimization problem on its own. However, the analyses of such methods typically do not consider how this subproblem should be solved, and instead assume that computing $\prox_{\gamma f} (\cdot)$ is computationally cheap and can be performed using any iterative solver, e.g., \algname{GD}, \algname{Newton's method} or \algname{LBFGS} \citep{nesterov2018lectures,liu1989limited}.

There exist numerous proximal methods addressing the single-node case. A classic approach is the Proximal Point Method (\algname{PPM}) by \citet{rockafellar1976monotone}, which iterates
$x_{k+1}=\prox_{\gamma_k f} (x_k)$. Stochastic versions of \algname{PPM} have also been studied \citep{ryu2014stochastic, bianchi2016ergodic, patrascu2018nonasymptotic}. Recently, interest in stochastic proximal methods has surged, with promising results from several works \citep{condat2022RandProx, traore2024variance, sadiev2024stochastic, combettes2024randomly}.
In the context of distributed learning, one relevant work by \citet{li2020federated} builds on the principles of \algname{FedAvg} to introduce its proximal variant--\algname{FedProx}. The method replaces local gradient-based updates with proximal operator computations, leading to the update rule
\begin{align}
    \label{eq:fedprox}
    \squeeze x_{k+1} = \frac{1}{n} \sum\limits_{i=1}^n \prox_{\gamma f_i} (x_k).
\end{align}
By leveraging proximal operators, \algname{FedProx} addresses some of the limitations of traditional \algname{GD} in FL. However, simply averaging local updates at the server remains suboptimal and can be improved.

\textbf{FedExProx.}
A recent advancement, and the main focus of this paper, is \algname{FedExProx}, introduced by \citet{li2024power}. This method improves on \algname{FedProx} by employing the technique of \textit{extrapolation}\footnote{Notably, the idea of extrapolation was explored by \citet{jhunjhunwala2023fedexp} in developing \algname{FedExP}.} \citep{combettes1997convex,necoara2019randomized}. The algorithm operates under the \emph{interpolation regime}:
\begin{assumption}
    \label{ass:inter}
    There exists $x_* \in \R^d$ such that $\nabla f_i(x_*) = 0$ for all $i \in [n].$ We denote the set of such minimizers by $\cX_* \eqdef \{x\in\R^d: \nabla f_i(x) = 0\}$ and the projection of $x$ onto $\cX_*$ by $\Pi(x)$ .
\end{assumption}
This assumption is relatively mild and is often met in practice, e.g., for over-parameterized models \citep{ma2018power}.
Instead of \eqref{eq:fedprox}, \algname{FedExProx} introduces the extrapolated update
\begin{align}
    \label{eq:fedexprox}
    \squeeze x_{k+1} = x_{k} + \alpha_k \left(\frac{1}{n} \sum\limits_{i=1}^n \prox_{\gamma f_i} (x_k) - x_{k}\right),
\end{align}
where $\alpha_k$ serves as the \emph{extrapolation parameter}. The algorithm is analyzed by reformulating step~\eqref{eq:fedexprox} in terms of \textit{Moreau envelopes} \citep{moreau1965}: the update rule can be expressed equivalently as
\begin{align}\label{eq:fedexprox_moreau}
    \squeeze x_{k+1} = x_{k} - \alpha_k \gamma \frac{1}{n} \sum\limits_{i=1}^n \nabla M_{f_i}^{\gamma}(x_{k}),
\end{align}
where the Moreau envelope $M_f^{\gamma} \,:\, \R^d \to \R$ of a convex function $f$ is defined as
\begin{align}\label{eq:moreau}
    \squeeze M_f^{\gamma}(x) \eqdef \min\limits_{z \in \R^d} \left\{f(z) + \frac{1}{2 \gamma} \norm{z - x}^2\right\} \quad \forall x \in \R^d.
\end{align}
Notably, when $\alpha_k=1$, \eqref{eq:fedexprox} reduces to \eqref{eq:fedprox}. However, this choice is not optimal, and choosing $\alpha_k > 1$ yields better complexity.
If $\alpha_k$ is set to be constant across iterations of the algorithm, then using the optimal value $\alpha_k \equiv \alpha = \frac{1}{\gamma L_{\gamma}} >1$ guarantees that \algname{FedExProx} converges after 
\begin{align}
    \label{eq:fedexproxcompl}
    \squeeze \cO\left(\frac{L_{\gamma} (1 + \gamma L_{\max}) R^2}{\varepsilon}\right)
\end{align}
communication rounds in the convex case, where $L_{\gamma}$ is the smoothness constant of the function $M^{\gamma}(x) \eqdef \frac{1}{n}\sum_{i=1}^n M_{f_i}^{\gamma}(x)$. \citet{li2024power} show that \eqref{eq:fedexproxcompl} is not larger than $\cO\left(\nicefrac{L_{\max} R^2}{\varepsilon}\right)$, and can be significantly smaller in practical scenarios. Thus, \algname{FedExProx} offers provably better iteration complexity compared to both \algname{FedProx} and \algname{FedExP} \citep{li2020federated, jhunjhunwala2023fedexp}.

\section{Contributions}

\textbf{1.} Although the iteration complexity \eqref{eq:fedexproxcompl} of \algname{FedExProx} outperforms both \algname{FedProx} and \algname{FedExP}, we find that it is \emph{not superior} to that of the simplest baseline, \algname{GD}, on quadratic optimization tasks (see Remark~\ref{rem:pessimistic}). This raises a critical question regarding the utility of employing the proximal oracle in the first place.

\textbf{2.} Motivated by these pessimistic findings, we revisit the analysis of \algname{FedProx} and prove a significantly more optimistic iteration complexity result. We establish a \emph{linear} convergence rate for solving non-strongly convex quadratic distributed optimization problems of the form \eqref{eq:main_problem} (see Theorem \ref{thm:quad}) and demonstrate that our new result does indeed lead to the conclusion that \algname{FedExProx} can significantly outperform \algname{GD}. Our analysis assumes a realistic model that accounts for both computation and communication times--critical factors in real-world distributed optimization. We show that the total \textit{time complexity} of \algname{FedExProx} is never worse than that of \algname{GD}, and can be strictly better when communication time dominates computation time (see Theorem~\ref{thm:fedexprox_quad_time}), which is typically the case in FL scenarios. This stands in stark contrast to the previous analysis by \citet{li2024power}, underscoring the superiority of our new findings.

\textbf{3.} To account for the stochastic setting that involves partial participation of clients, we complement the above results with client sampling (see Theorems \ref{thm:quad_iter_stoch}, \ref{thm:fedexprox_quad_time_stoch}, \ref{thm:fedexprox_pl_iter_stoch} and \ref{thm:quad_iter_ada_stoch}).

\textbf{4.} Beyond constant extrapolation, we establish two novel results (Theorems \ref{thm:quad_iter_ada} and \ref{thm:quad_iter_ada_stoch}) that incorporate smoothness-adaptive strategies, based on gradient diversity (\algname{FedExProx-GraDS}), and Polyak stepsize (\algname{FedExProx-StoPS}), again significantly improving upon the result previously established by \citet{li2024power} in the quadratic case.

\textbf{5.} We extend the analysis beyond quadratics to \emph{arbitrary convex functions} satisfying the Polyak-{\L}ojasiewicz (P{\L}) condition, obtaining a linear convergence rate (see Theorems \ref{thm:fedexprox_pl_iter} and \ref{thm:fedexprox_pl_iter_stoch}). In contrast, \citet{li2024power} derived a linear rate under strong convexity for the function $f$. Our approach is not only more general, as it relies on weaker assumptions, but also demonstrates improved dependence on problem-specific constants. Additionally, we establish a result in the P{\L} setting that accounts for inexact computations of the proximal mappings (Theorem \ref{thm:fedexprox_pl_inexact}).

\textbf{6.} The theoretical findings are validated with empirical experiments, which demonstrate the robustness and applicability of our framework.

\section{Not Better than \algname{GD} on Quadratics}
\label{sec:not_better}

Let us take a closer look at the complexity result \eqref{eq:fedexproxcompl}.
Suppose for now that we solve~\eqref{eq:prox} using \algname{GD}. Then, the number of iterations needed to find $\prox_{\gamma f_i} (x)$ with accuracy $\varepsilon$ is
\begin{align*}
    \squeeze \cO\left(\frac{L_i + \nicefrac{1}{\gamma}}{\nicefrac{1}{\gamma}} \log \frac{1}{\varepsilon}\right) = \cO\left((\gamma L_i + 1) \log \frac{1}{\varepsilon}\right), 
\end{align*}
since the function $f_i(z) + \frac{1}{2 \gamma} \norm{z - x}^2$ is $(L_i + \nicefrac{1}{\gamma})$--smooth and $\nicefrac{1}{\gamma}$--strongly convex. Hence,~$\gamma$ controls the difficulty of calculating the proximal operator: the larger it is, the more difficult the problem. Therefore, to accelerate local computations, one would prefer to choose $\gamma$ as small as possible.
We can formalize this intuition by letting $\pi(\gamma)$ be the time per one iteration of \algname{FedExProx} and making the following reasonable assumption:
\begin{assumption}
    \label{ass:local_compl}
    The time complexity $\pi(\gamma)$ of a \algname{FedExProx} step is a non-decreasing function of~$\gamma$.
\end{assumption}
Combining it with the iteration complexity result \eqref{eq:fedexproxcompl} established by \citet{li2024power}, the total time required by \algname{FedExProx} with $\alpha_k \equiv \nicefrac{1}{\gamma L_{\gamma}}$ to find $\bar{x}$ such that $\mathbb{E}[f(\bar{x})] - f(x_*) \leq \varepsilon$ is
\begin{align*}
    \squeeze T(\gamma) \eqdef \pi(\gamma) \times \frac{L_{\gamma} (1 + \gamma L_{\max}) R^2}{\varepsilon}.
\end{align*}
With this definition in place, we can now translate the main result of \citet{li2024power} (Theorem 1) to quadratic optimization problems, yielding the following pessimistic outcome:
\begin{restatable}{theorem}{THMPESSIMISTIC}\label{thm:pessimistic}
    Let Assumptions~\ref{ass:inter} and~\ref{ass:local_compl} hold. Consider solving a non-strongly convex quadratic optimization problem of the form \eqref{eq:main_problem}, where
        $f_i(x) = \frac{1}{2} x^\top \mA_i x - b_i^\top x$
    for all $i \in [n],$ with $\mA_i \in \textnormal{Sym}^{d}_{+}$ and $b_i \in \R^d$.
    Then the assumptions of Theorem 1 by \citet{li2024power} hold, and
    \begin{align}
        \label{eq:pess_inequality}
        \squeeze T(\gamma) = \pi(\gamma) \times \frac{L_{\gamma} (1 + \gamma L_{\max}) R^2}{\varepsilon} \geq \pi(0) \times \frac{L R^2}{\varepsilon}
    \end{align}
    for all $\gamma > 0.$ Moreover, when $\gamma \to 0$, then $\pi(\gamma) \times \frac{L_{\gamma} (1 + \gamma L_{\max}) R^2}{\varepsilon} \to \pi(0) \times \frac{L R^2}{\varepsilon}$, and \algname{FedExProx} effectively reduces to \algname{GD}.
\end{restatable}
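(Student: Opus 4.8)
The plan is to reduce the claimed time-complexity inequality \eqref{eq:pess_inequality} to a single spectral inequality about the smoothness of the averaged Moreau envelope, and then settle the limiting claim by a short continuity argument. Throughout I would write $\mA \eqdef \frac{1}{n}\sum_{i=1}^n \mA_i$, so that $f(x) = \frac{1}{2} x^\top \mA x - b^\top x$ with $b \eqdef \frac{1}{n}\sum_i b_i$, and recall that $L = \lambda_{\max}(\mA)$, $L_i = \lambda_{\max}(\mA_i)$ and $L_{\max} = \max_i L_i$. Since each $\mA_i \in \textnormal{Sym}^d_+$, the $f_i$ are convex and $L_i$--smooth, while Assumption~\ref{ass:inter} supplies an interpolating minimizer; hence Assumptions~\ref{ass:convex}--\ref{ass:inter} hold, the hypotheses of Theorem 1 of \citet{li2024power} are met, \eqref{eq:fedexproxcompl} is a valid iteration bound, and $T(\gamma)$ is well defined. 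This disposes of the first assertion of the theorem.

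First I would compute the Moreau envelope explicitly. Solving \eqref{eq:prox} for a quadratic gives $\prox_{\gamma f_i}(x) = (\mI + \gamma\mA_i)^{-1}(x + \gamma b_i)$, and the identity $\nabla M_{f_i}^{\gamma}(x) = \frac{1}{\gamma}\big(x - \prox_{\gamma f_i}(x)\big)$ then yields $\nabla M_{f_i}^{\gamma}(x) = (\mI + \gamma\mA_i)^{-1}(\mA_i x - b_i)$. Consequently $M^{\gamma}$ has constant Hessian $\frac{1}{n}\sum_i \mA_i(\mI + \gamma\mA_i)^{-1}$, and its smoothness constant is $L_{\gamma} = \lambda_{\max}\!\big(\frac{1}{n}\sum_i \mA_i(\mI + \gamma\mA_i)^{-1}\big)$.

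The crux is a lower bound on $L_{\gamma}$. Every eigenvalue $\lambda$ of $\mA_i$ lies in $[0, L_{\max}]$, on which the scalar inequality $\frac{\lambda}{1 + \gamma\lambda} \geq \frac{\lambda}{1 + \gamma L_{\max}}$ holds; since $\mA_i(\mI + \gamma\mA_i)^{-1}$ and $\mA_i$ are both functions of $\mA_i$ and thus simultaneously diagonalizable, this lifts to the Loewner inequality $\mA_i(\mI + \gamma\mA_i)^{-1} \succeq \frac{1}{1+\gamma L_{\max}}\mA_i$. Averaging over $i$ gives $\frac{1}{n}\sum_i \mA_i(\mI + \gamma\mA_i)^{-1} \succeq \frac{1}{1+\gamma L_{\max}}\mA$, and monotonicity of $\lambda_{\max}$ under the Loewner order yields $L_{\gamma} \geq \frac{L}{1 + \gamma L_{\max}}$, i.e. $L_{\gamma}(1 + \gamma L_{\max}) \geq L$. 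Multiplying by $R^2/\varepsilon$ and invoking $\pi(\gamma) \geq \pi(0)$ from Assumption~\ref{ass:local_compl} delivers \eqref{eq:pess_inequality}. I expect this spectral lift -- spotting the right scalar inequality and passing from eigenvalues to the Loewner order -- to be the only genuinely non-mechanical step; the rest is bookkeeping.

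Finally, for the limiting claim I would let $\gamma \to 0$: then $(\mI + \gamma\mA_i)^{-1} \to \mI$, so the Hessian of $M^{\gamma}$ tends to $\mA$ and, by continuity of $\lambda_{\max}$, $L_{\gamma} \to L$, while $1 + \gamma L_{\max} \to 1$; hence $\frac{L_{\gamma}(1+\gamma L_{\max})R^2}{\varepsilon} \to \frac{L R^2}{\varepsilon}$, and together with continuity of $\pi$ at $0$ this gives the stated convergence of $T(\gamma)$. To justify that \algname{FedExProx} reduces to \algname{GD}, observe that with $\alpha_k \equiv \frac{1}{\gamma L_{\gamma}}$ the update \eqref{eq:fedexprox_moreau} has effective stepsize $\alpha_k \gamma = \frac{1}{L_{\gamma}}$, and since $\nabla M^{\gamma}(x) = \frac{1}{n}\sum_i (\mI + \gamma\mA_i)^{-1}(\mA_i x - b_i) \to \nabla f(x)$ as $\gamma \to 0$, the iteration tends to $x_{k+1} = x_k - \frac{1}{L}\nabla f(x_k)$, i.e. \algname{GD} with stepsize $1/L$.
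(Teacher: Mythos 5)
Your proof is correct, and its central step coincides with the paper's: your Hessian $\frac{1}{n}\sum_{i=1}^n \mA_i(\mI+\gamma\mA_i)^{-1}$ is the paper's matrix $\mM = \frac{1}{n}\sum_{i=1}^n\frac{1}{\gamma}\left(\mI - (\gamma\mA_i+\mI)^{-1}\right)$ via the resolvent identity, and your Loewner lift of the scalar inequality $\frac{\lambda}{1+\gamma\lambda}\geq\frac{\lambda}{1+\gamma L_{\max}}$ is exactly the paper's diagonal-entry computation $\frac{1+\gamma L_{\max}}{\gamma}\cdot\frac{\gamma\lambda_j(\mA_i)}{1+\gamma\lambda_j(\mA_i)}\geq\lambda_j(\mA_i)$, merely phrased through simultaneous diagonalizability rather than written out in the eigenbasis. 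Where you genuinely diverge is the limiting claim: the paper expands $\mI-(\mI+\gamma\mA_i)^{-1}$ as the Neumann series $\sum_{k\geq 1}(-1)^{k-1}\gamma^k\mA_i^k$ and invokes the dominated convergence theorem to exchange limit and summation, whereas you argue directly from continuity of $\gamma\mapsto(\mI+\gamma\mA_i)^{-1}$ and of $\lambda_{\max}$; your route is more elementary and sidesteps the convergence-radius restriction ($\gamma<1/\lambda_{\max}(\mA_i)$) that the series argument carries implicitly --- since only the limit $\gamma\to 0$ matters, both are valid, but yours is cleaner. One point worth noting: you invoke ``continuity of $\pi$ at $0$,'' which Assumption~\ref{ass:local_compl} does not actually provide --- monotonicity only guarantees $\lim_{\gamma\to 0^+}\pi(\gamma)=\inf_{\gamma>0}\pi(\gamma)\geq\pi(0)$, possibly with strict inequality. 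The paper's proof silently makes the same assumption (it only proves convergence of the factor $(1+\gamma L_{\max})L_{\gamma}$ to $L$ and never addresses $\pi(\gamma)\to\pi(0)$), so this is a gap in the theorem statement you inherited rather than a flaw of yours; making the hypothesis explicit, as you did, is the more careful choice. Finally, your reduction-to-\algname{GD} argument (effective stepsize $\alpha\gamma=\nicefrac{1}{L_{\gamma}}\to\nicefrac{1}{L}$ together with the explicit computation $\nabla M^{\gamma}(x)=\frac{1}{n}\sum_{i=1}^n(\mI+\gamma\mA_i)^{-1}(\mA_i x-b_i)\to\nabla f(x)$) is a concrete, quadratic-specific instantiation of the paper's appeal to pointwise convergence of Moreau envelopes from \citet{rockafellar1998variational}, and equally sound.
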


\begin{remark}\label{rem:pessimistic}
    In light of Theorem~\ref{thm:pessimistic}, \algname{GD} performs no worse than \algname{FedExProx} by \citet{li2024power}, even if the time complexity $\pi(\gamma)$ of one \algname{FedExProx} step does not depend on $\gamma$. Indeed, it holds that 
    $$\squeeze \frac{L_{\gamma} (1 + \gamma L_{\max}) R^2}{\varepsilon} \geq \frac{LR^2}{\varepsilon}$$ for all $\gamma > 0$ (see the proof of Theorem \ref{thm:pessimistic}).
\end{remark}

These results, based on the original analysis of \algname{FedExProx}, lead us to a rather disappointing conclusion: from a theoretical time complexity perspective, the optimal strategy appears to be using vanilla \algname{GD}, disregarding proximal oracles entirely. The question that remains is: is this an inherent limitation of the method itself or merely a consequence of suboptimal analysis by \citet{li2024power}?

\section{Tighter and More Optimistic Results}
\label{sec:tight}

Our refined analysis of \algname{FedExProx} demonstrates that a tighter iteration complexity bound \emph{can} be derived, leading to significantly more optimistic results. In the following sections, we present a novel analysis that provides an improved complexity result for \algname{FedExProx} and sheds light on its true capabilities. To ensure clarity and build intuition, we begin with quadratic optimization tasks, where all relevant quantities can be explicitly derived. In Section \ref{sec:pl}, we extend these advancements to \emph{general convex functions} satisfying the P{\L} condition.

\begin{restatable}{theorem}{FEDEXPROXQITERTIGHT}\label{thm:quad}
    Fix any $\gamma > 0$ and consider solving non-strongly convex quadratic optimization problem \eqref{eq:main_problem} where $f_i(x) = \frac{1}{2} x^\top \mA_i x - b_i^\top x$
    for all $i \in [n],$ with $\mA_i \in \textnormal{Sym}^{d}_{+}$ and $b_i \in \R^d$. Under Assumption \ref{ass:inter}, \algname{FedExProx} with $\alpha = \nicefrac{1}{\gamma L_{\gamma}}$ finds $\bar{x}$ such that $\mathbb{E}[f(\bar{x})] - f(x_*) \leq \varepsilon$ after
    \begin{align}
        \label{eq:tighter_quad}
        \squeeze \cO\left(\frac{L_{\gamma}}{\mu^{+}_{\gamma}} \log \frac{1}{\varepsilon}\right)
    \end{align}
    iterations, where $L_{\gamma}$ is a smoothness constant of $M^{\gamma}$ and $\mu^{+}_{\gamma}$ is the smallest non-zero eigenvalue of the matrix $\nabla^2 M^{\gamma}.$
\end{restatable}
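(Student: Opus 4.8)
The plan is to exploit the fact that for quadratic $f_i$ everything is explicitly linear: I would reduce \algname{FedExProx} to a single fixed linear iteration and then extract a ``hidden strong convexity'' that lives on the range of the aggregate Moreau Hessian. First I would compute the relevant operators in closed form. For $f_i(x)=\frac12 x^\top\mA_i x-b_i^\top x$, solving the inner problem in \eqref{eq:prox} gives $\prox_{\gamma f_i}(x)=(\mA_i+\frac1\gamma\mI)^{-1}(b_i+\frac1\gamma x)$, and the standard identity $\nabla M_{f_i}^\gamma(x)=\frac1\gamma(x-\prox_{\gamma f_i}(x))$ simplifies to $\nabla M_{f_i}^\gamma(x)=\frac1\gamma(\mA_i+\frac1\gamma\mI)^{-1}(\mA_i x-b_i)$. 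Thus each $M_{f_i}^\gamma$ is quadratic with constant Hessian $\mathbf{H}_i:=\mA_i(\mI+\gamma\mA_i)^{-1}\succeq0$, and $M^\gamma$ is quadratic with Hessian $\mathbf{H}:=\nabla^2 M^\gamma=\frac1n\sum_i\mathbf{H}_i$, so that $L_\gamma=\lambda_{\max}(\mathbf{H})$ and $\mu_\gamma^+$ is its smallest nonzero eigenvalue. Substituting into the Moreau form \eqref{eq:fedexprox_moreau}, and invoking Assumption~\ref{ass:inter} to note that any minimizer $x_*$ satisfies $\mA_i x_*=b_i$ and hence $\nabla M^\gamma(x_*)=0$, the update collapses to the linear recursion $x_{k+1}-x_*=(\mI-\alpha\gamma\mathbf{H})(x_k-x_*)$.

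The crux is the treatment of the null space, since $\mathbf{H}$ is only positive semidefinite and the recursion does not contract in every direction. Because $(\mA_i+\frac1\gamma\mI)^{-1}$ is invertible, $\Ker\mathbf{H}_i=\Ker\mA_i$, and as these are sums of PSD matrices, $\Ker\mathbf{H}=\bigcap_i\Ker\mA_i=\Ker\mA$, where $\mA=\nabla^2 f$. I would then decompose $\R^d$ orthogonally into $\Ker\mathbf{H}$ and $\range\mathbf{H}$, both invariant under the iteration matrix. On $\Ker\mathbf{H}$ the matrix acts as the identity, so that component never moves; on $\range\mathbf{H}$ the eigenvalues of $\mathbf{H}$ lie in $[\mu_\gamma^+,L_\gamma]$, so with $\alpha=\nicefrac1{\gamma L_\gamma}$ the eigenvalues of $\mI-\alpha\gamma\mathbf{H}$ restricted to $\range\mathbf{H}$ lie in $[0,1-\nicefrac{\mu_\gamma^+}{L_\gamma}]$. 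Choosing $x_*=\Pi(x_0)$ places $x_0-x_*$ entirely in $\range\mathbf{H}$ with $\norm{x_0-x_*}=R$; since the null component of the error is then zero and stays zero, the error remains in $\range\mathbf{H}$ and contracts as $\norm{x_k-x_*}\le(1-\nicefrac{\mu_\gamma^+}{L_\gamma})^k R$.

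Finally I would convert this distance decay into a function-value guarantee. Because $f(x)-f(x_*)=\frac12(x-x_*)^\top\mA(x-x_*)$ and $\range\mA=\range\mathbf{H}$ with $\norm{\mA}=L$, only the range component contributes, giving $f(x_k)-f(x_*)\le\frac{L}{2}(1-\nicefrac{\mu_\gamma^+}{L_\gamma})^{2k}R^2$; forcing the right-hand side below $\varepsilon$ and using $\log(1-t)\le-t$ yields the advertised $\cO(\frac{L_\gamma}{\mu_\gamma^+}\log\frac1\varepsilon)$ iteration count, with the expectation vacuous in this deterministic full-participation regime. I expect the main obstacle to be making the null-space coincidence $\Ker\mathbf{H}=\Ker\mA$ fully rigorous and arguing that stationarity along $\Ker\mathbf{H}$ is harmless precisely because $f$ is flat there. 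This is exactly where the sharper $\mu_\gamma^+$ factor is unlocked: the earlier analysis ignored this spectral structure and so obtained only the sublinear rate \eqref{eq:fedexproxcompl}, whereas isolating the smallest \emph{nonzero} eigenvalue reveals the genuinely linear behaviour of the method.
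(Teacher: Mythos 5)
Your proposal is correct and follows essentially the same route as the paper: both reduce \algname{FedExProx} on quadratics to the linear recursion $x_{k+1}-x_* = (\mI-\alpha\gamma\mM)(x_k-x_*)$ with $\mM=\nabla^2 M^{\gamma}$, identify $\ker\mM=\ker\mA$ so that the error remains in $\range(\mM)$ (the paper's Lemmas~\ref{lemma:projections_SGD_q} and~\ref{lemma:lmin+eigen} handle exactly the point you flag as the main obstacle), and convert the resulting $\left(1-\nicefrac{\mu^{+}_{\gamma}}{L_{\gamma}}\right)$ contraction into a function-value guarantee via $L$--smoothness of $f$. The only cosmetic difference is that you fix the anchor $x_*=\Pi(x_0)$ and argue spectrally on the invariant subspace, whereas the paper tracks $\norm{x_k-\Pi(x_k)}$ using the projection inequality together with $\mM^2 \preceq L_{\gamma}\mM$ --- the same estimate up to a factor of two in the per-step rate, yielding the identical complexity \eqref{eq:tighter_quad}.
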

\begin{remark}
    To the best of our knowledge, under the assumptions of Theorem~\ref{thm:quad}, \algname{GD} requires 
    \begin{align}
        \label{eq:tighter_quad_gd}
        \squeeze \cO\left(\frac{L}{\mu^{+}} \log \frac{1}{\varepsilon}\right)
    \end{align}
    iterations to solve the quadratic optimization problem, where $\mu^{+}$ is the smallest non-zero eigenvalue of the matrix $\mA = \frac{1}{n} \sum_{i=1}^{n} \mA_i$ \citep{richtarik2020stochastic}.
\end{remark}
Let us now demonstrate that our new result does indeed provide a tighter bound, leading to the conclusion that \algname{FedExProx} can in fact outperform \algname{GD}.

\subsection{One step time complexity $\pi(\gamma)$ is $\ct + \tau \left(\gamma L_{\max} + 1\right)$}\label{sec:time_cond}

As introduced in Section \ref{sec:intro}, the time per one global iteration of \algname{FedExProx} has two main sources:

1. \textbf{Local computation:} In large-scale problems, each step~\eqref{eq:fedexprox} requires clients to compute $\prox_{\gamma f_i}(x_k)$ iteratively. One of the simplest solvers is \algname{GD}, which returns a solution of subproblem $i$ after $\widetilde{\cO}\left(\gamma L_i + 1\right)$ local iterations\footnote{Alternatively, an accelerated method could be employed, reducing the iteration count to $\widetilde{\cO}\left(\sqrt{\gamma L_i + 1}\right)$. Our analysis can be extended to accommodate these faster solvers as well.} (see Section \ref{sec:not_better}). If each gradient calculation
takes $\tau$ seconds, the total time required for all clients to calculate $\prox_{\gamma f_i}(x_k)$ is $\widetilde{\cO}\left(\tau \times (\gamma L_{\max} + 1)\right)$ since the process is gated by the ``slowest'' client, associated with the problem with the highest smoothness constant.

2. \textbf{Communication:} Once the local computations are completed, clients must communicate their results before the server can execute the global step \eqref{eq:fedexprox}. We assume this communication takes $\ct$ seconds, which can be huge in FL environments \citep{kairouz2021advances}.

Consequently, the total time per global iteration, $\pi(\gamma)$, is proportional to $\ct + \tau \left(\gamma L_{\max} + 1\right)$ (thus satisfying Assumption~\ref{ass:local_compl}), and the total time complexity of \algname{FedExProx} is
\begin{align}\label{eq:fedexprox_quad_time}
    \squeeze T_{\ct}(\gamma) \eqdef \widetilde{\cO} \left(\left(\ct + \tau \left(\gamma L_{\max} + 1\right)\right) \times \frac{L_{\gamma}}{\mu^{+}_{\gamma}}\right).
\end{align}
Note that the total time complexity of \algname{GD} is
\begin{align*}
    \squeeze T_{\algname{GD}} \eqdef T_{\ct}(0) = \widetilde{\cO} \left(\left(\ct + \tau\right) \times \frac{L}{\mu^{+}}\right).
\end{align*}
Our next goal is to determine the value of $\gamma$ that minimizes $T_{\ct}(\gamma)$, with the hope that this time the optimal choice does not result in $\gamma\to 0$. This is indeed the case whenever $\ct > \tau$.
\begin{restatable}{theorem}{FEDEXPROXQTIME}\label{thm:fedexprox_quad_time}
    Consider the non-strongly convex quadratic optimization problem from Theorem~\ref{thm:quad}. Up to a constant factor, the time complexity \eqref{eq:fedexprox_quad_time} is minimized by some
    \begin{align*}
        \squeeze \gamma \in \left[\frac{1}{\max\limits_{i \in [n]}\lambda_{\max}(\mA_{i})}, \min\brac{\frac{\frac{\ct}{\tau}-1}{\max\limits_{i \in [n]}\lambda_{\max}(\mA_{i})}, \frac{1}{\min\limits_{i\in[n]}\lambda_{\min}^+(\mA_i)}}\right]
    \end{align*}
    if $\frac{\ct}{\tau}\geq2$ and by some
    \begin{align*}
        \squeeze \gamma \in \left[0, \max\brac{0, \min\brac{\frac{\frac{\ct}{\tau}-1}{\max\limits_{i \in [n]}\lambda_{\max}(\mA_{i})}, \frac{1}{\min\limits_{i\in[n]}\lambda_{\min}^+(\mA_i)}}}\right]
    \end{align*}
    if $\frac{\ct}{\tau}<2$. Moreover, $T_{\ct}(\gamma) \leq T_{\algname{GD}}.$
\end{restatable}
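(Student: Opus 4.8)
My plan is to reduce everything to an explicit scalar analysis of the averaged Moreau-envelope Hessian. For quadratic $f_i$ the proximal operator is affine, $\prox_{\gamma f_i}(x)=(\mI+\gamma\mA_i)^{-1}(x+\gamma b_i)$, so $M_{f_i}^{\gamma}$ is itself quadratic with constant Hessian $\mA_i(\mI+\gamma\mA_i)^{-1}=g_\gamma(\mA_i)$, where $g_\gamma(t)\eqdef t/(1+\gamma t)$. Hence $\nabla^2 M^{\gamma}=\frac1n\sum_{i=1}^n g_\gamma(\mA_i)$, and the whole problem is to track its largest and smallest nonzero eigenvalues $L_\gamma$ and $\mu^{+}_\gamma$ as functions of $\gamma$. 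Writing $\Lambda\eqdef\max_i\lambda_{\max}(\mA_i)=L_{\max}$ and $\lambda^-\eqdef\min_i\lambda_{\min}^+(\mA_i)$, I would exploit that $g_\gamma$ is operator monotone and operator concave with $g_\gamma(0)=0$.

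The operator Jensen inequality then gives $\nabla^2 M^{\gamma}\preceq g_\gamma(\mA)$, while the elementary scalar bound $g_\gamma(t)\ge t/(1+\gamma\Lambda)$ on $[0,\Lambda]$ gives $\nabla^2 M^{\gamma}\succeq \frac{1}{1+\gamma\Lambda}\mA$. Since both bounding matrices vanish exactly on $\ker \mA$, the Hessian $\nabla^2 M^{\gamma}$ shares the range of $\mA$, and I obtain the two-sided estimates $\frac{L}{1+\gamma\Lambda}\le L_\gamma\le\frac{L}{1+\gamma L}$ and $\frac{\mu^{+}}{1+\gamma\Lambda}\le\mu^{+}_\gamma\le\frac{\mu^{+}}{1+\gamma\mu^{+}}$, where $L=\lambda_{\max}(\mA)$ and $\mu^{+}=\lambda_{\min}^+(\mA)$.

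Next I would analyze the shape of the condition number $\kappa(\gamma)\eqdef L_\gamma/\mu^{+}_\gamma$ through the saturation behaviour of $g_\gamma$. For $\gamma\le 1/\Lambda$ every eigenvalue obeys $\gamma t\le 1$, so $g_\gamma(t)\in[t/2,t]$ and $\nabla^2 M^{\gamma}$ lies within a factor $2$ of $\mA$; thus $\kappa(\gamma)\asymp L/\mu^{+}$ and, since $\tau(\gamma\Lambda+1)\asymp\tau$ there, $T_\mu(\gamma)\asymp T_{\algname{GD}}$ across this whole range. For $\gamma\ge 1/\lambda^-$ every nonzero eigenvalue obeys $\gamma t\ge 1$, so $g_\gamma(t)\in[1/(2\gamma),1/\gamma]$ and $\nabla^2 M^{\gamma}$ lies within a factor $2$ of $\frac{1}{\gamma}\cdot\frac1n\sum_i P_i$, with $P_i$ the orthogonal projector onto $\range(\mA_i)$; hence $\kappa(\gamma)$ has bottomed out at a constant floor. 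In the intermediate window $[1/\Lambda,1/\lambda^-]$ the large eigenvalues are already saturated while the small ones are not, so $\kappa(\gamma)$ is non-increasing and genuinely decreasing. Substituting into $T_\mu(\gamma)\asymp(\mu+\tau(\gamma\Lambda+1))\kappa(\gamma)$ and recalling the communication/computation crossover $\gamma_\times\eqdef(\mu/\tau-1)/\Lambda$ at which $\tau(\gamma\Lambda+1)=\mu$, I would argue: while communication dominates and the top of the spectrum is saturating ($1/\Lambda\le\gamma\le\gamma_\times$) the prefactor is $\asymp\mu$ and $\kappa$ is decreasing, so $T_\mu$ decreases; once computation dominates ($\gamma>\gamma_\times$) or the small eigenvalues saturate ($\gamma>1/\lambda^-$) the prefactor grows like $\tau\gamma\Lambda$ while $\kappa$ no longer falls fast enough, so $T_\mu$ increases. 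This places the minimizer in $[1/\Lambda,\min\{\gamma_\times,1/\lambda^-\}]$ exactly when $\gamma_\times\ge 1/\Lambda$, i.e. $\mu/\tau\ge 2$; otherwise computation overtakes communication before any saturation gain appears and the optimum is pushed into $[0,\max\{0,\min\{\gamma_\times,1/\lambda^-\}\}]$, matching the two cases. The inequality $T_\mu(\gamma)\le T_{\algname{GD}}$ is then immediate from global minimality: letting $\gamma\to 0$ recovers $T_\mu(0)=T_{\algname{GD}}$, so the minimizing $\gamma$ can only improve on it.

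The main obstacle is the intermediate window together with the non-commutativity of the $\mA_i$. Outside $[1/\Lambda,1/\lambda^-]$ the factor-$2$ saturation estimates pin $\kappa(\gamma)$ to constants, but inside it I cannot simultaneously diagonalize $\nabla^2 M^{\gamma}$ with $\mA$, so establishing that $\kappa(\gamma)$ is monotone and quantifying how fast it falls relative to the growing $\tau\gamma\Lambda$ prefactor relies on the operator-monotone and operator-concave machinery above rather than coordinatewise reasoning. Pinning the right endpoint precisely to $\gamma_\times$, and cleanly separating the regimes $\mu/\tau\ge 2$ and $\mu/\tau<2$, is then a matter of balancing these estimates.
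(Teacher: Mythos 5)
Your reduction to the Hessian $\nabla^2 M^{\gamma}=\frac{1}{n}\sum_{i=1}^n g_\gamma(\mA_i)$ with $g_\gamma(t)=t/(1+\gamma t)$, the factor-$2$ equivalence $\nabla^2 M^{\gamma}\approx\mA$ for $\gamma\le 1/\Lambda$, and the projector floor $\nabla^2 M^{\gamma}\approx\frac{1}{\gamma}\cdot\frac{1}{n}\sum_i P_i$ for $\gamma\ge 1/\lambda^-$ all coincide with the paper's proof (including the kernel identity $\ker\nabla^2 M^{\gamma}=\ker\mA$, which is the paper's Lemma~\ref{lemma:lmin+eigen}). The gap sits exactly where you flag your ``main obstacle'': for $\gamma_\times\eqdef(\nicefrac{\mu}{\tau}-1)/\Lambda<\gamma<1/\lambda^-$ you must rule out a dip of $T_\mu(\gamma)\approx\tau(1+\gamma\Lambda)\,\kappa(\gamma)$, and your argument rests on the claim that $\kappa(\gamma)=L_\gamma/\mu^+_\gamma$ is non-increasing on $[1/\Lambda,1/\lambda^-]$. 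That claim is false, already for commuting matrices: take $n=2$, $\mA_1=\textnormal{diag}(10,1)$, $\mA_2=\textnormal{diag}(\nicefrac{1}{10},1)$, so $\Lambda=10$, $\lambda^-=\nicefrac{1}{10}$ and the window is $[\nicefrac{1}{10},10]$; then $\kappa(1)=1$ exactly (both coordinate averages equal $\nicefrac{1}{2}$), while $\kappa(5)\approx 1.27$, so $\kappa$ strictly increases inside the window. Nor can your operator-Jensen sandwich $\frac{1}{1+\gamma\Lambda}\mA\preceq\nabla^2 M^{\gamma}\preceq g_\gamma(\mA)$ rescue the step: it pins $\kappa(\gamma)$ only up to the multiplicative slack $\frac{(1+\gamma\Lambda)^2}{(1+\gamma L)(1+\gamma\mu^+)}$, which is unbounded precisely in this intermediate window, so it cannot decide whether $(1+\gamma\Lambda)\kappa(\gamma)$ falls below its value at $\gamma_\times$.

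The paper closes this regime without any monotonicity claim for $\kappa$ itself: it proves that the product $\bar T_\mu(\gamma)\eqdef\frac{\lambda_{\max}(\gamma)}{\lambda_{\min}^+(\gamma)}\parens{1+\gamma\Lambda}$ is non-decreasing in $\gamma$, which is all that is needed once $\tau(1+\gamma\Lambda)\ge\mu$. The mechanism is the variational representations \eqref{eq:lmax_eig} and \eqref{eq:lmin+_eig}: the numerator equals $\max_{\norm{x}\le1}\frac{1}{n}\sum_i x^\top\mQ_i\left[\frac{(1+\gamma\Lambda)\lambda_j(\mA_i)}{1+\gamma\lambda_j(\mA_i)}\right]_{jj}\mQ_i^\top x$, whose scalar entries are non-decreasing in $\gamma$ (their derivative is proportional to $\lambda_j(\mA_i)\parens{\Lambda-\lambda_j(\mA_i)}\ge0$), while the denominator's entries $\frac{\lambda_j(\mA_i)}{1+\gamma\lambda_j(\mA_i)}$ are non-increasing, and the minimization runs over the $\gamma$-independent set $\brac{\norm{x}=1,\ x\in\ker(\mA)^{\perp}}$ thanks to Lemma~\ref{lemma:lmin+eigen}. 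A maximum (resp.\ minimum) of pointwise monotone Rayleigh quotients over a fixed feasible set is monotone, so non-commutativity never enters and no simultaneous diagonalization is needed. If you replace your window argument with this single observation, the remainder of your proposal --- approximate constancy on $[0,1/\Lambda]$, the tightened right endpoint $1/\lambda^-$, the case split at $\nicefrac{\mu}{\tau}=2$, and $T_\mu(\gamma)\le T_{\textnormal{\algname{GD}}}$ via $T_\mu(0)=T_{\textnormal{\algname{GD}}}$ --- matches the paper's proof and goes through.
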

The theorem above establishes that the total time complexity of \algname{FedExProx} is no worse than that of \algname{GD}, and can be strictly better when communication time dominates computation time, highlighting the superiority of our new analysis. Notably, this was not the case for the previous analysis by \citet{li2024power} (Theorem~\ref{thm:pessimistic}). 
The following example, where $T_{\ct}(\gamma) \ll T_{\algname{GD}}$ for some $\gamma > 0$, highlights how our complexity can significantly outperform that of \algname{GD}.
\begin{example}
    Let the matrices $\{\mA_i\}$ be diagonal, i.e., $\mA_i = \textnormal{diag}\parens{a_{i1}, \ldots, a_{id}}$ for all $i \in [n],$ and $a_{ij} > 0$ for all $i \in [n], j \in [d].$ Then, according to \eqref{eq:lmax_eig} and \eqref{eq:lmin+_eig}, we get
    \begin{align*}
        \squeeze \frac{L_{\gamma}}{\mu^{+}_{\gamma}} = \frac{\lambda_{\max}(\gamma)}{\lambda_{\min}^+(\gamma)} = \frac{\max\limits_{j \in [d]} \sum\limits_{i=1}^{n} \frac{\gamma a_{ij}}{1 + \gamma a_{ij}}}{\min\limits_{j \in [d]} \sum\limits_{i=1}^{n} \frac{\gamma a_{ij}}{1 + \gamma a_{ij}}},
    \end{align*}
    where $\lambda_{\max}(\gamma)$ and $\lambda_{\min}^+(\gamma)$ are the largest and the smallest non-zero eigenvalues of the matrix $\mM \eqdef \frac{1}{n}\sum_{i=1}^{n}\frac{1}{\gamma}(\mI - (\gamma \mA_{i} + \mI)^{-1})$, respectively (see Section \ref{sec:fedexprox_quad}).
    Taking $\gamma = 1 / \min_{i \in [n], j \in [d]}a_{ij}$ and using the fact that $1 \geq \frac{x}{1 + x} \geq \frac{1}{2}$ for all $x \geq 1,$ we can conclude that $\frac{L_{\gamma}}{\mu^{+}_{\gamma}} \leq 2.$ Thus,
    \begin{align*}
        \squeeze T_{\ct}(\gamma) = \widetilde{\cO} \left(\ct + \tau \frac{\max\limits_{i \in [n], j \in [d]}a_{ij}}{\min\limits_{i \in [n], j \in [d]}a_{ij}}\right).
    \end{align*}
    Suppose that communication is slow. Formally, let
    $\ct \geq \tau \max_{i \in [n], j \in [d]}a_{ij} / \min_{i \in [n], j \in [d]}a_{ij}.$ Then 
    $T_{\ct}(\gamma) = \widetilde{\cO} (\ct)$, while the total time complexity of \algname{GD} is 
    \begin{align*}
        \squeeze T_{\algname{GD}} = \widetilde{\Omega} \left(\ct \times \frac{L}{\mu^{+}}\right) = \widetilde{\Omega} \left(\ct \times \frac{\max\limits_{j \in [d]} \sum\limits_{i=1}^{n} a_{ij}}{\min\limits_{j \in [d]} \sum\limits_{i=1}^{n} a_{ij}}\right),
    \end{align*}
    which is at least $\max_{j \in [d]} \sum_{i=1}^{n} a_{ij} / \min_{j \in [d]} \sum_{i=1}^{n} a_{ij}$ times worse!
\end{example}
A similar improvement can be observed in the general case. However, the derivation is significantly more complex, so to maintain clarity, the example focuses on diagonal matrices only.

\section{Partial Participation}\label{sec:pp}

Thus far, we have concentrated on the full participation scenario. However, as outlined in Section~\ref{sec:lit_review}, practical FL settings often involve only a subset of clients participating in each training round. To address this, we supplement our theory with a convergence result in the stochastic setting. For illustration, we consider nice sampling (see Section~\ref{sec:nice_sampling}), where at each iteration, a subset~$\cS_k \subseteq [n]$ of clients is selected uniformly at random from all subsets of size~$S$. Although we use this sampling strategy as an example, other client selection methods can also be employed.
In this context, we can formulate the following complexity result:

\begin{theorem}
    \label{thm:quad_iter_stoch}
    Fix any $\gamma > 0$ and consider solving non-strongly convex quadratic optimization problem~\eqref{eq:main_problem}, where $f_i(x) = \frac{1}{2} x^\top \mA_i x - b_i^\top x$
    for all $i \in [n],$ with $\mA_i \in \textnormal{Sym}^{d}_{+}$ and $b_i \in \R^d$. Under Assumption \ref{ass:inter}, \algname{FedExProx} with nice sampling (Algorithm \ref{algorithm:batch_fedexprox}) with $\alpha = \nicefrac{1}{\gamma L_{\gamma, S}}$ finds $\bar{x}$ such that $\mathbb{E}[f(\bar{x})] - f(x_*) \leq \varepsilon$ after
    \begin{align}\label{eq:tighter_quad_stoch}
        \squeeze \cO\left(\frac{L_{\gamma,S}}{\mu^{+}_{\gamma}} \log \frac{1}{\varepsilon}\right)
    \end{align}
    iterations, where $L_{\gamma, S} \eqdef \frac{n-S}{S(n-1)} \frac{L_{\max}}{1+\gamma L_{\max}} + \frac{n(S-1)}{S(n-1)} L_{\gamma}$, $L_{\gamma}$ is the smoothness constant of $M^{\gamma}$ and~$\mu^{+}_{\gamma}$ is the smallest non-zero eigenvalue of the matrix $\nabla^2 M^{\gamma}.$
\end{theorem}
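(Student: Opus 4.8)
The plan is to reduce the stochastic \algname{FedExProx} step to a linear recursion and then control it through an expected-smoothness (ES) argument in the positive-semidefinite (PSD) order. First I would exploit the quadratic structure: for $f_i(x) = \frac{1}{2} x^\top \mA_i x - b_i^\top x$ the proximal operator is affine, $\prox_{\gamma f_i}(x) = (\mA_i + \gamma^{-1}\mI)^{-1}(b_i + \gamma^{-1} x)$, so the Moreau-envelope gradient is $\nabla M_{f_i}^{\gamma}(x) = \mM_i (x - x_*)$, where $\mM_i \eqdef (\gamma \mA_i + \mI)^{-1}\mA_i \succeq 0$ and Assumption~\ref{ass:inter} places the common minimizer at $x_*$ so that $\nabla M_{f_i}^{\gamma}(x_*) = 0$. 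Writing $r_k \eqdef x_k - x_*$ and $\mM_{\cS} \eqdef \frac{1}{S}\sum_{i \in \cS}\mM_i$, the step with nice sampling collapses to the linear recursion $r_{k+1} = (\mI - \alpha\gamma \mM_{\cS_k}) r_k$.

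Next I would isolate the relevant subspace. Since $\nullspace(\mM) = \bigcap_i \nullspace(\mA_i) = \nullspace(\mA)$ and $\range(\mM_i) \subseteq \range(\mM)$ for every $i$, the component of $r_k$ lying in $\nullspace(\mM)$ is frozen by the iteration, while its component $r_k^{\parallel}$ in $\range(\mM)$ evolves within that subspace, on which $\mM$ is positive definite with spectrum in $[\mu^{+}_{\gamma}, L_{\gamma}]$. It therefore suffices to prove geometric decay of $\Exp{\norm{r_k^{\parallel}}^2}$ and convert to function values via $f(x_k) - f(x_*) = \frac{1}{2}(r_k^{\parallel})^\top \mA r_k^{\parallel} \leq \frac{L}{2}\norm{r_k^{\parallel}}^2$ (the cross terms vanish because $\mA r_k^{\perp} = 0$). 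Taking the conditional expectation and using $\ExpSub{\cS}{\mM_{\cS}} = \mM$ yields $\ExpCond{\norm{r_{k+1}^{\parallel}}^2}{r_k^{\parallel}} = (r_k^{\parallel})^\top\left(\mI - 2\alpha\gamma\mM + \alpha^2\gamma^2\ExpSub{\cS}{\mM_{\cS}^2}\right) r_k^{\parallel}$, so everything hinges on a bound for the second moment $\ExpSub{\cS}{\mM_{\cS}^2}$.

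The crux — and the step I expect to be hardest — is establishing the ES inequality $\ExpSub{\cS}{\mM_{\cS}^2} \preceq L_{\gamma,S}\,\mM$ with exactly the stated constant. I would expand $\ExpSub{\cS}{\mM_{\cS}^2} = \frac{1}{S^2}\ExpSub{\cS}{\sum_{i \in \cS}\mM_i^2 + \sum_{i \neq j \in \cS}\mM_i\mM_j}$ and insert the nice-sampling marginals $\Prob{i \in \cS} = S/n$ and $\Prob{i,j \in \cS} = \frac{S(S-1)}{n(n-1)}$; rewriting $\sum_{i \neq j}\mM_i\mM_j = n^2\mM^2 - \sum_i \mM_i^2$ then collapses the expression to $\frac{n-S}{Sn(n-1)}\sum_i \mM_i^2 + \frac{n(S-1)}{S(n-1)}\mM^2$. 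I would bound the two pieces separately in the PSD order: for the second, $\mM^2 \preceq L_{\gamma}\mM$ since $\mM \preceq L_{\gamma}\mI$; for the first, the key observation is that $t \mapsto t/(1+\gamma t)$ is increasing, so $\mM_i \preceq \frac{L_{\max}}{1+\gamma L_{\max}}\mI$ and hence $\mM_i^2 \preceq \frac{L_{\max}}{1+\gamma L_{\max}}\mM_i$, giving $\frac{1}{n}\sum_i \mM_i^2 \preceq \frac{L_{\max}}{1+\gamma L_{\max}}\mM$. Combining the two bounds reproduces precisely the coefficients defining $L_{\gamma,S}$.

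Finally, substituting $\alpha = 1/(\gamma L_{\gamma,S})$ makes $\alpha\gamma(2 - \alpha\gamma L_{\gamma,S}) = 1/L_{\gamma,S}$, so that $\ExpSub{\cS}{(\mI - \alpha\gamma\mM_{\cS})^2} \preceq \mI - \frac{1}{L_{\gamma,S}}\mM \preceq \left(1 - \frac{\mu^{+}_{\gamma}}{L_{\gamma,S}}\right)\mI$ on $\range(\mM)$. Unrolling this contraction with the tower property and applying the function-value bound shows that $K = \cO\!\left(\frac{L_{\gamma,S}}{\mu^{+}_{\gamma}}\log\frac{1}{\varepsilon}\right)$ iterations suffice, with the output taken as $\bar{x} = x_K$. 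As a sanity check, setting $S = n$ gives $L_{\gamma,n} = L_{\gamma}$ and recovers Theorem~\ref{thm:quad}.
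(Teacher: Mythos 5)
Your proposal is correct and follows essentially the same route as the paper's proof (Theorem~\ref{thm:fedexprox_quad_stoch_iter} combined with $L$--smoothness and Fact~\ref{fact:hess_m}): your nice-sampling second-moment expansion re-derives Lemma~\ref{lemma:nice_m}, your bounds $\mM_i^2 \preceq \frac{L_{\max}}{1+\gamma L_{\max}}\mM_i$ and $\mM^2 \preceq L_{\gamma}\mM$ are exactly the paper's expected-smoothness step \eqref{eq:exp_mk}, and the optimal choice $\alpha\gamma = \nicefrac{1}{L_{\gamma,S}}$ with restriction to $\range(\mM)$ matches as well. The only cosmetic difference is that you work with a fixed $x_*$ and an explicit $\range(\mM)\oplus\nullspace(\mM)$ decomposition of the error (with the null component frozen), whereas the paper tracks $\norm{x_k - \Pi(x_k)}$ and invokes Lemma~\ref{lemma:projections_SGD_q}; the two are equivalent, and your version makes the role of $\mu^{+}_{\gamma} = \lambda^{+}_{\min}(\gamma)$ slightly more transparent.
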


Assuming the same time complexity model as in Section \ref{sec:time_cond}, the total time complexity of \algname{FedExProx} with nice sampling is given by
\begin{align}\label{eq:fedexprox_quad_time_stoch}
    \squeeze T_{\ct}(\gamma, S) \eqdef \widetilde{\cO} \left(\left(\ct + \tau \left(\gamma L_{\max} + 1\right)\right) \times \frac{L_{\gamma, S}}{\mu^{+}_{\gamma}}\right).
\end{align}
As it turns out, the optimal stepsize for the stochastic setting aligns with the one used in the deterministic scenario (see Theorem \ref{thm:fedexprox_quad_time}).
\begin{restatable}{theorem}{FEDEXPROXQTIMESTOCH}\label{thm:fedexprox_quad_time_stoch}
    Up to a constant factor, the time complexity~\eqref{eq:fedexprox_quad_time_stoch} is minimized by
    \begin{align*}
        \squeeze \gamma \in \left[\frac{1}{\max\limits_{i \in [n]}\lambda_{\max}(\mA_{i})}, \min\brac{\frac{\frac{\ct}{\tau}-1}{\max\limits_{i \in [n]}\lambda_{\max}(\mA_{i})}, \frac{1}{\min\limits_{i\in[n]}\lambda_{\min}^+(\mA_i)}}\right]
    \end{align*}
    if $\frac{\ct}{\tau}\geq2$ and by
    \begin{align*}
        \squeeze \gamma \in \left[0, \max\brac{0, \min\brac{\frac{\frac{\ct}{\tau}-1}{\max\limits_{i \in [n]}\lambda_{\max}(\mA_{i})}, \frac{1}{\min\limits_{i\in[n]}\lambda_{\min}^+(\mA_i)}}}\right]
    \end{align*}
    if $\frac{\ct}{\tau}<2$.
\end{restatable}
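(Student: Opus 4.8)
The plan is to reduce the stochastic statement to the already-established deterministic one (Theorem~\ref{thm:fedexprox_quad_time}) by showing that, for every fixed $\gamma>0$, the stochastic time complexity $T_{\mu}(\gamma,S)$ in \eqref{eq:fedexprox_quad_time_stoch} differs from its deterministic counterpart $T_{\mu}(\gamma)$ in \eqref{eq:fedexprox_quad_time} only by a factor independent of $\gamma$. Since the cost factor $\mu+\tau(\gamma L_{\max}+1)$ and the denominator $\mu^{+}_{\gamma}$ appear identically in both expressions, the entire $\gamma$-dependence distinguishing them is carried by the numerators $L_{\gamma,S}$ versus $L_{\gamma}$. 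Hence it suffices to prove a two-sided estimate $L_{\gamma}\le L_{\gamma,S}\le c\,L_{\gamma}$ with $c$ not depending on $\gamma$: a $\gamma$-independent multiplicative gap between the two objectives cannot move the location of the constant-factor minimizer.

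First I would record the spectral structure. Writing $\mM_i\eqdef\frac{1}{\gamma}(\mI-(\gamma\mA_i+\mI)^{-1})\succeq 0$, the eigenvalues of $\mM_i$ are $\frac{a}{1+\gamma a}$ ranging over the spectrum of $\mA_i$, so $\lambda_{\max}(\mM_i)=\frac{\lambda_{\max}(\mA_i)}{1+\gamma\lambda_{\max}(\mA_i)}\le\frac{L_{\max}}{1+\gamma L_{\max}}$ because $a\mapsto\frac{a}{1+\gamma a}$ is increasing and $\lambda_{\max}(\mA_i)\le L_{\max}$. Using $\mM=\frac{1}{n}\sum_i\mM_i$ then gives the upper estimate $L_{\gamma}=\lambda_{\max}(\mM)\le\frac{1}{n}\sum_i\lambda_{\max}(\mM_i)\le\frac{L_{\max}}{1+\gamma L_{\max}}$. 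For the matching lower estimate, pick $i^\star$ attaining $\lambda_{\max}(\mA_{i^\star})=L_{\max}$; since every $\mM_i\succeq 0$, dropping all but one term yields $\mM\succeq\frac{1}{n}\mM_{i^\star}$, whence $L_{\gamma}\ge\frac{1}{n}\lambda_{\max}(\mM_{i^\star})=\frac{1}{n}\frac{L_{\max}}{1+\gamma L_{\max}}$, i.e. $\frac{L_{\max}}{1+\gamma L_{\max}}\le n\,L_{\gamma}$. Both inequalities hold for all $\gamma>0$ with $\gamma$-independent constants.

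Finally I would assemble the sandwich. Recalling that $L_{\gamma,S}=\frac{n-S}{S(n-1)}\frac{L_{\max}}{1+\gamma L_{\max}}+\frac{n(S-1)}{S(n-1)}L_{\gamma}$ is a convex combination (the weights sum to $1$), the bound $\frac{L_{\max}}{1+\gamma L_{\max}}\ge L_{\gamma}$ immediately gives $L_{\gamma,S}\ge L_{\gamma}$, while substituting $\frac{L_{\max}}{1+\gamma L_{\max}}\le n L_{\gamma}$ and simplifying the weights via $\frac{n(S-1)}{S(n-1)}+n\frac{n-S}{S(n-1)}=\frac{n}{S}$ yields $L_{\gamma,S}\le\frac{n}{S}L_{\gamma}$. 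Consequently $T_{\mu}(\gamma)\le T_{\mu}(\gamma,S)\le\frac{n}{S}T_{\mu}(\gamma)$ for every $\gamma$. Combining with Theorem~\ref{thm:fedexprox_quad_time}, if $\gamma^\star$ lies in the stated interval and minimizes $T_{\mu}$ up to a constant $C$, then for any competitor $\gamma'$ one has $T_{\mu}(\gamma^\star,S)\le\frac{n}{S}T_{\mu}(\gamma^\star)\le\frac{n}{S}C\,T_{\mu}(\gamma')\le\frac{n}{S}C\,T_{\mu}(\gamma',S)$, so the same interval minimizes $T_{\mu}(\cdot,S)$ up to the constant $\frac{n}{S}C$. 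The main obstacle is the lower eigenvalue estimate $\frac{L_{\max}}{1+\gamma L_{\max}}\le nL_{\gamma}$, i.e. certifying that the compressed top eigenvalue of the \emph{averaged} matrix $\mM$ is not much smaller than the largest individual one, uniformly in $\gamma$; everything else is weight bookkeeping and a direct appeal to the deterministic theorem.
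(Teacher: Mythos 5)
Your sandwich inequalities are all correct: the spectrum of $\frac{1}{\gamma}\parens{\mI-(\gamma\mA_i+\mI)^{-1}}$ consists of the values $\frac{\lambda_j(\mA_i)}{1+\gamma\lambda_j(\mA_i)}$, which gives $\frac{1}{n}\frac{L_{\max}}{1+\gamma L_{\max}} \leq L_{\gamma} \leq \frac{L_{\max}}{1+\gamma L_{\max}}$; the weights in $L_{\gamma,S}$ do sum to one; and hence $T_{\mu}(\gamma) \leq T_{\mu}(\gamma,S) \leq \frac{n}{S}\,T_{\mu}(\gamma)$ for every $\gamma$. The gap is in the final step: $\nicefrac{n}{S}$ is not a constant in the sense the theorem requires --- it is a problem parameter, equal to $n$ when $S=1$, which is precisely the partial-participation regime the theorem addresses. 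A guarantee of the form ``the stated interval minimizes $T_{\mu}(\cdot,S)$ up to the factor $\frac{n}{S}C$'' can be vacuous: the entire payoff of tuning $\gamma$ is a speedup that may well be smaller than $\nicefrac{n}{S}$ (the gain over $\gamma = 0$ is governed by $\nicefrac{\mu}{\tau}$ and eigenvalue ratios), so with slack $\nicefrac{n}{S}$ your conclusion cannot even certify that the stated interval beats $\gamma=0$, i.e., beats \algname{GD} --- undermining exactly the comparison this section of the paper builds on the theorem. Worse, the loss is intrinsic to your route: the factor $n$ in $\frac{L_{\max}}{1+\gamma L_{\max}} \leq n L_{\gamma}$ is attained (take $\mA_{i^\star}$ with top eigenvector $v$ and all other $\mA_i$ annihilating $v$), so no uniform-in-$\gamma$ sandwich with an absolute constant exists, and the reduction to Theorem~\ref{thm:fedexprox_quad_time} cannot be repaired by sharpening your (otherwise correct) eigenvalue estimates.

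The paper's proof avoids any global comparison of $L_{\gamma,S}$ with $L_{\gamma}$. It splits the objective as the convex combination $\bar{T}_{\mu}(\gamma) = \frac{n-S}{S(n-1)}\,\bar{g}_1(\gamma) + \frac{n(S-1)}{S(n-1)}\,\bar{g}_2(\gamma)$, where $\bar{g}_2$ is exactly the deterministic objective handled by Theorem~\ref{thm:fedexprox_quad_time} and $\bar{g}_1(\gamma) = \frac{1}{\lambda_{\min}^{+}(\gamma)}\frac{L_{\max}}{1+\gamma L_{\max}}\parens{\mu+\tau\parens{1+\gamma L_{\max}}}$, and then shows that \emph{each} term separately is approximately constant on the stated interval and non-decreasing beyond it, with absolute factors of $2$ coming from \eqref{eq:yuveVq} and from the monotonicity of $\lambda_{\min}^{+}(\gamma)$; a convex combination of two functions each minimized on a common interval with absolute constants is itself minimized there with the same constants, which yields the theorem with factors independent of $n$ and $S$. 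If you want to repair your write-up, replace the global sandwich with this term-by-term monotonicity analysis of $\bar{g}_1$ --- your spectral computations already supply most of the ingredients it needs.
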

Consequently, the conclusions from the previous section apply equally to the partial participation scenario: \algname{FedExProx} performs at least as well as \algname{GD} and can be strictly better when communication time exceeds computation time.

\section{Adaptivity}

We now turn to adaptive extrapolation strategies: \textit{gradient diversity} (\algname{GraDS}) and a variant of the classical \textit{Polyak stepsize} (\algname{StoPS}). Both were first introduced by \citet{horvath2022adaptive} and later adapted for proximal methods by \citet{li2024power}. As in the case of constant extrapolation, we refine the analysis by \citet{li2024power} for quadratic problems.
Although Theorem~\ref{thm:quad_iter_ada} refers to the full participation setting, the approach can be extended to the stochastic case (see Section~\ref{sec:ada_stoch}).

\begin{restatable}{theorem}{FEDEXPROXQITERADA}\label{thm:quad_iter_ada}
    Fix any $\gamma > 0$ and consider solving non-strongly convex quadratic optimization problem~\eqref{eq:main_problem}, where $f_i(x) = \frac{1}{2} x^\top \mA_i x - b_i^\top x$ for all $i \in [n],$ with $\mA_i \in \textnormal{Sym}^{d}_{+}$ and $b_i \in \R^d$. Let Assumption~\ref{ass:inter} hold and consider two adaptive extrapolation strategies: 
    \begin{enumerate}
        \item (\algname{FedExProx-GraDS}) Set
            \begin{align*}
                \squeeze \alpha_k = \alpha_{k}^{\algname{GraDS}}(x_k) \eqdef \frac{\frac{1}{n}\sum_{i=1}^n \norm{\nabla M^{\gamma}_{f_i}(x_{k})}^{2}}{\norm{\frac{1}{n}\sum_{i=1}^n \nabla M^{\gamma}_{f_i}(x_{k})}^{2}}
                \geq 1.
            \end{align*}
            Then, the iterates of Algorithm \ref{algorithm:fedexprox} satisfy
            \small{\begin{eqnarray}\label{eq:grads_iter}
                \squeeze \norm{x_K-\Pi(x_K)}^{2} \leq \parens{1 - C_{\algname{G}}}^K \norm{x_0 - \Pi(x_0)}^2,
            \end{eqnarray}}
            where $C_{\algname{G}} \eqdef \min\limits_{k=0,\ldots,K-1} \alpha_{k} \gamma \frac{2 + \gamma L_{\max}}{1 + \gamma L_{\max}} \mu^{+}_{\gamma}$.
        \item (\algname{FedExProx-StoPS}) Set
            \small{\begin{align*}
                \squeeze \alpha_k = \alpha_{k}^{\algname{StoPS}}(x_k) \eqdef \frac{\frac{1}{n}\sum_{i=1}^{n} \parens{M^{\gamma}_{f_i}(x_{k})-\inf  M^{\gamma}_{f_i}}}{\gamma\norm{\frac{1}{n}\sum_{i=1}^{n} \nabla  M^{\gamma}_{f_i}(x_{k})}^{2}}
                \geq \frac{1}{2\gamma L_{\gamma}}.
            \end{align*}}
            Then, the iterates of Algorithm \ref{algorithm:fedexprox} satisfy
            \small{\begin{eqnarray}\label{eq:stops_iter}
                \squeeze \norm{x_K-\Pi(x_K)}^{2}
                &\leq& \squeeze \parens{1 - C_{\algname{S}}}^K \norm{x_0 - \Pi(x_0)}^2,
            \end{eqnarray}}
            where $C_{\algname{S}} \eqdef \frac{3}{2} \min\limits_{k=0,\ldots,K-1} \alpha_{k} \gamma \mu^{+}_{\gamma}$.
    \end{enumerate}
\end{restatable}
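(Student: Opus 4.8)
The plan is to exploit the quadratic structure to turn the Moreau-envelope dynamics into a linear recursion governed by a single fixed PSD matrix, and then dispatch the two adaptive stepsizes by algebraic substitution. First I would record the closed forms. Writing $\mathbf{H}_i \eqdef (\gamma\mA_i+\mI)^{-1}\mA_i = \frac{1}{\gamma}(\mI-(\gamma\mA_i+\mI)^{-1})$, a direct computation of the prox gives $\nabla M^{\gamma}_{f_i}(x) = \mathbf{H}_i(x-x_*)$ for any $x_* \in \cX_*$ (using $b_i = \mA_i x_*$ from Assumption~\ref{ass:inter}), and, since $\inf M^{\gamma}_{f_i} = \inf f_i = f_i(x_*)$, also $M^{\gamma}_{f_i}(x)-\inf M^{\gamma}_{f_i} = \frac12 (x-x_*)^\top\mathbf{H}_i(x-x_*)$. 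Setting $\mM \eqdef \frac1n\sum_i\mathbf{H}_i = \nabla^2 M^{\gamma}$, both adaptive rules collapse to ratios of quadratic forms in $r_k \eqdef x_k - x_*$: writing $q \eqdef r_k^\top\mM r_k$ and $p \eqdef r_k^\top\mM^2 r_k$, the StoPS rule becomes $\alpha_k = q/(2\gamma p)$, while the GraDS rule becomes $\alpha_k = A_k/p$ with $A_k \eqdef \frac1n\sum_i r_k^\top\mathbf{H}_i^2 r_k$.

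The main obstacle, and the step I would treat most carefully, is the non-strongly convex (singular $\mM$) case and the role of the projection $\Pi$. I would verify $\Ker(\mM) = \bigcap_i\Ker(\mA_i)$ (sum of PSD matrices, with $\Ker\mathbf{H}_i = \Ker\mA_i$), so that $\cX_* = x_*+\Ker(\mM)$ and $\Pi(x_k) = x_* + (\mI-P)r_k$, where $P$ is the orthogonal projector onto $\range(\mM)$. Decomposing $r_k$ accordingly and using $\mM P = P\mM = \mM$, the \algname{FedExProx} recursion $r_{k+1} = (\mI-\alpha_k\gamma\mM)r_k$ leaves the kernel component frozen and contracts only $u_k \eqdef P r_k = x_k - \Pi(x_k)$, giving the clean recursion $u_{k+1} = (\mI-\alpha_k\gamma\mM)u_k$ together with $\|x_k-\Pi(x_k)\|^2 = \|u_k\|^2$. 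Crucially, $q = u_k^\top\mM u_k$ and $p = u_k^\top\mM^2 u_k$ are unchanged by this reduction, and on $\range(\mM)$ one has $q \geq \mu^{+}_{\gamma}\|u_k\|^2$.

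With the recursion in hand, both bounds follow from the exact expansion
\begin{align*}
\squeeze \|u_{k+1}\|^2 = \|u_k\|^2 - 2\alpha_k\gamma\, q + \alpha_k^2\gamma^2\, p.
\end{align*}
For GraDS, substituting $\alpha_k p = A_k$ turns the last term into $\alpha_k\gamma^2 A_k$; I would then invoke the operator inequality $\mathbf{H}_i^2 \preceq \frac{L_{\max}}{1+\gamma L_{\max}}\mathbf{H}_i$ (valid since $\mathbf{H}_i \preceq \frac{L_{\max}}{1+\gamma L_{\max}}\mI$ by monotonicity of $t\mapsto t/(1+\gamma t)$ and $\lambda_{\max}(\mA_i)\leq L_{\max}$) to get $A_k \leq \frac{L_{\max}}{1+\gamma L_{\max}} q$. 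Combining and simplifying $2 - \gamma\frac{L_{\max}}{1+\gamma L_{\max}} = \frac{2+\gamma L_{\max}}{1+\gamma L_{\max}}$, then applying $q\geq \mu^{+}_{\gamma}\|u_k\|^2$, yields the per-step factor $1 - \alpha_k\gamma\frac{2+\gamma L_{\max}}{1+\gamma L_{\max}}\mu^{+}_{\gamma}$. For StoPS, substituting $\alpha_k = q/(2\gamma p)$ collapses the expansion exactly to $\|u_{k+1}\|^2 = \|u_k\|^2 - \frac34\, q^2/p$; since $\frac32\alpha_k\gamma = \frac{3q}{4p}$, the desired factor $1-\frac32\alpha_k\gamma\mu^{+}_{\gamma}$ is equivalent to $q\geq \mu^{+}_{\gamma}\|u_k\|^2$, which again holds.

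Finally, I would unroll each recursion over $k=0,\ldots,K-1$ and bound the product of per-step factors by its largest term, i.e.\ replace each $\alpha_k$ by $\min_{k} \alpha_k$, producing \eqref{eq:grads_iter} and \eqref{eq:stops_iter}. The stated stepsize lower bounds ($\alpha_k^{\algname{GraDS}}\geq 1$ by Jensen's inequality applied to $\|\cdot\|^2$, and $\alpha_k^{\algname{StoPS}}\geq \frac{1}{2\gamma L_{\gamma}}$ via $M^{\gamma}(x)-\inf M^{\gamma}\geq \frac{1}{2L_{\gamma}}\|\nabla M^{\gamma}(x)\|^2$ for the $L_{\gamma}$-smooth convex $M^{\gamma}$ together with $\frac1n\sum_i\inf M^{\gamma}_{f_i}\leq \inf M^{\gamma}$) are auxiliary and not needed for the contraction itself; I would include them for completeness. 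Since each derived factor is automatically in $[0,1]$ (the left-hand side is nonnegative whenever $u_k\neq 0$), no separate boundedness argument is required.
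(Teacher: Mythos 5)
Your proposal is correct and follows essentially the same route as the paper's proof: the same expansion of $\norm{x_{k+1}-\Pi(x_{k+1})}^2$, the same operator bound $\mathbf{H}_i^2 \preceq \frac{L_{\max}}{1+\gamma L_{\max}}\mathbf{H}_i$ for \algname{GraDS}, the same exact substitution collapsing the \algname{StoPS} step to the factor $1-\frac{3}{2}\alpha_k\gamma\mu^{+}_{\gamma}$, and the same use of $x_k-\Pi(x_k)\in\range(\mM)$ to invoke $\mu^{+}_{\gamma}$ (the paper cites Lemma~\ref{lemma:projections_SGD_q} where you derive the frozen-kernel decomposition $u_{k+1}=(\mI-\alpha_k\gamma\mM)u_k$ directly, a purely cosmetic difference). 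Your explicit remark that the per-step factors lie in $[0,1]$, justifying the replacement of each $\alpha_k$ by $\min_k \alpha_k$ when unrolling, is a small point the paper leaves implicit.
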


\begin{remark}
    Similar to the observations by \citet{li2024power}, we see that, unlike in the constant extrapolation case (see Theorem \ref{thm:quad}), \algname{FedExProx} with adaptive extrapolation  benefits from \textit{semi-adaptivity} to the smoothness constant. Specifically, it converges for any~$\gamma > 0$, and since $\gamma \mu^{+}_{\gamma}$ is bounded above (see \eqref{eq:lmin+_eig}), it suffices to choose a large enough~$\gamma$ to achieve the optimal performance.
\end{remark}

\begin{remark}
    Bounding $\min_{k=0,\ldots,K-1} \alpha_k$ by $\frac{1}{2\gamma L_{\gamma}}$, inequality \eqref{eq:stops_iter} implies that
    \begin{eqnarray*}
        \squeeze \norm{x_K-\Pi(x_K)}^{2}
        &\leq& \squeeze \parens{1 - \frac{3}{4} \frac{\mu^{+}_{\gamma}}{L_{\gamma}}}^K \norm{x_0 - \Pi(x_0)}^2,
    \end{eqnarray*}
    and hence Theorem \ref{thm:quad_iter_ada} guarantees convergence of \algname{FedExProx-StoPS} in $\cO\left(\nicefrac{L_{\gamma}}{\mu^{+}_{\gamma}} \times \log \nicefrac{1}{\varepsilon}\right)$
    iterations, regardless of the choice of the stepsize $\gamma$. This matches the guarantee from Theorem~\ref{thm:quad}, but without requiring prior knowledge of the optimal extrapolation parameter $\alpha = \nicefrac{1}{\gamma L_{\gamma}}$. The benefit comes with the trade-off of needing to know the minimum of the average of Moreau envelopes.
\end{remark}

\section{Better Theory with P{\L} Condition}\label{sec:pl}

The result from Section~\ref{sec:tight} can be extended to solving problem \eqref{eq:main_problem} for general functions that satisfy the Polyak-\L ojasiewicz (P\L) condition.
\begin{assumption}
\label{ass:pl_condition_moreau}
The function $M^{\gamma}$ satisfies P\L-condition, i.e., there exists $\mu^{+}_{\gamma}$ such that $$\squeeze \frac{1}{2} \norm{\nabla M^{\gamma}(x)}^2 \geq \mu^+_{\gamma} \left(M^{\gamma}(x) - M^{\gamma}(x_*)\right) \qquad \forall x \in \R^d.$$
\end{assumption}
This assumption holds with $\mu^{+}_{\gamma} \geq \nicefrac{\mu^{+}}{(4 (1 + \gamma L_{\max}))}$ if the function $f$ satisfies P\L\,condition with a constant $\mu^{+}$ (Lemma \ref{lemma:pl_vs_mu+}). However, the choice $\mu^{+}_{\gamma} = \nicefrac{\mu^{+}}{(4 (1 + \gamma L_{\max}))}$ is loose and leads to problems described in Theorem~\ref{thm:pessimistic}.
With this assumption in place, we can present the theorem.

\begin{theorem}\label{thm:fedexprox_pl_iter}
    Let Assumptions~\ref{ass:convex}, \ref{ass:local_lipschitz_constant}, \ref{ass:inter}, and \ref{ass:pl_condition_moreau} hold.
    For all $\gamma > 0,$ \algname{FedExProx} (Algorithm~\ref{algorithm:fedexprox}) with $\alpha = \nicefrac{1}{\gamma L_{\gamma}}$ finds $\bar{x}$ such that $\mathbb{E}[f(\bar{x})] - f(x_*) \leq \varepsilon$ in
    \begin{align*}
        \squeeze \cO\left(\frac{L_{\gamma}}{\mu^{+}_{\gamma}} \log \frac{1}{\varepsilon}\right)
    \end{align*}
    iterations, where $L_{\gamma}$ is a smoothness constant of $M^{\gamma}$ and $\mu^{+}_{\gamma}$ is the P{\L} constant.
\end{theorem}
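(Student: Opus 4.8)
The plan is to observe that \algname{FedExProx} with the prescribed extrapolation is exactly gradient descent on the averaged Moreau envelope $M^{\gamma}$, run the textbook smoothness-plus-P\L{} analysis to get linear decay of the Moreau-envelope gap, and then transfer this back to the original objective $f$. First I would rewrite the iteration: using the Moreau form \eqref{eq:fedexprox_moreau} with $\alpha = \nicefrac{1}{\gamma L_{\gamma}}$, the update collapses to $x_{k+1} = x_k - \frac{1}{L_{\gamma}} \nabla M^{\gamma}(x_k)$, i.e. plain \algname{GD} on $M^{\gamma}$ with stepsize $\nicefrac{1}{L_{\gamma}}$. Since $M^{\gamma}$ is $L_{\gamma}$-smooth, the descent lemma gives $M^{\gamma}(x_{k+1}) \leq M^{\gamma}(x_k) - \frac{1}{2 L_{\gamma}} \norm{\nabla M^{\gamma}(x_k)}^2$, and combining this with the P\L{} inequality of Assumption~\ref{ass:pl_condition_moreau} yields the one-step contraction $M^{\gamma}(x_{k+1}) - \inf M^{\gamma} \leq (1 - \nicefrac{\mu^{+}_{\gamma}}{L_{\gamma}})(M^{\gamma}(x_k) - \inf M^{\gamma})$, hence $M^{\gamma}(x_K) - \inf M^{\gamma} \leq (1 - \nicefrac{\mu^{+}_{\gamma}}{L_{\gamma}})^K (M^{\gamma}(x_0) - \inf M^{\gamma})$.

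Next I would pin down the limit. Under the interpolation Assumption~\ref{ass:inter} together with convexity (Assumption~\ref{ass:convex}), every $f_i$ is minimized at $x_*$, so $\prox_{\gamma f_i}(x_*) = x_*$ and $M^{\gamma}_{f_i}(x_*) = f_i(x_*)$; averaging gives $\inf M^{\gamma} = M^{\gamma}(x_*) = f(x_*)$. Moreover $\arg\min M^{\gamma} = \cX_*$: a point annihilates $\nabla M^{\gamma} = \frac{1}{n}\sum_i \frac{1}{\gamma}(\cdot - \prox_{\gamma f_i}(\cdot))$ iff each prox is a fixed point, which by convexity happens iff the point lies in $\cX_*$. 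Thus the contraction reads $M^{\gamma}(x_K) - f(x_*) \leq (1 - \nicefrac{\mu^{+}_{\gamma}}{L_{\gamma}})^K (M^{\gamma}(x_0) - f(x_*))$.

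The last and most delicate step is to translate a small Moreau-envelope gap into a small gap for $f$ itself, taking $\bar x = x_K$ (the iteration is deterministic, so the expectation is vacuous). I would proceed in two stages: (i) a quadratic-growth bound $\norm{x_K - \Pi(x_K)}^2 \leq \frac{2}{\mu^{+}_{\gamma}}(M^{\gamma}(x_K) - \inf M^{\gamma})$, obtained from the P\L{} property of $M^{\gamma}$ via the standard P\L{}$\Rightarrow$quadratic-growth implication (bounding $\norm{\nabla \sqrt{M^{\gamma} - \inf M^{\gamma}}}$ below along the gradient flow), using $\arg\min M^{\gamma} = \cX_*$; and (ii) $L$-smoothness of $f$ with $\nabla f(\Pi(x_K)) = 0$ (interpolation), which gives $f(x_K) - f(x_*) \leq \frac{L}{2}\norm{x_K - \Pi(x_K)}^2$. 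Chaining (i), (ii), and the linear decay yields $f(x_K) - f(x_*) \leq \frac{L}{\mu^{+}_{\gamma}}(1 - \nicefrac{\mu^{+}_{\gamma}}{L_{\gamma}})^K (M^{\gamma}(x_0) - f(x_*))$, so $K = \cO\left(\frac{L_{\gamma}}{\mu^{+}_{\gamma}} \log \frac{1}{\varepsilon}\right)$ iterations suffice, the extra factor $\frac{L}{\mu^{+}_{\gamma}}(M^{\gamma}(x_0) - f(x_*))$ entering only inside the logarithm.

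The main obstacle is precisely stages (i)--(ii): the raw contraction controls the averaged Moreau envelope, not $f$, and bridging the two requires the quadratic-growth consequence of P\L{} followed by the smoothness transfer. Verifying $\inf M^{\gamma} = f(x_*)$ and $\arg\min M^{\gamma} = \cX_*$ underpins this and is where interpolation and convexity are genuinely used. A tempting alternative would bound $f(\bar x) - f(x_*)$ directly through the prox points $p_i = \prox_{\gamma f_i}(x_K)$ and the identity $M^{\gamma}(x_K) = \frac{1}{n}\sum_i [f_i(p_i) + \frac{1}{2\gamma}\norm{p_i - x_K}^2]$, but the cross-terms $f_j(p_i)$ with $i \neq j$ are not controlled by $M^{\gamma}$, so the distance-based route via quadratic growth appears cleaner.
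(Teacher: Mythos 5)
Your proposal is correct, but it follows a genuinely different route from the paper's. The paper proves Theorem~\ref{thm:fedexprox_pl_iter} as the $S=n$ special case of a partial-participation result (Theorem~\ref{thm:pl}): it tracks the \emph{distance} $\norm{x_k - \Pi(x_k)}^2$ as the Lyapunov quantity, expands $\norm{x_{k+1}-x_*}^2$ around $x_*=\Pi(x_k)$, controls the cross term by the symmetrized Bregman divergence $D_{M^{\gamma}}(x_k,x_*)+D_{M^{\gamma}}(x_*,x_k)$, bounds the squared-gradient term by $L_{\gamma,S}$ times that same Bregman sum via Lemmas~\ref{lemma:nice_m} and~\ref{lemma:grad_breg}, and invokes the quadratic-growth consequence of P\L{} (Lemma~\ref{lemma:star_conv}) \emph{inside every step} to close the recursion, transferring to $f$ via $L$-smoothness at the end. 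You instead run the textbook \algname{GD}-under-P\L{} analysis on $M^{\gamma}$ itself: descent lemma plus Assumption~\ref{ass:pl_condition_moreau} yields contraction of the envelope gap $M^{\gamma}(x_k)-\inf M^{\gamma}$, and quadratic growth is applied only \emph{once}, at the end, before the same $L$-smoothness transfer. Your per-step factor $1-\nicefrac{\mu^+_{\gamma}}{L_{\gamma}}$ is even slightly better than the paper's $1-\nicefrac{\mu^+_{\gamma}}{2L_{\gamma}}$, and the extra prefactor $\nicefrac{L}{\mu^+_{\gamma}}\,(M^{\gamma}(x_0)-f(x_*))$ is harmless inside the logarithm, so you recover the same $\cO\bigl(\nicefrac{L_{\gamma}}{\mu^+_{\gamma}}\log\nicefrac{1}{\varepsilon}\bigr)$ complexity. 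What each approach buys: yours is more elementary, but it is tied to the deterministic full-participation setting --- the descent lemma does not survive client sampling, whereas the paper's distance recursion with the nice-sampling variance lemma handles Theorems~\ref{thm:fedexprox_pl_iter} and~\ref{thm:fedexprox_pl_iter_stoch} in one stroke, which is presumably why the authors chose it.

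One small precision point: your parenthetical justification that $\nabla M^{\gamma}(x)=0$ forces each prox to be a fixed point ``by convexity'' is too weak as stated --- the average of the displacements $x - \prox_{\gamma f_i}(x)$ could vanish by cancellation. The correct argument uses \emph{interpolation}: $\min M^{\gamma} = \frac{1}{n}\sum_i \min M^{\gamma}_{f_i}$ is attained at $x_*\in\cX_*$, so any minimizer of $M^{\gamma}$ must minimize every $M^{\gamma}_{f_i}$ simultaneously, whence each prox is a fixed point (this is exactly Lemma~\ref{lemma:mini_equiv_global}, which you could cite directly). Since you correctly flag elsewhere that interpolation is genuinely used at this stage, this is a matter of wording rather than a gap in the proof.
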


\begin{remark}
    In practice, solving the local problems exactly is often infeasible. Instead, we can only calculate an inexact proximal operator that approximates the desired quantity within a certain accuracy budget. To accommodate this, we extend the above analysis to the scenario where the clients can only compute updates $\prox^{\delta}_{\gamma f_i} (x)$ such that $\|\prox^{\delta}_{\gamma f_i} (x) - \prox_{\gamma f} (x)\|^2 \leq \delta$ (see Section~\ref{sec:inexact}).
\end{remark}

\begin{remark}
    \citet{li2024power} also establish a linear rate for \algname{FedExProx}. However, their result relies on the assumption that the function $f$ is strongly convex, and hence that \eqref{eq:main_problem} has a unique solution. In contrast, our theorem is more general, as it allows for the possibility of multiple solutions.
\end{remark}

A similar result can be established in the partial participation scenario.
\begin{theorem}\label{thm:fedexprox_pl_iter_stoch}
    Let Assumptions~\ref{ass:convex}, \ref{ass:local_lipschitz_constant}, \ref{ass:inter}, and \ref{ass:pl_condition_moreau} hold.
    For all $\gamma > 0,$ \algname{FedExProx} with nice sampling (Algorithm~\ref{algorithm:batch_fedexprox}) and $\alpha = \nicefrac{1}{\gamma L_{\gamma}}$ finds $\bar{x}$ such that $\mathbb{E}[f(\bar{x})] - f(x_*) \leq \varepsilon$ in
    \begin{align*}
        \squeeze \cO\left(\frac{L_{\gamma, S}}{\mu^{+}_{\gamma}} \log \frac{1}{\varepsilon}\right)
    \end{align*}
    iterations, where $L_{\gamma, S} \eqdef \parens{\frac{n-S}{S(n-1)} \frac{L_{\max}}{1+\gamma L_{\max}} + \frac{n(S-1)}{S(n-1)} L_{\gamma}}$, $L_{\gamma}$ is a smoothness constant of $M^{\gamma}$ and~$\mu^{+}_{\gamma}$ is the P{\L} constant.
\end{theorem}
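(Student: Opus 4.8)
The plan is to adapt the full-participation argument behind Theorem~\ref{thm:fedexprox_pl_iter} to the minibatch setting, the key new ingredient being the exact second-moment identity for nice sampling, which is what produces the constant $L_{\gamma,S}$. First I would rewrite the stochastic step through the Moreau reformulation \eqref{eq:fedexprox_moreau} as $x_{k+1}=x_k-\alpha\gamma\,g_k$ with $g_k\eqdef\frac1S\sum_{i\in\cS_k}\nabla M_{f_i}^{\gamma}(x_k)$. Since $\cS_k$ is drawn uniformly among subsets of size $S$, the estimator is unbiased, $\ExpCond{g_k}{x_k}=\nabla M^{\gamma}(x_k)$, so conditionally the method is \algname{SGD} on the smooth convex surrogate $M^{\gamma}$. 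As in the adaptive results \eqref{eq:grads_iter}--\eqref{eq:stops_iter}, I would use the Lyapunov function $\norm{x_k-\Pi(x_k)}^2$; expanding one step around the fixed optimum $\Pi(x_k)\in\cX_*$ and using $\norm{x_{k+1}-\Pi(x_{k+1})}\le\norm{x_{k+1}-\Pi(x_k)}$ gives $\ExpCond{\norm{x_{k+1}-\Pi(x_{k+1})}^2}{x_k}\le\norm{x_k-\Pi(x_k)}^2-2\alpha\gamma\inp{\nabla M^{\gamma}(x_k)}{x_k-\Pi(x_k)}+\alpha^2\gamma^2\ExpCond{\norm{g_k}^2}{x_k}$.

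The crux, and the step I expect to be the main obstacle, is controlling the second-moment term tightly enough that the sampling variance is \emph{absorbed} into the progress term rather than surfacing as irreducible noise. Here I would invoke the exact nice-sampling identity $\ExpCond{\norm{g_k}^2}{x_k}=\frac{n(S-1)}{S(n-1)}\norm{\nabla M^{\gamma}(x_k)}^2+\frac{n-S}{S(n-1)}\frac1n\sum_{i=1}^n\norm{\nabla M_{f_i}^{\gamma}(x_k)}^2$, whose two coefficients sum to one. Assumption~\ref{ass:inter} guarantees $\nabla M_{f_i}^{\gamma}(\Pi(x_k))=0$ for every $i$, so co-coercivity of each $\tfrac{L_i}{1+\gamma L_i}$-smooth convex Moreau envelope yields $\tfrac1n\sum_i\norm{\nabla M_{f_i}^{\gamma}(x_k)}^2\le\tfrac{L_{\max}}{1+\gamma L_{\max}}\inp{\nabla M^{\gamma}(x_k)}{x_k-\Pi(x_k)}$, and co-coercivity of $M^{\gamma}$ gives the analogous $\norm{\nabla M^{\gamma}(x_k)}^2\le L_{\gamma}\inp{\nabla M^{\gamma}(x_k)}{x_k-\Pi(x_k)}$. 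Plugging these in reproduces exactly the convex combination $\ExpCond{\norm{g_k}^2}{x_k}\le L_{\gamma,S}\inp{\nabla M^{\gamma}(x_k)}{x_k-\Pi(x_k)}$. It is essential to keep both gradient contributions as inner products against $x_k-\Pi(x_k)$ (instead of the cruder bound $2L_{\gamma,S}\,(M^{\gamma}(x_k)-M^{\gamma}(x_*))$): this is precisely what folds the partial-participation variance into the descent term and makes the interpolation assumption indispensable.

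With the stepsize of the statement, calibrated so that $\alpha\gamma L_{\gamma,S}\le1$, the second-order term is dominated, leaving $\ExpCond{\norm{x_{k+1}-\Pi(x_{k+1})}^2}{x_k}\le\norm{x_k-\Pi(x_k)}^2-\alpha\gamma\inp{\nabla M^{\gamma}(x_k)}{x_k-\Pi(x_k)}$. I would then chain convexity, $\inp{\nabla M^{\gamma}(x_k)}{x_k-\Pi(x_k)}\ge M^{\gamma}(x_k)-M^{\gamma}(x_*)$, with the quadratic-growth consequence of Assumption~\ref{ass:pl_condition_moreau} ($M^{\gamma}(x_k)-M^{\gamma}(x_*)\ge\tfrac{\mu^{+}_{\gamma}}{2}\norm{x_k-\Pi(x_k)}^2$) to obtain the contraction $\ExpCond{\norm{x_{k+1}-\Pi(x_{k+1})}^2}{x_k}\le(1-\Theta(\alpha\gamma\mu^{+}_{\gamma}))\norm{x_k-\Pi(x_k)}^2$. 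Taking total expectation and unrolling yields linear decay of $\Exp{\norm{x_K-\Pi(x_K)}^2}$ at rate $1-\Theta(\mu^{+}_{\gamma}/L_{\gamma,S})$. Finally I would pass to the objective gap through $L$-smoothness of $f$ together with $\nabla f(\Pi(x_K))=0$, giving $f(x_K)-f(x_*)\le\tfrac{L}{2}\norm{x_K-\Pi(x_K)}^2$ and hence the claimed $\cO(\tfrac{L_{\gamma,S}}{\mu^{+}_{\gamma}}\log\tfrac1\varepsilon)$ complexity. As a sanity check, $S=n$ forces $L_{\gamma,S}=L_{\gamma}$ and collapses everything to Theorem~\ref{thm:fedexprox_pl_iter}.
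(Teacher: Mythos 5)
Your proof is correct and follows essentially the same route as the paper's own argument (Theorem~\ref{thm:pl} in the appendix): the same \algname{SGD}-on-$M^{\gamma}$ reformulation, the same exact second-moment identity for nice sampling (Lemma~\ref{lemma:nice_m}), the same per-client and aggregate smoothness bounds producing $L_{\gamma,S}$, the same quadratic-growth consequence of the P{\L} condition (Lemma~\ref{lemma:star_conv}), and the same final translation via $L$-smoothness of $f$; your ``co-coercivity against $x_k-\Pi(x_k)$'' step is in fact identical to the paper's use of Lemma~\ref{lemma:grad_breg} with $\nabla M^{\gamma}(\Pi(x_k))=0$, since $D_{M^{\gamma}}(x,y)+D_{M^{\gamma}}(y,x)=\inp{\nabla M^{\gamma}(x)-\nabla M^{\gamma}(y)}{x-y}$, so the distinction you emphasize is notational rather than substantive. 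One shared gloss, not a gap on your side: like the paper, you quietly run the contraction with $\alpha\gamma \propto \nicefrac{1}{L_{\gamma,S}}$ even though the theorem statement fixes $\alpha = \nicefrac{1}{\gamma L_{\gamma}}$.
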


Note that the complexities in Theorems~\ref{thm:fedexprox_pl_iter} and~\ref{thm:fedexprox_pl_iter_stoch} are entirely analogous to those in Theorems~\ref{thm:quad} and~\ref{thm:quad_iter_stoch}, with the only distinction being the substitution of the smallest non-zero eigenvalue of $\nabla^2 M^{\gamma}$ with the P\L\,constant.

\begin{figure*}[t]
    \centering
    \includegraphics[width=0.95\textwidth]{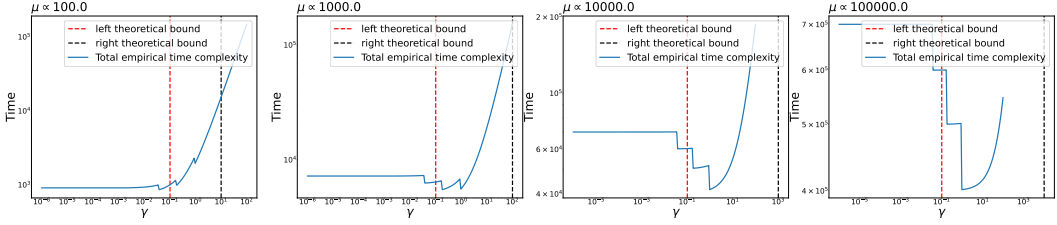}
    \caption{Empirical time complexities of \algname{FedExProx} on a quadratic optimization task.}
    \label{fig:full_quadratics}
\end{figure*}

\subsection{Why do we get a tighter analysis?}

The story behind our theoretical improvements is no less important than the improvements themselves, as it offers valuable insights and can be instructive for future research.
To understand why the new analysis yields stronger guarantees, we need to examine the reasoning behind Theorem~\ref{thm:pessimistic}.
When proving the result, we noticed that the main reason why \eqref{eq:pess_inequality} was true was the dependence of the complexity on the $L_{\max}$ factor. At the same time, the complexity of \algname{GD} depends only on $L,$ and there exist many examples when $L_{\max} \gg L.$ The question was: why does the original analysis of \algname{FedExProx} involve $L_{\max}$, and can one circumvent this dependency? The reason behind it is the reliance of the proofs by \citet{li2024power} on Lemma~\ref{lemma:m_to_f}, which establishes the inequality $M^{\gamma}(x) - M^{\gamma}(x_*) \geq \frac{1}{1+\gamma L_{\max}} \parens{f(x) - f(x_*)}$. This result appears to be essential for deriving the convergence result $\mathbb{E}[f(\bar{x})] - f(x_*) \leq \varepsilon$ (where $\bar{x} \in \R^d$ is the output of \algname{FedExProx}).
Our main idea is to instead obtain convergence in terms of \emph{distances}, i.e., to establish that $\mathbb{E}[\norm{\bar{x} - x_*}^2] \leq \varepsilon$, and then translate it to $\mathbb{E}[f(\bar{x})] - f(x_*) \leq \varepsilon$ using the $L$--smoothness of the function $f$. In this way, one can avoid using Lemma~\ref{lemma:m_to_f}, ultimately obtaining much more favourable convergence guarantees.

\section{Experimental Highlights}

This section presents highlights from illustrative experiments; additional details and results--including those on the \texttt{ARCENE} dataset \citep{arcene_167}--are provided in \Cref{sec:experiments}. We consider quadratic objectives of the form $f(x) = \frac{1}{n}\sum_{i=1}^{n}\frac{1}{2}x^{\top} \mA_{i}x,$ where $\mA_{i} \in \textnormal{Sym}^{7}_{+}$, $i\in[n]$ are random positive semidefinite matrices with minimum eigenvalue equal to zero. 
Each worker $i\in[14]$ computes proximal mappings using \algname{GD} with stepsize $\nicefrac{1}{\bar{L}_i}$.
The extrapolation parameter $\alpha$ is set to its optimal value from Theorem~\ref{thm:quad}. To validate our theory, we study the empirical time complexity of \algname{FedExProx} as a function of~$\gamma$.
In accordance with the setup described in Section \ref{sec:time_cond}, we assume that one local iteration of \algname{GD} takes $\tau$ seconds (without loss of generality, $\tau=1$). Thus, the time needed by worker $i$ to find $\prox_{\gamma f_{i}}(x_k)$ at global iteration $k$ is proportional to $\tau \times n_{ik},$ where $n_{ik}$ is the number of \algname{GD} iterations needed to find $\prox_{\gamma f_{i}}(x_{k})$ to a given accuracy. In the full participation case, the total empirical time complexity is $\sum_{k=0}^{K-1} (\ct + \tau \max_{i \in [n]} n_{ki})$, 
where $K$ is the number of global iterations needed for \algname{FedExProx} to converge to the desired accuracy and $\ct$ is the communication time.
The results are presented in Figure~\ref{fig:full_quadratics}. The dashed lines represent the theoretical bounds from Theorem~\ref{thm:fedexprox_quad_time}, within which the optimal~$\gamma$ is expected to lie. One can see that when $\ct$ is relatively small ($\ct \propto 100$), the best choice of~$\gamma$ is near $0.$ However, as the communication cost $\ct$ increases, the best~$\gamma$ shifts to values greater than $0.1$.
A distinctive U-shape emerges, indicating the nontrivial optimal choice of~$\gamma.$ These observations are fully consistent with our theoretical predictions.

\section{Conclusion}

In this work, we revisit the extrapolated parallel proximal method \algname{FedExProx}, an algorithm that has shown strong empirical performance but whose theoretical guarantees have so far lagged behind practice. Upon re-examining the state-of-the-art analysis, we find that its guarantees are overly pessimistic and that significantly stronger results are achievable.
To address this gap, we develop a novel analytical framework for non-strongly convex quadratic and P{\L} cases, yielding substantially improved convergence guarantees and a clearer picture of the method’s true performance. To put these findings in context, we compare \algname{FedExProx} with vanilla \algname{GD}. This comparison is primarily pedagogical but reveals a striking insight: previous analyses could not even show an advantage over this simplest baseline. In contrast, our work is the first to rigorously demonstrate that \algname{FedExProx} can outperform \algname{GD}, bridging a key gap between empirical observations and theoretical understanding.
Although our focus is on relatively simple quadratic problems, experiments suggest that the phenomena we describe extend well beyond this setting. These results indicate that the benefits of extrapolation in FL hold in broader optimization scenarios, motivating future research to relax our assumptions and establish similar guarantees in more general and practical settings. We view this work as a foundational stepping stone toward a deeper understanding of extrapolation in FL, paving the way for advancing both the theory and practice of federated optimization.

\newpage

\subsubsection*{Acknowledgments}

The research reported in this publication was supported by funding from King Abdullah University of Science and Technology (KAUST): i) KAUST Baseline Research Scheme, ii) CRG Grant ORFS-CRG12-2024-6460, and iii) Center of Excellence for Generative AI, under award number 5940.

\bibliography{iclr2026_conference}
\bibliographystyle{iclr2026_conference}

\newpage

\appendix
\section*{Appendix}

\tableofcontents

\newpage

\section{Background}

Before discussing the contributions of this work, we provide formal definitions and essential facts that will be repeatedly referenced in the proofs. A summary of the commonly used notation is provided in Table \ref{table:notation}.

\begin{definition}[Proximal operator]
    The \emph{proximal operator} $\prox_{\gamma f} : \R^d \rightarrow \R^d$ of $f$ is defined by $$\prox_{\gamma f} (x) = \underset{z\in\R^d}{\arg\min} \brac{f(z) + \frac{1}{2\gamma} \norm{z - x}^2},$$
    where $\norm{\cdot}$ is the standard Euclidean norm.
\end{definition}

\begin{definition}[Moreau envelope]
    The \emph{Moreau envelope} of an extended-real-valued function $f: \R^d \rightarrow \R^d \cup \{\infty\}$ with stepsize $\gamma > 0$ is the function $M^{\gamma}_{f} \,:\, \R^d \rightarrow \R^d$ such that 
    \begin{align*}
        M^{\gamma}_{f}(x) = \min_{y\in\R^d} \brac{f(y) + \frac{1}{2\gamma} \norm{y-x}^2}
    \end{align*}
    for all $x \in \R^d.$
\end{definition}

\subsection{Useful Facts about Moreau Envelope}

We will often rely on several useful properties of the Moreau envelopes, summarized below. In what follows, we denote
$$M^{\gamma}(x) \eqdef \frac{1}{n}\sum_{i=1}^n M_{f_i}^{\gamma}(x).$$

\begin{lemma}[\citet{beck2017first} Theorem 6.60]\label{lemma:grad_moreau}
    Let $f:\R^{d}\to \R\cup\{+\infty\}$ be proper, closed and convex. Then its Moreau envelope $M_{f}^{\gamma}$ satisfies
    \begin{align*}
        \nabla M_{f}^{\gamma}(x) = \frac{1}{\gamma}(x-\prox _{\gamma f}(x))
    \end{align*}
    for all $x\in\R^{d}$, for any $\gamma>0$.
\end{lemma}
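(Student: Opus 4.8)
The plan is to prove this classical identity in three steps, combining the optimality condition of the proximal subproblem with a subgradient argument and a differentiability statement. Throughout, fix $x \in \R^d$ and write $p \eqdef \prox_{\gamma f}(x)$.

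First I would establish well-posedness and extract the optimality condition. For fixed $x$, the objective $\phi_x(z) \eqdef f(z) + \frac{1}{2\gamma}\norm{z-x}^2$ is proper, closed, and $\nicefrac{1}{\gamma}$-strongly convex (the quadratic term supplies strong convexity), so it admits a unique minimizer, which is exactly $p = \prox_{\gamma f}(x)$; in particular $M_f^{\gamma}(x) = \phi_x(p)$ is finite. Writing the first-order optimality condition for this unconstrained convex subproblem gives $0 \in \partial f(p) + \frac{1}{\gamma}(p - x)$, so the candidate gradient $g \eqdef \frac{1}{\gamma}(x - p)$ satisfies $g \in \partial f(p)$.

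Next I would show that $g$ is a subgradient of $M_f^{\gamma}$ at $x$. Using the subgradient inequality $f(z) \geq f(p) + \inp{g}{z-p}$, valid for all $z$, I lower-bound $M_f^{\gamma}(y) = \min_z \brac{f(z) + \frac{1}{2\gamma}\norm{z-y}^2}$ by replacing $f(z)$ with this affine minorant and minimizing the resulting quadratic in $z$ explicitly; the minimizer is $z = y - \gamma g$. After substituting $\norm{x-p}^2 = \gamma^2 \norm{g}^2$ (since $x - p = \gamma g$), the bound collapses exactly to $M_f^{\gamma}(y) \geq M_f^{\gamma}(x) + \inp{g}{y-x}$ for every $y$, which is precisely the statement $g \in \partial M_f^{\gamma}(x)$.

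The main obstacle is upgrading this subgradient statement to genuine differentiability, so that the subgradient is unique and coincides with $\nabla M_f^{\gamma}(x)$. The cleanest route is to observe that $\prox_{\gamma f}$ is firmly nonexpansive — a consequence of the monotonicity of $\partial f$ — hence $1$-Lipschitz, so the selection $x \mapsto g(x) = \frac{1}{\gamma}(x - \prox_{\gamma f}(x))$ is continuous. Since a convex function (and $M_f^{\gamma}$ is convex, being the infimal convolution of the convex $f$ with $\frac{1}{2\gamma}\norm{\cdot}^2$) that admits a continuous subgradient selection is continuously differentiable with gradient equal to that selection, we conclude $\nabla M_f^{\gamma}(x) = g(x) = \frac{1}{\gamma}(x - \prox_{\gamma f}(x))$. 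Alternatively, differentiability can be obtained via conjugacy: writing $M_f^{\gamma}$ as the infimal convolution of $f$ and $\frac{1}{2\gamma}\norm{\cdot}^2$, its conjugate is $f^* + \frac{\gamma}{2}\norm{\cdot}^2$, which is strongly convex, and the conjugate of a strongly convex function is differentiable with Lipschitz gradient. I expect this final step to demand the most care, since the subgradient computation above is a routine (if slightly clever) quadratic minimization, whereas pinning down differentiability requires invoking either the continuous-selection principle or the conjugacy machinery in full.
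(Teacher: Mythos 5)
The paper never proves this lemma -- it is imported verbatim from \citet{beck2017first} (Theorem 6.60) -- so there is no internal proof to compare against; the only question is whether your self-contained argument is sound, and it is. All three steps check out: the $\nicefrac{1}{\gamma}$-strong convexity of $z \mapsto f(z) + \frac{1}{2\gamma}\norm{z-x}^2$ gives existence and uniqueness of $p$ together with the inclusion $g \eqdef \frac{1}{\gamma}(x-p) \in \partial f(p)$ (the subdifferential sum rule applies because the quadratic is finite and differentiable everywhere); your minorization computation is exact, since minimizing $f(p) + \inp{g}{z-p} + \frac{1}{2\gamma}\norm{z-y}^2$ over $z$ at $z = y - \gamma g$ yields $f(p) + \inp{g}{y-p} - \frac{\gamma}{2}\norm{g}^2$, which after using $x - p = \gamma g$ and $M_f^{\gamma}(x) = f(p) + \frac{\gamma}{2}\norm{g}^2$ is exactly $M_f^{\gamma}(x) + \inp{g}{y-x}$, establishing $g \in \partial M_f^{\gamma}(x)$; and you correctly identify the upgrade to genuine differentiability as the only delicate point. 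Your continuous-selection route is valid and can even be made elementary, bypassing any abstract principle: firm nonexpansiveness of $\prox_{\gamma f}$ follows from monotonicity of $\partial f$ applied to the optimality conditions at two points, and then the two subgradient inequalities for $M_f^{\gamma}$ give $0 \leq M_f^{\gamma}(y) - M_f^{\gamma}(x) - \inp{g(x)}{y-x} \leq \inp{g(y)-g(x)}{y-x} \leq \norm{g(y)-g(x)}\norm{y-x} = o(\norm{y-x})$ by continuity of $g$, which is differentiability at $x$ with gradient $g(x)$. For what it is worth, the proof in the cited source is essentially your stated alternative: Beck writes $M_f^{\gamma}(x) = \frac{1}{2\gamma}\norm{x}^2 - \frac{1}{\gamma}\parens{\gamma f + \frac{1}{2}\norm{\cdot}^2}^{*}(x)$ and differentiates the conjugate of a strongly convex function, which additionally delivers the $\frac{1}{\gamma}$-Lipschitzness of $\nabla M_f^{\gamma}$ used elsewhere (Lemma~\ref{lemma:moreau_smooth}); your direct route proves only the gradient identity, which is all the lemma claims.
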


\begin{lemma}[\citep{li2024power} Lemma $4$]\label{lemma:moreau_smooth}
    Let $f:\R^{d}\to\R$ be convex and $L$--smooth. Then $M_{f}^{\gamma}$ is $\frac{L}{1+\gamma L}$--smooth.
\end{lemma}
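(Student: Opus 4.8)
The plan is to work directly with the gradient of the Moreau envelope, which Lemma~\ref{lemma:grad_moreau} makes available to us in closed form. Fix $x, y \in \R^d$ and write $p \eqdef \prox_{\gamma f}(x)$ and $q \eqdef \prox_{\gamma f}(y)$. The first-order optimality condition for the minimization defining the proximal operator gives $\frac{1}{\gamma}(x - p) = \nabla f(p)$ and $\frac{1}{\gamma}(y - q) = \nabla f(q)$, where I use that $f$ is differentiable (being $L$--smooth). Combined with Lemma~\ref{lemma:grad_moreau}, this yields the clean identity $\nabla M_f^{\gamma}(x) = \nabla f(p)$ and $\nabla M_f^{\gamma}(y) = \nabla f(q)$. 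Hence proving smoothness of $M_f^{\gamma}$ reduces to bounding $\norm{\nabla f(p) - \nabla f(q)}$ in terms of $\norm{x - y}$.

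To this end I would introduce the shorthand $u \eqdef p - q$ and $v \eqdef \nabla f(p) - \nabla f(q)$. The two optimality conditions above give $x = p + \gamma \nabla f(p)$ and $y = q + \gamma \nabla f(q)$, so subtracting them produces the key relation $x - y = u + \gamma v$. The goal is thus to show $\norm{v} \leq \frac{L}{1 + \gamma L}\norm{u + \gamma v}$. The main tool is the co-coercivity of $\nabla f$, which follows from convexity together with $L$--smoothness: $\inp{u}{v} \geq \frac{1}{L}\norm{v}^2$.

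The final step is a short computation. Expanding,
\begin{align*}
    \squeeze \norm{x - y}^2 = \norm{u + \gamma v}^2 = \norm{u}^2 + 2\gamma \inp{u}{v} + \gamma^2 \norm{v}^2,
\end{align*}
I would lower-bound each term using co-coercivity: directly $\inp{u}{v} \geq \frac{1}{L}\norm{v}^2$, and, via Cauchy--Schwarz, $\norm{u}\norm{v} \geq \inp{u}{v} \geq \frac{1}{L}\norm{v}^2$, so that $\norm{u}^2 \geq \frac{1}{L^2}\norm{v}^2$. Substituting assembles a perfect square,
\begin{align*}
    \squeeze \norm{x - y}^2 \geq \parens{\frac{1}{L^2} + \frac{2\gamma}{L} + \gamma^2}\norm{v}^2 = \parens{\frac{1 + \gamma L}{L}}^2 \norm{v}^2,
\end{align*}
and taking square roots delivers exactly $\norm{\nabla M_f^{\gamma}(x) - \nabla M_f^{\gamma}(y)} = \norm{v} \leq \frac{L}{1 + \gamma L}\norm{x - y}$.

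The step I expect to be the most delicate is recognizing that the three lower bounds combine into the exact square $\parens{\frac{1}{L} + \gamma}^2$; this factorization is what makes the constant $\frac{L}{1 + \gamma L}$ \emph{tight} rather than a loose overestimate. Everything else is standard, since the only ingredients invoked are Lemma~\ref{lemma:grad_moreau}, the proximal optimality condition, and the co-coercivity of gradients of $L$--smooth convex functions.
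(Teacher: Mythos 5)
Your proof is correct, and it is worth noting that the paper itself offers no proof of this statement at all: Lemma~\ref{lemma:moreau_smooth} is imported verbatim by citation from \citet{li2024power} (their Lemma~4), so there is no in-text argument to compare against. Your primal argument is sound in every step: the optimality condition $\nabla f(p) + \frac{1}{\gamma}(p - x) = 0$ combined with Lemma~\ref{lemma:grad_moreau} indeed gives $\nabla M_f^{\gamma}(x) = \nabla f(p)$, the relation $x - y = u + \gamma v$ is exact, and co-coercivity $\inp{u}{v} \geq \frac{1}{L}\norm{v}^2$ (valid for convex $L$--smooth $f$, by Baillon--Haddad/Nesterov) yields both $\norm{u}^2 \geq \frac{1}{L^2}\norm{v}^2$ and the cross-term bound, assembling the perfect square $\parens{\frac{1}{L}+\gamma}^2 \norm{v}^2 \leq \norm{x-y}^2$. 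The only cosmetic point is that deducing $\norm{u} \geq \frac{1}{L}\norm{v}$ from $\norm{u}\norm{v} \geq \frac{1}{L}\norm{v}^2$ requires $v \neq 0$; the case $v = 0$ is trivial and deserves a half-sentence. For comparison, the standard route in the literature (and the one typically used to prove this constant) goes through conjugate duality: $M_f^{\gamma}$ is the infimal convolution of $f$ with $\frac{1}{2\gamma}\norm{\cdot}^2$, so $(M_f^{\gamma})^* = f^* + \frac{\gamma}{2}\norm{\cdot}^2$, which is $\parens{\frac{1}{L}+\gamma}$--strongly convex since $f^*$ is $\frac{1}{L}$--strongly convex, and smoothness of $M_f^{\gamma}$ with constant $\frac{L}{1+\gamma L}$ follows by conjugacy. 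Your approach buys a fully elementary, self-contained derivation that also makes visible \emph{why} the constant is tight (the perfect-square factorization), whereas the duality argument is shorter but leans on the strong-convexity/smoothness conjugacy machinery.
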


\begin{lemma}[\citep{li2024power} Lemma $5$]\label{lemma:mini_equiv_local}
    Let $f:\R^d\mapsto\R\cup\{+\infty\}$ be a proper, closed and convex function. Then, for all $\gamma>0$, $f$ and $M_f^{\gamma}(x)$ have the same set of minimizers and minimum.
\end{lemma}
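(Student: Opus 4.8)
The plan is to prove the two claims---equality of minimal values and equality of minimizer sets---separately, relying only on the definition of the Moreau envelope and the strong convexity of its inner objective.

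First I would establish the equality of minima by a two-sided bound. Taking $y = x$ in the defining minimization shows $M_f^{\gamma}(x) \leq f(x)$ for every $x \in \R^d$, hence $\inf_x M_f^{\gamma}(x) \leq \inf_x f(x)$. Conversely, since $\frac{1}{2\gamma}\norm{y-x}^2 \geq 0$, every value $f(y) + \frac{1}{2\gamma}\norm{y-x}^2$ appearing in the minimization is at least $\inf_z f(z)$, so $M_f^{\gamma}(x) \geq \inf_z f(z)$ for all $x$, and therefore $\inf_x M_f^{\gamma}(x) \geq \inf_z f(z)$. Combining the two bounds yields $\inf_x M_f^{\gamma}(x) = \inf_x f(x)$.

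For the minimizers I would argue both inclusions. If $x_*$ minimizes $f$, then $M_f^{\gamma}(x_*) \leq f(x_*) = \inf f = \inf M_f^{\gamma}$, and since $M_f^{\gamma}(x_*) \geq \inf M_f^{\gamma}$ always holds, $x_*$ minimizes $M_f^{\gamma}$. For the reverse inclusion I would use that the inner objective $z \mapsto f(z) + \frac{1}{2\gamma}\norm{z-x}^2$ is $\frac{1}{\gamma}$--strongly convex and---because $f$ is proper, closed and convex---attains its minimum at the unique point $\prox_{\gamma f}(x)$. Suppose $x_*$ minimizes $M_f^{\gamma}$, so that $f(\prox_{\gamma f}(x_*)) + \frac{1}{2\gamma}\norm{\prox_{\gamma f}(x_*) - x_*}^2 = M_f^{\gamma}(x_*) = \inf f$. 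Since $f(\prox_{\gamma f}(x_*)) \geq \inf f$ and the quadratic term is nonnegative, both must vanish; in particular $\prox_{\gamma f}(x_*) = x_*$ and $f(x_*) = \inf f$, so $x_*$ minimizes $f$.

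The main obstacle---really the only subtle point---is justifying that the inner minimization is attained at a unique point, which underpins the reverse inclusion; this is exactly where properness, closedness and convexity of $f$ enter, guaranteeing existence and uniqueness of $\prox_{\gamma f}(x)$. An equivalent route avoiding the penalty bookkeeping would invoke Lemma~\ref{lemma:grad_moreau}: since $M_f^{\gamma}$ is convex and differentiable, $x_*$ minimizes it iff $\nabla M_f^{\gamma}(x_*) = \frac{1}{\gamma}(x_* - \prox_{\gamma f}(x_*)) = 0$, i.e. iff $x_*$ is a fixed point of $\prox_{\gamma f}$, which by first-order optimality of the prox subproblem is equivalent to $0 \in \partial f(x_*)$, i.e. $x_*$ minimizes $f$.
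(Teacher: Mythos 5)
Your proof is correct. One thing worth noting up front: the paper itself does not prove this lemma --- it imports it verbatim as Lemma~5 of \citet{li2024power} --- so there is no in-paper argument to compare against; judged on its own, your write-up is a complete and standard proof. The sandwich bound $\inf f \leq M_f^{\gamma}(x) \leq f(x)$ correctly gives equality of infima, the forward inclusion of minimizer sets is immediate from it, and your reverse inclusion via the decomposition $f(\prox_{\gamma f}(x_*)) + \frac{1}{2\gamma}\norm{\prox_{\gamma f}(x_*) - x_*}^2 = \inf f$ is sound: equality of infima (established first) guarantees the common value is finite whenever a minimizer of $M_f^{\gamma}$ exists, so the two nonnegative excesses must both vanish, forcing $\prox_{\gamma f}(x_*) = x_*$ and $f(x_*) = \inf f$. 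Two minor remarks. First, you flag existence \emph{and uniqueness} of $\prox_{\gamma f}(x)$ as the crux, but only attainment of the inner minimum is actually used in your argument; uniqueness (from $\frac{1}{\gamma}$--strong convexity) is incidental, while properness and closedness are what deliver attainment. Second, your alternative route --- $M_f^{\gamma}$ is convex and differentiable with $\nabla M_f^{\gamma}(x) = \frac{1}{\gamma}(x - \prox_{\gamma f}(x))$ (Lemma~\ref{lemma:grad_moreau}), so minimizers of $M_f^{\gamma}$ are exactly the fixed points of $\prox_{\gamma f}$, which by first-order optimality of the prox subproblem are exactly the points with $0 \in \partial f(x_*)$ --- is the argument one typically finds in the literature for this fact, and it has the advantage of handling the minimizer-set equivalence in one chain; note, though, that it establishes equality of minimizer sets only, so the equality of minimal values still needs your first step (or the observation that $M_f^{\gamma}(x_*) = f(x_*)$ at any fixed point). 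Either route is acceptable; your primary one is self-contained and fully rigorous.
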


\begin{lemma}[\citep{beck2017first} Theorem $6.55$]\label{lemma:moreau_convex}
    Let $f:\R^d\mapsto\R\cup\{+\infty\}$ be a proper, closed and convex function. Then $M_{f}^{\gamma}$ is also a convex function.
\end{lemma}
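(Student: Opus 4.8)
The plan is to exhibit $M_f^{\gamma}$ as the partial minimization over one variable of a function that is \emph{jointly} convex in both variables, and then invoke the classical fact that such partial minimization preserves convexity. Concretely, I would define $\Phi \,:\, \R^d \times \R^d \to \R \cup \{+\infty\}$ by $\Phi(x, y) \eqdef f(y) + \frac{1}{2\gamma}\norm{y - x}^2$, so that by definition $M_f^{\gamma}(x) = \inf_{y \in \R^d} \Phi(x, y)$. Since $f$ is proper, closed, and convex and the quadratic term is strongly convex and coercive in $y$, for each fixed $x$ the inner objective is proper, closed, strongly convex, and coercive, so the infimum is attained at a unique point; this guarantees in particular that $M_f^{\gamma}$ is finite-valued and never equal to $-\infty$.

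The first key step is to verify that $\Phi$ is jointly convex on $\R^d \times \R^d$. This follows by splitting $\Phi$ into two convex summands: the map $(x, y) \mapsto f(y)$ is convex because $f$ is convex and the expression is constant in $x$, while the coupling term $(x, y) \mapsto \frac{1}{2\gamma}\norm{y - x}^2$ is convex because it is the composition of the convex function $\frac{1}{2\gamma}\norm{\cdot}^2$ with the linear map $(x, y) \mapsto y - x$. The second key step is the partial-minimization argument itself. Fixing $x_1, x_2 \in \R^d$ and $\lambda \in [0, 1]$, and letting $y_1, y_2$ be the minimizers achieving $M_f^{\gamma}(x_1)$ and $M_f^{\gamma}(x_2)$, I would use $\lambda y_1 + (1-\lambda) y_2$ as a feasible (suboptimal) candidate in the problem defining $M_f^{\gamma}(\lambda x_1 + (1-\lambda) x_2)$ and invoke joint convexity to chain
\begin{align*}
    M_f^{\gamma}(\lambda x_1 + (1-\lambda) x_2)
    &\leq \Phi\parens{\lambda x_1 + (1-\lambda) x_2, \, \lambda y_1 + (1-\lambda) y_2} \\
    &\leq \lambda \Phi(x_1, y_1) + (1-\lambda) \Phi(x_2, y_2) \\
    &= \lambda M_f^{\gamma}(x_1) + (1-\lambda) M_f^{\gamma}(x_2),
\end{align*}
which is precisely the convexity inequality for $M_f^{\gamma}$.

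The only genuinely delicate point --- and the part I would be most careful about --- is the bookkeeping around finiteness and attainment. If these are not controlled, the value $-\infty$ could appear and the displayed inequality chain would become ill-posed; this is exactly where the properness and closedness of $f$ are needed, together with the coercivity contributed by the quadratic regularizer, to ensure the inner minimum exists and $M_f^{\gamma}$ is proper. If one prefers to sidestep attainment altogether, the same inequality goes through verbatim using $\varepsilon$-minimizers $y_i$ satisfying $\Phi(x_i, y_i) \leq M_f^{\gamma}(x_i) + \varepsilon$ and letting $\varepsilon \to 0$. An equivalent high-level framing, which I would also mention, is that $M_f^{\gamma}$ is the \emph{infimal convolution} of $f$ with $\frac{1}{2\gamma}\norm{\cdot}^2$, and the infimal convolution of two proper convex functions is convex; the partial-minimization proof above is simply the elementary unpacking of that statement.
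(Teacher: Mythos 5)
Your proof is correct, and it takes essentially the same route as the source the paper cites for this lemma: the paper itself gives no proof, deferring to \citet{beck2017first} (Theorem 6.55), whose argument is exactly the partial-minimization/infimal-convolution reasoning you unpack (joint convexity of $(x,y) \mapsto f(y) + \frac{1}{2\gamma}\norm{y-x}^2$, then convexity under partial minimization). Your attention to properness, closedness, and coercivity --- together with the $\varepsilon$-minimizer fallback that sidesteps attainment entirely --- correctly handles the only delicate point (ruling out the value $-\infty$), so nothing is missing.
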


\begin{lemma}[\citep{li2024power} Lemma $7$]\label{lemma:m_gamma_convex_smooth}
    Let the functions $f_i:\R^d\mapsto\R\cup\{+\infty\}$, $i\in[n]$, be proper, closed, convex and $L_i$--smooth. Then $M^{\gamma}$ is convex and $L_{\gamma}$--smooth with
    \begin{align*}
        \frac{1}{n^2} \sum_{i=1}^n \frac{L_i}{1+\gamma L_i}\leq L_{\gamma} \leq \frac{1}{n} \sum_{i=1}^n \frac{L_i}{1+\gamma L_i}.
    \end{align*}
\end{lemma}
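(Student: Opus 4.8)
The plan is to prove the three assertions (convexity, the upper bound, the lower bound) separately, dispatching the first two quickly and concentrating effort on the lower bound, which is the only nontrivial part.

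\textbf{Convexity and upper bound.} Since each $f_i$ is proper, closed and convex, Lemma~\ref{lemma:moreau_convex} makes each $M_{f_i}^{\gamma}$ convex, and hence so is their average $M^{\gamma} = \frac{1}{n}\sum_{i=1}^n M_{f_i}^{\gamma}$. For the upper bound, Lemma~\ref{lemma:moreau_smooth} gives that $M_{f_i}^{\gamma}$ is $\frac{L_i}{1+\gamma L_i}$--smooth. Writing $\nabla M^{\gamma}(x) = \frac{1}{n}\sum_{i=1}^n \nabla M_{f_i}^{\gamma}(x)$ and applying the triangle inequality yields $\norm{\nabla M^{\gamma}(x) - \nabla M^{\gamma}(y)} \leq \frac{1}{n}\sum_{i=1}^n \norm{\nabla M_{f_i}^{\gamma}(x) - \nabla M_{f_i}^{\gamma}(y)} \leq \parens{\frac{1}{n}\sum_{i=1}^n \frac{L_i}{1+\gamma L_i}}\norm{x-y}$, i.e. $L_{\gamma} \leq \frac{1}{n}\sum_{i=1}^n \frac{L_i}{1+\gamma L_i}$.

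\textbf{Lower bound.} Here convexity is used crucially. The key sub-claim is that the exact smoothness constant of a sum of convex $C^{1,1}$ functions is at least the smoothness constant of any single summand. Applied to $h \eqdef \sum_{i=1}^n M_{f_i}^{\gamma} = n M^{\gamma}$, whose gradient is $nL_\gamma$--Lipschitz, this gives $nL_{\gamma} \geq (\text{smoothness of } M_{f_j}^{\gamma})$ for every $j\in[n]$. I then need the \emph{matching} lower bound on the Moreau envelope's smoothness, namely that $M_{f_j}^{\gamma}$ is exactly $\frac{L_j}{1+\gamma L_j}$--smooth when $L_j$ is the exact smoothness of $f_j$ (Lemma~\ref{lemma:moreau_smooth} supplies only ``$\leq$''). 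Combining, $nL_{\gamma} \geq \max_{j\in[n]} \frac{L_j}{1+\gamma L_j} \geq \frac{1}{n}\sum_{j=1}^n \frac{L_j}{1+\gamma L_j}$, and dividing by $n$ delivers $L_{\gamma} \geq \frac{1}{n^2}\sum_{j=1}^n \frac{L_j}{1+\gamma L_j}$.

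To justify the sub-claim I would use that, for convex $C^{1,1}$ functions $g$, the smoothness constant equals $\sup_{x\neq y}\frac{\inp{\nabla g(x)-\nabla g(y)}{x-y}}{\norm{x-y}^2}$ (``$\leq$'' is Cauchy--Schwarz; ``$\geq$'' follows from co-coercivity evaluated on near-extremal pairs). For $h=\sum_i g_i$ with each $g_i$ convex, monotonicity of $\nabla g_i$ gives $\inp{\nabla h(x)-\nabla h(y)}{x-y} \geq \inp{\nabla g_j(x)-\nabla g_j(y)}{x-y}$, since the remaining terms are nonnegative; bounding the left side by $L_h\norm{x-y}^2$ and passing to the supremum over near-extremal pairs for $g_j$ gives $L_h \geq L_{g_j}$ (equivalently, in the twice-differentiable case one drops the positive semidefinite terms, $\nabla^2 h \succeq \nabla^2 g_j$). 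The matching Moreau estimate can be read off the identity $\nabla M_{f_j}^{\gamma}(x) = \nabla f_j(\prox_{\gamma f_j}(x))$ together with $x-\prox_{\gamma f_j}(x) = \gamma\nabla f_j(\prox_{\gamma f_j}(x))$: along prox-point pairs that nearly realize the curvature $L_j$ of $f_j$, the input separation inflates by exactly $(1+\gamma L_j)$, producing the ratio $\frac{L_j}{1+\gamma L_j}$.

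\textbf{Main obstacle.} The hard part will be the lower bound, and within it the matching smoothness estimate for the Moreau envelope: Lemma~\ref{lemma:moreau_smooth} only gives an upper inequality, so showing that $\frac{L_j}{1+\gamma L_j}$ is actually \emph{attained} is what makes the lower bound factor through the individual summands. I would also take care to read $L_j$ and $L_\gamma$ as \emph{exact} smoothness constants throughout, since overestimating any $L_j$ would invalidate the lower inequality.
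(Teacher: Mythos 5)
Your proposal is correct, and a preliminary remark is in order: the paper does not prove this statement at all --- it imports it verbatim as Lemma 7 of \citet{li2024power} --- so there is no in-paper argument to compare against, and your proof has to stand on its own. It does. The easy parts are fine: convexity follows from Lemma~\ref{lemma:moreau_convex} and averaging, and the upper bound from the triangle inequality applied to $\nabla M^{\gamma} = \frac{1}{n}\sum_{i=1}^n \nabla M^{\gamma}_{f_i}$ together with Lemma~\ref{lemma:moreau_smooth}. Both sub-claims you isolate for the lower bound also check out. For the first, the characterization of the exact smoothness constant of a convex $C^{1,1}$ function as $\sup_{x\neq y} \inp{\nabla g(x)-\nabla g(y)}{x-y}/\norm{x-y}^2$ is the standard equivalence from \citet{nesterov2018lectures}, and dropping the nonnegative monotone cross-terms indeed yields $nL_{\gamma} \geq$ (exact smoothness of $M^{\gamma}_{f_j}$) for every $j$; combined with $\max_j \geq \frac{1}{n}\sum_j$, this produces the $\nicefrac{1}{n^2}$ factor. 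For the second (attainment of $\frac{L_j}{1+\gamma L_j}$), your sketch fills in cleanly: given $\epsilon>0$, pick $u,v$ with $\norm{\nabla f_j(u)-\nabla f_j(v)} \geq (L_j-\epsilon)\norm{u-v}$ and set $x = u + \gamma \nabla f_j(u)$, $y = v + \gamma \nabla f_j(v)$; the prox optimality condition gives $\prox_{\gamma f_j}(x)=u$ and $\prox_{\gamma f_j}(y)=v$, hence $\norm{\nabla M^{\gamma}_{f_j}(x)-\nabla M^{\gamma}_{f_j}(y)} = \norm{\nabla f_j(u)-\nabla f_j(v)}$ while $\norm{x-y} \leq \parens{\frac{1}{L_j-\epsilon}+\gamma}\norm{\nabla f_j(u)-\nabla f_j(v)}$, so the Lipschitz ratio is at least $\frac{L_j-\epsilon}{1+\gamma(L_j-\epsilon)}$, and letting $\epsilon \to 0$ matches the upper bound of Lemma~\ref{lemma:moreau_smooth}.

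Your closing caveat is exactly the right one and deserves emphasis: if the $L_i$ are mere upper bounds rather than exact smoothness constants, the lower inequality is simply false --- take $f_i \equiv 0$, which is $L_i$--smooth for any $L_i>0$, so that $M^{\gamma} \equiv 0$ and the exact $L_{\gamma}=0$, while $\frac{1}{n^2}\sum_{i=1}^n \frac{L_i}{1+\gamma L_i} > 0$. So the lemma must be read with tight constants throughout, as you do; with that reading, your argument is a complete and correct proof of the imported result.
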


\begin{lemma}[\citep{li2024power} Lemma $8$]\label{lemma:mini_equiv_global}
    Let the functions $f_i:\R^d\mapsto\R\cup\{+\infty\}$, $i\in[n]$, be proper, closed and convex, and suppose that Assumption \ref{ass:inter} holds. Then, for all $\gamma\in(0,\infty)$, the function $f(x)=\frac{1}{n}\sum_{i=1}^n f_i(x)$ has the same set of minimizers and minimum as the function $M^{\gamma}(x)$.
\end{lemma}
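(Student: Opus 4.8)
The plan is to derive both claims---equality of minimizer sets and equality of minimum values---by leveraging the interpolation Assumption~\ref{ass:inter} to reduce the statement to the single-client result already available in Lemma~\ref{lemma:mini_equiv_local}. The conceptual core is a simple but essential observation about averages of convex functions that happen to share a common minimizer, which I would isolate first: if convex functions $g_1,\dots,g_n$ admit a common minimizer, then $\frac{1}{n}\sum_{i=1}^n g_i$ attains minimum value $\frac{1}{n}\sum_{i=1}^n \min g_i$, and its set of minimizers equals $\bigcap_{i=1}^n \arg\min g_i$. The justification is a pointwise lower bound: for every $x$ one has $\frac{1}{n}\sum_i g_i(x) \geq \frac{1}{n}\sum_i \min g_i$, the existence of a common minimizer guarantees this bound is attained, and equality at a point $x$ forces $g_i(x) = \min g_i$ for every $i$ simultaneously, i.e. $x$ minimizes each $g_i$.

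Next I would apply this auxiliary fact twice. Applied with $g_i = f_i$, Assumption~\ref{ass:inter} supplies a common minimizer $x_* \in \cX_*$ of all the $f_i$ (since each $f_i$ is convex, $\nabla f_i(x_*) = 0$ gives $x_* \in \arg\min f_i$), so that $\arg\min f = \bigcap_{i=1}^n \arg\min f_i = \cX_*$ and $\min f = \frac{1}{n}\sum_{i=1}^n \min f_i$. I would then invoke Lemma~\ref{lemma:mini_equiv_local} on each client to transfer this to the Moreau envelopes, obtaining $\arg\min M_{f_i}^{\gamma} = \arg\min f_i$ and $\min M_{f_i}^{\gamma} = \min f_i$ for every $i$ and every $\gamma > 0$. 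In particular $x_*$ is a common minimizer of the convex functions $M_{f_i}^{\gamma}$ (convexity by Lemma~\ref{lemma:moreau_convex}), so the auxiliary fact applies a second time with $g_i = M_{f_i}^{\gamma}$, yielding $\arg\min M^{\gamma} = \bigcap_{i=1}^n \arg\min M_{f_i}^{\gamma}$ and $\min M^{\gamma} = \frac{1}{n}\sum_{i=1}^n \min M_{f_i}^{\gamma}$.

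Finally I would chain the equalities: $\arg\min M^{\gamma} = \bigcap_i \arg\min M_{f_i}^{\gamma} = \bigcap_i \arg\min f_i = \arg\min f$, and $\min M^{\gamma} = \frac{1}{n}\sum_i \min M_{f_i}^{\gamma} = \frac{1}{n}\sum_i \min f_i = \min f$, which is exactly the assertion. The single point deserving care---and the only place the interpolation hypothesis is genuinely used---is the auxiliary fact: for a general average of convex functions that do not share a common minimizer, the minimizer set of the average is typically strictly larger than the intersection of the individual minimizer sets (which may even be empty), so the equality $\arg\min M^{\gamma} = \arg\min f$ would fail. Verifying that Assumption~\ref{ass:inter} is precisely what makes both applications of the fact valid is the main, though light, obstacle.
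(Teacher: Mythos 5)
Your proof is correct, and it follows essentially the same route as the source: the paper itself does not reprove this lemma (it imports it as Lemma~8 of \citet{li2024power}), and the standard argument there is exactly your reduction---interpolation gives a common minimizer of all $f_i$, Lemma~\ref{lemma:mini_equiv_local} transfers minimizers and minima to each $M_{f_i}^{\gamma}$, and the elementary averaging fact (that for functions sharing a common minimizer, $\arg\min$ of the average is the intersection of the individual $\arg\min$ sets and the minimum is the average of the minima) completes the chain. Your isolation of that averaging fact, with the observation that it is where Assumption~\ref{ass:inter} is genuinely used, is exactly the right emphasis.
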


\begin{lemma}\label{lemma:m_to_f}[\citet{li2024power} Lemma 10]
    Let Assumptions \ref{ass:convex}, \ref{ass:local_lipschitz_constant} and \ref{ass:inter} hold. Then, for any $x_*\in\cX_*$ and for all $x\in\R^d$, it holds that
    \begin{align*}
        M^{\gamma}_{f_i}(x) - M^{\gamma}_{f_i}(x_*) \geq \frac{1}{1+\gamma L_i} \parens{f_i(x) - f_i(x_*)}.
    \end{align*}
    Consequently,
    \begin{align*}
        M^{\gamma}(x) - M^{\gamma}(x_*) \geq \frac{1}{1+\gamma L_{\max}} \parens{f(x) - f(x_*)}.
    \end{align*}
\end{lemma}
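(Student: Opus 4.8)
The plan is to prove the per-client inequality first and then obtain the averaged ``consequently'' statement by a one-line aggregation. The central reduction is to subtract the optimal value: set $g_i \eqdef f_i - f_i(x_*)$. Under Assumption~\ref{ass:inter}, $x_*$ is a global minimizer of each $f_i$ (since $\nabla f_i(x_*)=0$ and $f_i$ is convex), so $g_i$ is convex, $L_i$--smooth, nonnegative, with $\min g_i = 0$ attained at $x_*$. Because shifting by a constant changes neither the proximal operator nor the gradient, $\prox_{\gamma g_i} = \prox_{\gamma f_i}$ and $M^{\gamma}_{g_i} = M^{\gamma}_{f_i} - f_i(x_*)$; moreover $M^{\gamma}_{f_i}(x_*) = f_i(x_*)$, since the strongly convex inner problem $f_i(z) + \frac{1}{2\gamma}\norm{z-x_*}^2$ is minimized at $z = x_*$ (consistent with Lemma~\ref{lemma:mini_equiv_local}). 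Hence the claimed inequality is equivalent to $M^{\gamma}_{g_i}(x) \geq \frac{1}{1+\gamma L_i}\, g_i(x)$, which I would establish abstractly for any nonnegative convex $L_i$--smooth function with minimum value $0$.

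To prove this reduced inequality, write $p \eqdef \prox_{\gamma g_i}(x)$. By Lemma~\ref{lemma:grad_moreau} together with the optimality condition for the prox, $\nabla g_i(p) = \frac{1}{\gamma}(x-p)$, so $x = p + \gamma\nabla g_i(p)$ and
\begin{align*}
    M^{\gamma}_{g_i}(x) = g_i(p) + \frac{1}{2\gamma}\norm{x-p}^2 = g_i(p) + \frac{\gamma}{2}\norm{\nabla g_i(p)}^2 .
\end{align*}
I would then upper bound the target via the descent lemma ($L_i$--smoothness) applied at $p$ in direction $\gamma\nabla g_i(p)$, giving $g_i(x) \le g_i(p) + \bigl(\gamma + \frac{\gamma^2 L_i}{2}\bigr)\norm{\nabla g_i(p)}^2$. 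The final, decisive ingredient is the gradient-norm lower bound coming from smoothness plus $\min g_i = 0$, namely $\frac{1}{2L_i}\norm{\nabla g_i(p)}^2 \le g_i(p) - \min g_i = g_i(p)$, obtained by comparing $g_i$ at $p$ with its value at the minimizer.

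With these ingredients the result reduces to elementary algebra: writing $a = g_i(p) \ge 0$ and $s = \norm{\nabla g_i(p)}^2 \ge 0$, we have $M^{\gamma}_{g_i}(x) = a + \frac{\gamma}{2}s$ and $g_i(x) \le a + \bigl(\gamma + \frac{\gamma^2 L_i}{2}\bigr)s$, so the target $M^{\gamma}_{g_i}(x) \ge \frac{1}{1+\gamma L_i} g_i(x)$ is implied by $(1+\gamma L_i)\bigl(a + \frac{\gamma}{2}s\bigr) \ge a + \bigl(\gamma + \frac{\gamma^2 L_i}{2}\bigr)s$, which simplifies exactly to $\gamma L_i a \ge \frac{\gamma}{2}s$, i.e.\ to $L_i g_i(p) \ge \frac{1}{2}\norm{\nabla g_i(p)}^2$ --- precisely the estimate from the previous step. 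I expect this matching to be the crux of the argument: the whole proof succeeds only because the residual left after clearing denominators is exactly the co-coercivity bound, so it is essential to route the smoothness estimate through $\norm{\nabla g_i(p)}^2$ rather than discarding the gradient term. (The bound is tight: for $g_i(z) = \frac{L_i}{2}z^2$ one checks $M^{\gamma}_{g_i}(x) = \frac{1}{1+\gamma L_i}g_i(x)$ with equality.)

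For the ``consequently'' part, I would weaken each per-client inequality using $L_i \le L_{\max}$ (hence $\frac{1}{1+\gamma L_i} \ge \frac{1}{1+\gamma L_{\max}}$) together with $f_i(x) - f_i(x_*) \ge 0$, to get $M^{\gamma}_{f_i}(x) - M^{\gamma}_{f_i}(x_*) \ge \frac{1}{1+\gamma L_{\max}}\bigl(f_i(x) - f_i(x_*)\bigr)$, and then average over $i \in [n]$, recalling $M^{\gamma} = \frac{1}{n}\sum_{i=1}^n M^{\gamma}_{f_i}$ and $f = \frac{1}{n}\sum_{i=1}^n f_i$.
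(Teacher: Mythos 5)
Your proof is correct. Note, however, that the paper contains no proof of this statement to compare against: Lemma~\ref{lemma:m_to_f} is imported verbatim, with citation, as Lemma~10 of \citet{li2024power}, so your derivation stands as a self-contained reconstruction of an external result. On the merits, every step checks out. The reduction to $g_i \eqdef f_i - f_i(x_*)$ is legitimate: Assumptions~\ref{ass:convex} and~\ref{ass:inter} make $x_*$ a global minimizer of each $f_i$, the constant shift leaves $\prox_{\gamma f_i}$ and the gradient unchanged, and $M^{\gamma}_{f_i}(x_*) = f_i(x_*)$ follows because $z = x_*$ satisfies the optimality condition of the strongly convex inner problem (consistent with Lemma~\ref{lemma:mini_equiv_local}). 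With $p \eqdef \prox_{\gamma g_i}(x)$, the identity $M^{\gamma}_{g_i}(x) = g_i(p) + \frac{\gamma}{2}\norm{\nabla g_i(p)}^2$ follows from $\nabla g_i(p) = \frac{1}{\gamma}(x-p)$ (Lemma~\ref{lemma:grad_moreau}); the descent lemma along $x = p + \gamma \nabla g_i(p)$ gives $g_i(x) \leq g_i(p) + \bigl(\gamma + \frac{\gamma^2 L_i}{2}\bigr)\norm{\nabla g_i(p)}^2$; and the bound $\frac{1}{2L_i}\norm{\nabla g_i(p)}^2 \leq g_i(p)$ (smoothness plus $\min g_i = 0$; convexity is not even needed here) is exactly what survives after clearing the denominator, since $(1+\gamma L_i)\bigl(a + \frac{\gamma}{2}s\bigr) - \bigl(a + \gamma s + \frac{\gamma^2 L_i}{2}s\bigr) = \gamma L_i a - \frac{\gamma}{2}s$. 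Two details deserve praise for being handled explicitly rather than glossed over: the ``consequently'' step requires $f_i(x) - f_i(x_*) \geq 0$ before one may replace $\nicefrac{1}{(1+\gamma L_i)}$ by the smaller constant $\nicefrac{1}{(1+\gamma L_{\max})}$, and this nonnegativity is precisely what interpolation plus convexity supplies; and your tightness check $g_i(z) = \frac{L_i}{2}z^2$, for which $M^{\gamma}_{g_i}(x) = \frac{L_i}{2(1+\gamma L_i)}x^2$, confirms the constant $\nicefrac{1}{(1+\gamma L_i)}$ cannot be improved. The route --- evaluating both sides at the prox point and letting the co-coercivity-type gradient bound absorb the residual --- is the standard mechanism for envelope-versus-function comparisons, so your argument is fully in the spirit of the cited source while being independently verifiable from the paper's stated assumptions alone.
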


The next lemma will play a role in the analysis of the method when assuming the Polyak-{\L}ojasiewicz~(P{\L}) condition.
\begin{assumption}[Polayk-{\L}ojasiewicz condition]\label{ass:pl}
    The function $f$ satisfies P\L\,condition, i.e., there exists $\ct>0$ such that $$\frac{1}{2} \norm{\nabla f(x)}^2 \geq \ct \left(f(x) - f(x_*)\right)$$ for all $x \in \R^d.$
\end{assumption}

\begin{lemma}\label{lemma:pl_vs_mu+}
    Let the function $f$ satisfy Assumption~\ref{ass:pl} with parameter $\mu^{+}$, and suppose that Assumptions~\ref{ass:convex}, \ref{ass:local_lipschitz_constant} and \ref{ass:inter} hold. Then, Assumption \ref{ass:pl_condition_moreau} holds with parameter $$\mu^{+}_{\gamma} \geq \frac{\mu^{+}}{1 + \gamma L_{\max}}.$$
\end{lemma}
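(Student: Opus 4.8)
The goal is to show that the Polyak–Łojasiewicz inequality for $f$ propagates to $M^{\gamma}$. A tempting direct route---bounding $\|\nabla M^{\gamma}(x)\|^2$ below by $\|\nabla f(x)\|^2$ and $M^{\gamma}(x)-M^{\gamma}(x_*)$ above by $f(x)-f(x_*)$, then invoking the P\L{} inequality for $f$---is too lossy. Since $\nabla M^{\gamma}=\frac1n\sum_i\nabla M^{\gamma}_{f_i}$ is an average of gradient vectors evaluated at the distinct prox points $\prox_{\gamma f_i}(x)$, no clean comparison between $\|\nabla M^{\gamma}(x)\|$ and $\|\nabla f(x)\|$ survives the averaging (already for one-dimensional quadratics the gradient ratio degrades by a factor $1+\gamma L_{\max}$, which would cost an extra power). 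The plan is therefore to pass through the \emph{quadratic growth} (QG) property, a scalar quantity that interacts cleanly with the function-value transfer in Lemma~\ref{lemma:m_to_f}.

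First I would invoke the standard implication that P\L{} implies quadratic growth (the argument applied to $\sqrt{f(\cdot)-f(x_*)}$): Assumption~\ref{ass:pl} with constant $\mu^{+}$ yields $f(x)-f(x_*)\geq \tfrac{\mu^{+}}{2}\,\|x-\Pi(x)\|^2$ for all $x$, where $\Pi$ projects onto $\cX_*=\arg\min f$, which under Assumption~\ref{ass:inter} coincides with the common minimizer set. Next, Lemma~\ref{lemma:mini_equiv_global} guarantees that $f$ and $M^{\gamma}$ share both their minimizers and their minimal value, so $M^{\gamma}(x_*)=f(x_*)$ and the same projection $\Pi$ is relevant for $M^{\gamma}$. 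Applying the function-gap transfer of Lemma~\ref{lemma:m_to_f} then gives $M^{\gamma}(x)-M^{\gamma}(x_*)\geq \frac{1}{1+\gamma L_{\max}}\big(f(x)-f(x_*)\big)\geq \frac{\mu^{+}}{2(1+\gamma L_{\max})}\|x-\Pi(x)\|^2$, i.e. $M^{\gamma}$ satisfies QG with constant $\mu^{+}/(1+\gamma L_{\max})$. This is the step that produces the crucial $1+\gamma L_{\max}$ factor, and it does so without ever comparing gradients.

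Finally I would convert QG back into a P\L{} inequality for $M^{\gamma}$ using its convexity (Lemma~\ref{lemma:m_gamma_convex_smooth}). Convexity gives $M^{\gamma}(x)-M^{\gamma}(x_*)\leq \langle \nabla M^{\gamma}(x),\,x-\Pi(x)\rangle\leq \|\nabla M^{\gamma}(x)\|\,\|x-\Pi(x)\|$, and substituting the QG bound $\|x-\Pi(x)\|\leq \sqrt{2(1+\gamma L_{\max})/\mu^{+}}\,\sqrt{M^{\gamma}(x)-M^{\gamma}(x_*)}$ and squaring yields $\tfrac12\|\nabla M^{\gamma}(x)\|^2\geq \frac{\mu^{+}}{4(1+\gamma L_{\max})}\big(M^{\gamma}(x)-M^{\gamma}(x_*)\big)$. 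This establishes Assumption~\ref{ass:pl_condition_moreau} with $\mu^{+}_{\gamma}$ of order $\mu^{+}/(1+\gamma L_{\max})$, matching (with the $\tfrac14$ factor) the loose constant quoted in the main text.

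The main obstacle is this last conversion: the Cauchy–Schwarz step in the convex$+$QG$\Rightarrow$P\L{} implication is where the only genuine slack enters, contributing the universal factor $4$, so recovering the exact constant $\mu^{+}/(1+\gamma L_{\max})$ (rather than $\mu^{+}/(4(1+\gamma L_{\max}))$) would require a sharper, structure-specific argument than the generic convex bound. The remaining points are bookkeeping rather than substance: confirming the standard P\L{}$\Rightarrow$QG constant, and---because minimizers need not be unique---consistently using the projection $\Pi$ onto $\cX_*$ together with the equality of minimal values from Lemma~\ref{lemma:mini_equiv_global} across $f$ and $M^{\gamma}$.
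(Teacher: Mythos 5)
Your proof is correct and follows essentially the same route as the paper's: P\L{} $\Rightarrow$ quadratic growth for $f$, transfer of the function gap to $M^{\gamma}$ via Lemma~\ref{lemma:m_to_f} (with Lemma~\ref{lemma:mini_equiv_global} identifying the minimizer sets), then convexity plus Cauchy--Schwarz to convert quadratic growth back into P\L{} for $M^{\gamma}$. The factor-$4$ issue you flag is real but is not a gap in your argument relative to the paper: the paper's own proof likewise only establishes $\mu^{+}_{\gamma} \geq \mu^{+}/(4(1+\gamma L_{\max}))$, weaker than the lemma's stated constant and consistent with the bound $\mu^{+}/(4(1+\gamma L_{\max}))$ quoted in the main text.
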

\begin{proof}
    If $f$ satisfies Assumption~\ref{ass:pl}, then under Assumption~\ref{ass:convex}, Polayk-{\L}ojasiewicz condition implies
    \begin{align}
        \label{eq:YVasxCFrLhuOK}
        f(x) - f(x_*) \geq \frac{\mu^{+}}{2} \norm{x - \Pi(x)}^2 \quad \forall x \in \R^d,
    \end{align}
    \citep{karimi2016linear}, where $\Pi(x)$ is the projection of $x$ onto the solution set $\cX_*$ of $f$ and $x_*\in\cX_*$. Then, using Lemma~\ref{lemma:m_to_f}, we get
    \begin{align*}
        M^{\gamma}(x) - M^{\gamma}(x_*) \geq \frac{1}{1+\gamma L_{\max}} \parens{f(x) - f(x_*)} \overset{\eqref{eq:YVasxCFrLhuOK}}{\geq} \frac{\mu^{+}}{2 (1+\gamma L_{\max})} \norm{x - \Pi(x)}^2.
    \end{align*}
    Due to Lemma~\ref{lemma:mini_equiv_global}, $M^{\gamma}$ and $f$ share the set of minimizers. Thus, $\Pi(x)$ is the projection of $x$ onto the solution set $\cX_*$ of $M^{\gamma}.$ 
    Using convexity, we get
    \begin{align*}
        M^{\gamma}(\Pi(x)) \geq M^{\gamma}(x) + \inp{\nabla M^{\gamma}(x)}{\Pi(x) - x}
    \end{align*}
    and
    \begin{align*}
        M^{\gamma}(x) - M^{\gamma}(\Pi(x)) 
        &\leq \inp{\nabla M^{\gamma}(x)}{x - \Pi(x)} \leq \norm{\nabla M^{\gamma}(x)} \norm{x - \Pi(x)} \\
        &\leq \norm{\nabla M^{\gamma}(x)} \sqrt{\frac{2 (1+\gamma L_{\max})}{\mu^{+}}}\sqrt{M^{\gamma}(x) - M^{\gamma}(\Pi(x))}.
    \end{align*}
    Therefore
    \begin{align*}
        \frac{\mu^{+}}{4 (1+\gamma L_{\max})}\left(M^{\gamma}(x) - M^{\gamma}(\Pi(x))\right) \leq \frac{1}{2}\norm{\nabla M^{\gamma}(x)}^2,
    \end{align*}
    meaning that $M^{\gamma}$ satisfies P\L\,condition with parameter $\frac{\mu^{+}}{4 (1+\gamma L_{\max})}$.
\end{proof}

\subsection{Nice Sampling}\label{sec:nice_sampling}

Let us formally introduce the sampling strategy used.
Fix a minibatch size $S\in[n]$ and let $\cS$ be a random subset of $[n]$ of size $S$, chosen uniformly at random from all ${n \choose S}$ subsets of $[n]$ of this size. Such a random set $\cS$ is known in the literature under the name \textit{$S$-nice sampling} \citep{richtarik2016parallel}.

In the proofs, we will rely on the following useful lemma:

\begin{lemma}\label{lemma:nice_m}
    Fix $\mB_1,\ldots,\mB_n \in \R^{k\times l}$ and let $\cS$ be an $S$-nice sampling of the indices $[n]$. Then
    \begin{eqnarray*}
        &&\hspace{-1cm}\Exp{\parens{\frac{1}{S} \sum_{i\in\cS} \mB_i}^\top \parens{\frac{1}{S} \sum_{i\in\cS} \mB_i}} \\
        &=& \frac{n-S}{S(n-1)} \frac{1}{n} \sum_{i=1}^n \mB_i^\top \mB_i + \frac{n(S-1)}{S(n-1)} \parens{\frac{1}{n} \sum_{i=1}^n \mB_i}^\top \parens{\frac{1}{n} \sum_{i=1}^n \mB_i}.
    \end{eqnarray*}
\end{lemma}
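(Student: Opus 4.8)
The plan is to prove Lemma~\ref{lemma:nice_m} by direct computation of the expectation, exploiting the fact that the inner product expands into a sum of diagonal and off-diagonal terms, each of which has a uniform probability of inclusion under $S$-nice sampling. First I would expand the left-hand side as
\begin{align*}
    \Exp{\parens{\frac{1}{S} \sum_{i\in\cS} \mB_i}^\top \parens{\frac{1}{S} \sum_{i\in\cS} \mB_i}}
    = \frac{1}{S^2} \Exp{\sum_{i\in\cS}\sum_{j\in\cS} \mB_i^\top \mB_j}
    = \frac{1}{S^2} \Exp{\sum_{i,j=1}^n \mathbbm{1}_{i\in\cS} \mathbbm{1}_{j\in\cS} \mB_i^\top \mB_j}.
\end{align*}
By linearity of expectation, this reduces to computing $\Prob{i\in\cS}$ for the diagonal terms $i=j$ and $\Prob{i\in\cS, j\in\cS}$ for the off-diagonal terms $i\neq j$. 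These are standard combinatorial quantities for uniform sampling without replacement: $\Prob{i\in\cS} = \nicefrac{S}{n}$ and $\Prob{i\in\cS, j\in\cS} = \nicefrac{S(S-1)}{n(n-1)}$ for $i\neq j$.

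Next I would substitute these probabilities and separate the double sum into its diagonal and off-diagonal parts:
\begin{align*}
    \frac{1}{S^2}\parens{\frac{S}{n}\sum_{i=1}^n \mB_i^\top \mB_i + \frac{S(S-1)}{n(n-1)} \sum_{i\neq j} \mB_i^\top \mB_j}.
\end{align*}
The key algebraic manipulation is to rewrite $\sum_{i\neq j}\mB_i^\top \mB_j = \parens{\sum_{i=1}^n \mB_i}^\top \parens{\sum_{j=1}^n \mB_j} - \sum_{i=1}^n \mB_i^\top \mB_i$, converting the off-diagonal sum into the full outer product minus the diagonal. Substituting this and collecting the coefficients of $\sum_i \mB_i^\top \mB_i$ and of the outer product term, I would then verify that the coefficients simplify to exactly $\frac{n-S}{S(n-1)}\cdot\frac{1}{n}$ and $\frac{n(S-1)}{S(n-1)}\cdot\frac{1}{n^2}$ respectively, matching the claimed right-hand side after pulling out the $\nicefrac{1}{n}$ and $\nicefrac{1}{n^2}$ normalizations.

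I do not anticipate a genuine obstacle here, as this is a standard second-moment identity for $S$-nice sampling; the only place requiring care is the bookkeeping of the two coefficients after substituting the diagonal-subtraction identity, where one must confirm that the coefficient of the diagonal term $\frac{1}{n}\sum_i \mB_i^\top \mB_i$ collapses correctly to $\frac{n-S}{S(n-1)}$. Concretely, the diagonal contribution picks up $\frac{1}{Sn} - \frac{S-1}{Sn(n-1)}$, and I would check that this equals $\frac{n-S}{S(n-1)}\cdot\frac{1}{n}$ by finding a common denominator of $Sn(n-1)$ and simplifying the numerator $(n-1)-(S-1) = n-S$. This confirms the identity.
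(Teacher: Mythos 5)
Your proposal is correct and matches the paper's own proof essentially step for step: the paper likewise introduces indicator variables $\chi_i$ for membership in $\cS$, expands the quadratic form into diagonal and cross terms with $\Exp{\chi_i}=\nicefrac{S}{n}$ and $\Exp{\chi_i\chi_j}=\nicefrac{S(S-1)}{n(n-1)}$, and applies the same diagonal-subtraction identity $\sum_{i\neq j}\mB_i^\top \mB_j = \parens{\sum_i \mB_i}^\top\parens{\sum_i \mB_i} - \sum_i \mB_i^\top \mB_i$ before collecting coefficients. Your coefficient bookkeeping, including the simplification $(n-1)-(S-1)=n-S$ over the common denominator $Sn(n-1)$, is exactly right.
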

\begin{proof}
    Define
    \begin{align*}
        \chi_{i} \eqdef
        \begin{cases}
            1 & i\in\cS, \\
            0 & i\notin\cS. \\
        \end{cases}
    \end{align*}
    Then
    \begin{eqnarray*}
        &&\hspace{-1cm}\Exp{\parens{\frac{1}{S} \sum_{i\in\cS} \mB_i}^\top \parens{\frac{1}{S} \sum_{i\in\cS} \mB_i}} \\
        &=& \Exp{\parens{\frac{1}{S} \sum_{i=1}^n \chi_{i} \mB_i}^\top \parens{\frac{1}{S} \sum_{i=1}^n \chi_{i} \mB_i}} \\
        &=& \frac{1}{S^2} \Exp{\sum_{i=1}^n \chi_{i}^2 \mB_i^\top \mB_i + \sum_{i\neq j} \chi_{i} \chi_{j} \mB_i^\top \mB_j} \\
        &=& \frac{1}{S^2} \parens{\sum_{i=1}^n \Exp{\chi_{i}} \mB_i^\top \mB_i
        + \sum_{i\neq j} \Exp{\chi_{i} \chi_{j}} \mB_i^\top \mB_j} \\
        &=& \frac{1}{S^2} \parens{\frac{S}{n} \sum_{i=1}^n \mB_i^\top \mB_i
        + \frac{S(S-1)}{n(n-1)} \sum_{i\neq j} \mB_i^\top \mB_j} \\
        &=& \frac{1}{S} \parens{\frac{1}{n} \sum_{i=1}^n \mB_i^\top \mB_i
        + \frac{S-1}{n(n-1)} \parens{\parens{\sum_{i=1}^n \mB_i}^\top \parens{\sum_{i=1}^n \mB_i} - \sum_{i=1}^n \mB_i^\top \mB_i}} \\
        &=& \frac{n-S}{S(n-1)} \frac{1}{n} \sum_{i=1}^n \mB_i^\top \mB_i
        + \frac{n(S-1)}{S(n-1)} \parens{\frac{1}{n} \sum_{i=1}^n \mB_i}^\top \parens{\frac{1}{n} \sum_{i=1}^n \mB_i}.
    \end{eqnarray*}
\end{proof}

\subsection{Note on Other Sampling Strategies}

As discussed in Section \ref{sec:pp}, Algorithm \ref{algorithm:batch_fedexprox} can handle virtually any (unbiased) sampling technique, with nice sampling used here as an illustrative example. For a comprehensive overview of other potential sampling strategies, we direct the reader to \citet{tyurin2022sharper}, which offers a general framework for analyzing a broad range of sampling mechanisms.

\subsection{Useful Lemmas}

\begin{lemma}[\citep{karimi2016linear} Theorem $2$]\label{lemma:star_conv}
    Assume that the function $f$ is convex, $L$--smooth, and satisfies the Polyak-{\L}ojasiewicz condition with parameter $\ct$. Then
    \begin{align*}
        f(x) - f(x_*) \geq \frac{\ct}{2} \norm{x - \Pi(x)}^2,
    \end{align*}
    for all $x\in\R^d$, where $\Pi(x)$ is the projection of $x$ onto the solution set $\cX_*$ and $x_*\in\cX_*$.
\end{lemma}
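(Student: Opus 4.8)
The plan is to prove the equivalent \emph{quadratic growth} inequality $\norm{x - \Pi(x)}^2 \le \frac{2}{\mu}\parens{f(x) - f(x_*)}$, which rearranges to the stated bound. If $f(x) = f(x_*)$ then $x$ is a global minimizer, so $x \in \cX_*$, $\Pi(x) = x$, and both sides vanish; hence I would assume $f(x) > f(x_*)$ throughout. The idea is a continuous-time trajectory argument: fix $x$ and consider the gradient flow $\dot z(t) = -\nabla f(z(t))$ with $z(0) = x$. Because $f$ is $L$--smooth, the vector field $-\nabla f$ is Lipschitz, so the standard Picard--Lindel\"of theorem guarantees a unique flow for all $t \ge 0$. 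Along it, the P\L\ condition gives $\frac{d}{dt}\parens{f(z(t)) - f(x_*)} = -\norm{\nabla f(z(t))}^2 \le -2\mu\parens{f(z(t)) - f(x_*)}$, so the suboptimality decays exponentially, $f(z(t)) - f(x_*) \le \parens{f(x) - f(x_*)}e^{-2\mu t} \to 0$, and in particular stays strictly positive at every finite $t$.

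The heart of the argument is to bound the arc length of the trajectory by the square root of the initial suboptimality. Setting $g(t) \eqdef \sqrt{f(z(t)) - f(x_*)}$ (positive and differentiable along the flow), I would compute $g'(t) = -\frac{\norm{\nabla f(z(t))}^2}{2 g(t)}$, i.e. $\norm{\nabla f(z(t))}^2 = -2 g(t) g'(t)$. Invoking P\L\ in the form $\norm{\nabla f(z(t))} \ge \sqrt{2\mu}\,g(t)$ then yields $\norm{\nabla f(z(t))} = \frac{\norm{\nabla f(z(t))}^2}{\norm{\nabla f(z(t))}} \le \frac{-2 g(t) g'(t)}{\sqrt{2\mu}\,g(t)} = -\sqrt{\frac{2}{\mu}}\,g'(t)$. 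Integrating over $[0,\infty)$ bounds the total length: $\int_0^\infty \norm{\dot z(t)}\,dt = \int_0^\infty \norm{\nabla f(z(t))}\,dt \le \sqrt{\frac{2}{\mu}}\parens{g(0) - \lim_{t\to\infty}g(t)} = \sqrt{\frac{2}{\mu}}\sqrt{f(x) - f(x_*)}$.

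The final step converts the length bound into a distance bound. Finite arc length forces $z(t)$ to be Cauchy, so it converges to some $z_\infty$, and by continuity $f(z_\infty) = f(x_*)$, whence $z_\infty \in \cX_*$. Since $\Pi(x)$ is the nearest point of $\cX_*$ to $x$, I would conclude $\norm{x - \Pi(x)} \le \norm{x - z_\infty} \le \int_0^\infty \norm{\dot z(t)}\,dt \le \sqrt{\frac{2}{\mu}}\sqrt{f(x) - f(x_*)}$, and squaring gives the claim.

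I expect the main obstacle to be the analytic bookkeeping of the flow as $t\to\infty$: rigorously establishing that the limit $z_\infty$ exists (from finite length), that it lies in $\cX_*$, and that the closest-point bound $\norm{x-\Pi(x)} \le \norm{x - z_\infty}$ is legitimate. The differentiation of $g$ must be justified only at finite $t$, where $f(z(t)) > f(x_*)$ is guaranteed by the exponential decay. Note that $L$--smoothness enters precisely to make the flow well-posed, whereas convexity, although assumed in the statement, is inessential to this route and serves only to identify $\cX_*$ with the set of global minimizers.
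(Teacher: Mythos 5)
Your proposal is correct and coincides with the proof behind the paper's citation: the paper states this lemma without proof, quoting Theorem 2 of \citet{karimi2016linear}, whose PL $\Rightarrow$ quadratic-growth argument is precisely your gradient-flow construction with $g(t)=\sqrt{f(z(t))-f(x_*)}$, the arc-length bound $\int_0^\infty \norm{\dot z(t)}\,dt \le \sqrt{\nicefrac{2}{\mu}}\,\sqrt{f(x)-f(x_*)}$, and the closest-point comparison with the limit $z_\infty\in\cX_*$. The only repair needed is your justification that $f(z(t))>f(x_*)$ at finite times --- this does not follow from the exponential \emph{decay} (an upper bound); instead derive it from $L$--smoothness via $\norm{\nabla f(z)}^2 \le 2L\parens{f(z)-f(x_*)}$, which yields $f(z(t))-f(x_*) \ge e^{-2Lt}\parens{f(x)-f(x_*)}>0$, or from backward uniqueness of the flow (any point attaining the value $f(x_*)$ is an equilibrium, forcing the trajectory to be constant).
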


\begin{lemma}[\citep{nesterov2018lectures}]\label{lemma:grad_breg}
    Let $D_f(x, y) \eqdef f(x) - f(y) - \inp{\nabla f(y)}{x-y}$ be the Bregman divergence of the function $f$.
    If $f$ is convex, $L$--smooth and differentiable, then
    \begin{align*}
        \frac{1}{L} \norm{\nabla f(x) - \nabla f(y)}^2 \leq D_f(x, y) + D_f(y, x)
    \end{align*}
    for any $x, y \in\R^d$.
\end{lemma}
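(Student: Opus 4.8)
The plan is to reduce the claim to the cocoercivity of $\nabla f$ and prove it by a shifted-function argument, bounding each Bregman term separately. As a motivating observation, expanding the two divergences shows that the function values cancel and the linear terms combine, so that
\begin{align*}
    D_f(x,y) + D_f(y,x) = \inp{\nabla f(x) - \nabla f(y)}{x - y},
\end{align*}
meaning the lemma is equivalent to the standard cocoercivity bound $\frac{1}{L}\norm{\nabla f(x) - \nabla f(y)}^2 \leq \inp{\nabla f(x) - \nabla f(y)}{x-y}$. Rather than handle this combined form directly, I would prove the sharper per-term inequality $D_f(x,y) \geq \frac{1}{2L}\norm{\nabla f(x) - \nabla f(y)}^2$ and then symmetrize.

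The key step is to fix $y$ and introduce the shifted function $\phi(z) \eqdef f(z) - \inp{\nabla f(y)}{z}$. Since $f$ is convex and we only subtract a linear term, $\phi$ is convex; since the subtracted term does not affect the second-order behaviour, $\phi$ remains $L$-smooth; and $\nabla \phi(z) = \nabla f(z) - \nabla f(y)$ vanishes at $z = y$, so convexity guarantees that $y$ is a global minimizer of $\phi$. Applying the descent lemma (the standard quadratic upper bound for $L$-smooth functions) to $\phi$ at the point $x$ with the gradient step $x - \frac{1}{L}\nabla \phi(x)$ gives
\begin{align*}
    \phi\parens{x - \tfrac{1}{L}\nabla \phi(x)} \leq \phi(x) - \tfrac{1}{2L}\norm{\nabla \phi(x)}^2.
\end{align*}
Because $y$ minimizes $\phi$, the left-hand side is at least $\phi(y)$, and rearranging yields $\phi(x) - \phi(y) \geq \frac{1}{2L}\norm{\nabla \phi(x)}^2$. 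Unwinding the definition of $\phi$, the left-hand side equals $D_f(x,y)$ and $\nabla \phi(x) = \nabla f(x) - \nabla f(y)$, so I obtain
\begin{align*}
    D_f(x,y) \geq \frac{1}{2L}\norm{\nabla f(x) - \nabla f(y)}^2.
\end{align*}

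Finally I would interchange the roles of $x$ and $y$ to get the symmetric estimate $D_f(y,x) \geq \frac{1}{2L}\norm{\nabla f(x) - \nabla f(y)}^2$, and add the two inequalities to reach the claimed bound. The only delicate points are justifying the descent-lemma estimate, which genuinely uses $L$-smoothness, and the claim that $y$ is a \emph{global} minimizer of $\phi$, which genuinely uses convexity; since both hypotheses feed into distinct steps, there is no real obstacle beyond assembling these standard ingredients in the correct order.
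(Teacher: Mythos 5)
Your proof is correct and is essentially the classical argument: the paper gives no proof of its own, citing \citet{nesterov2018lectures}, and your shifted-function derivation of the per-term bound $D_f(x,y) \geq \frac{1}{2L}\norm{\nabla f(x)-\nabla f(y)}^2$ followed by symmetrization is exactly the standard proof found there (Theorem 2.1.5). All steps check out, including the observation that $\phi(z) = f(z) - \inp{\nabla f(y)}{z}$ inherits convexity and $L$-smoothness and is globally minimized at $y$.
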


\newpage

\section{FedExProx as (S)GD on Moreau Envelope Reformulation}\label{sec:fedexprox_sgd}

\begin{algorithm}[t]
    \caption{\algname{FedExProx}}
    \label{algorithm:fedexprox}
    \begin{algorithmic}[1]
    \STATE \textbf{Parameters:} stepsize $\gamma > 0$, extrapolation parameter $\alpha_t>0$, starting point $x_0 \in \R^d$
    \FOR{$k = 0, 1, 2, \dots$}
        \STATE $x_{k+1} = x_k + \alpha_k \parens{\frac{1}{n} \sum_{i=1}^{n} \prox_{\gamma f_i} (x_k) - x_k}$
    \ENDFOR
    \end{algorithmic}
\end{algorithm}

The key idea behind the original analysis of \algname{FedExProx} is to rewrite the step
\begin{align*}
    x_{k+1} = x_{k} + \alpha_k\left(\frac{1}{n}\sum_{i=1}^{n}\prox_{\gamma f_i}(x_{k}) - x_{k}\right)
\end{align*}
in terms of the Moreau envelopes of the local functions.
Indeed, if the functions $f_i$ are proper, closed and convex for all $i\in[n]$, by Lemma \ref{lemma:grad_moreau}, the update rule of Algorithm \ref{algorithm:fedexprox} can be rewritten as
\begin{align*}
    x_{k+1} = x_{k} - \alpha_k \gamma \frac{1}{n}\sum_{i=1}^{n} \nabla M_{f_i}^{\gamma}(x_{k}).
\end{align*}
Analogously, in the partial participation case (Algorithm \ref{algorithm:batch_fedexprox}), we have
\begin{align*}
    x_{k+1} = x_{k} - \alpha_k \gamma \frac{1}{S}\sum_{i\in\cS_k} \nabla M_{f_i}^{\gamma}(x_{k}).
\end{align*}
It follows that Algorithms \ref{algorithm:fedexprox} and \ref{algorithm:batch_fedexprox} are equivalent to \algname{GD} and minibatch \algname{SGD} for minimizing $$M^{\gamma}(x) \eqdef \frac{1}{n}\sum_{i=1}^{n} M_{f_i}^{\gamma}(x)$$ with stepsize $\alpha_k\gamma$.
    
\newpage

\section{FedExProx for Quadratics}\label{sec:fedexprox_quad}

We begin by formally introducing the problem setup and the notation used throughout the proofs.

We are interested in solving the quadratic optimization problem
\begin{align}\label{eq:problem_quad}
    \min_{x\in\R^n} \brac{f(x) \eqdef \frac{1}{n} \sum_{i=1}^n \parens{\frac{1}{2} x^\top \mA_i x - b_i^\top x}},
\end{align}
where $\mA_i \in \textnormal{Sym}^{d}_{+} \eqdef \{\mX \in \R^{d\times d} \,|\, \mX=\mX^\top, \mX\succeq 0\}$ and $b_i \in \R^d$, assuming the interpolation regime (Assumption \ref{ass:inter}), which in this case states that there exists $x_*$ such that $\mA_{i}x_* = b_i$ for all~$i\in[n]$.

We denote the set of minimizers by $\cX_* \eqdef \{x\in\R^d: \frac{1}{n} \sum_{i=1}^{n} (\mA_{i}x - b_i) = 0\}$ and let $\Pi(\cdot)$ be the projection onto $\cX_*$:
\begin{align}\label{eq:projcetion}
    \Pi(x) = x - \parens{\frac{1}{n} \sum_{i=1}^{n} \mA_i}^{\dagger} \parens{\frac{1}{n} \sum_{i=1}^{n} (\mA_i x - b_i)},
\end{align}
where $\dagger$ denotes the Moore-Penrose pseudoinverse.

Using eigendecomposition, each matrix $\mA_i$, $i\in[n]$ can be decomposed as $\mA_i = \mQ_i \blambda_i \mQ_i^\top$, where $\mQ_i =[q_{i,1},\ldots,q_{i,d}] \in \R^{d\times d}$ is the orthogonal matrix whose columns are eigenvectors of $\mA_i$ and $\blambda_i \in \R^{d\times d}$ is a diagonal matrix whose diagonal elements are the corresponding eigenvalues of~$\mA_i$, i.e., $\blambda_i = \textnormal{diag}\parens{\lambda_{1}(\mA_i), \ldots, \lambda_{d}(\mA_i)}$, where $0 = \lambda_{1}(\mA_i) \leq \lambda_{2}(\mA_i) \leq \ldots \leq \lambda_{d}(\mA_i)$. Let~$\lambda_{\min}^+(\mA_i)$ denote the smallest non-zero eigenvalue of $\mA_i$ and~$\lambda_{\max}(\mA_i)$--the largest eigenvalue of $\mA_i$.

In general, the largest eigenvalue, smallest eigenvalue, and smallest positive eigenvalue of any matrix~$\mB$ will be denoted by $\lambda_{\max}(\mB)$, $\lambda_{\min}(\mB)$ and $\lambda_{\min}^+(\mB)$, respectively.

The proximal operators in this setting are
\begin{align}\label{eq:quad_prox}
    \prox_{\gamma f_i}(x)= \parens{\gamma \mA_i + \mI}^{-1} \parens{x + \gamma b_i}.
\end{align}
Thus, by Lemma \ref{lemma:grad_moreau},
\begin{align}\label{eq:nabla_mfi}
    \nabla M^{\gamma}_{f_{i}}(x) = \frac{1}{\gamma} \parens{x - \parens{\gamma \mA_i + \mI}^{-1} \parens{x + \gamma b_i}}
    = \frac{1}{\gamma} \parens{\mI - \parens{\gamma \mA_i + \mI}^{-1} } (x - x_*)
\end{align}
for any $x_*\in\cX_*$, where we use the fact that $\mA_{i}x_* = b_{i}$.
Furthermore, it can easily be shown that
\begin{align}\label{eq:fedexprox_mfi}
    M^{\gamma}_{f_{i}}(x) & = \frac{1}{2\gamma}\left(x^{T}(\mI-(\gamma \mA_{i} + \mI)^{-1})x - 2\gamma b_{i}^{T}(\gamma \mA_{i} + \mI)^{-1}x - \gamma^{2}b_{i}^{T}(\gamma \mA_{i} + \mI)^{-1}b_{i}\right) \nonumber \\
    & =\frac{1}{2\gamma} \parens{(x-x_*)^{T}(\mI-(\gamma \mA_{i} + \mI)^{-1})(x-x_*) - \gamma x_*^{T} \mA_{i} x_*},
\end{align}
and hence
\begin{eqnarray}\label{eq:fedexprox_m}
    M^{\gamma}(x) & = & \frac{1}{2} x^{T} \left(\frac{1}{n} \sum_{i=1}^{n} \frac{1}{\gamma} (\mI - (\gamma \mA_{i} + \mI)^{-1})\right) x - \left(\frac{1}{n}\sum_{i=1}^{n}(\gamma \mA_{i} + \mI)^{-1}b_{i}\right)^{T}x \nonumber \\
    &&- \frac{\gamma}{2n}\sum_{i=1}^{n}b_{i}^{T}(\gamma \mA_{i} + \mI)^{-1}b_{i} \nonumber\\
    &=& \frac{1}{2} x^{T} \mM x - \left(\frac{1}{n}\sum_{i=1}^{n}(\gamma \mA_{i} + \mI)^{-1}b_{i}\right)^{T}x- \frac{\gamma}{2n}\sum_{i=1}^{n}b_{i}^{T}(\gamma \mA_{i} + \mI)^{-1}b_{i}
\end{eqnarray}
where $\mM \eqdef \frac{1}{n}\sum_{i=1}^{n}\frac{1}{\gamma}(\mI - (\gamma \mA_{i} + \mI)^{-1})$ is the Hessian matrix of the function $M^{\gamma}$.
This can equivalently be written as
\begin{eqnarray}\label{eq:fedexprox_m2}
    M^{\gamma}(x) &=& \frac{1}{2} (x-x_*)^{T} \mM (x-x_*) - x_*^{T}\left(\frac{1}{2n}\sum_{i=1}^{n} \mA_{i}\right)x_*
\end{eqnarray}
for any $x_*\in\cX_*$.
We shall denote by $\lambda_{\min}^+(\gamma)$ and $\lambda_{\max}(\gamma)$ the smallest non-zero eigenvalue and the largest eigenvalue of $\mM$, respectively.

\subsection{Properties of the Hessian}

Next, we introduce several important properties of the matrix $\mM$, which will be repeatedly used in the proofs that follow.

In the main part of the paper, we stick to the notation $L_{\gamma}$ and $\mu^{+}_{\gamma}$ for easier comparison with the results by \citet{li2024power}. However, in the proofs concerning the quadratic case, we adopt a more intuitive eigenvalue-based notation.
\begin{fact}\label{fact:hess_m}
    Since $\mM = \nabla^2 M^{\gamma}$, it holds that $L_{\gamma} = \lambda_{\max}(\gamma)$ and $\mu^{+}_{\gamma} = \lambda_{\min}^+(\gamma)$.
\end{fact}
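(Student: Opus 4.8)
The plan is to exploit the fact, already established in \eqref{eq:fedexprox_m}, that $M^{\gamma}$ is a quadratic function whose Hessian is the constant matrix $\mM = \frac{1}{n}\sum_{i=1}^{n}\frac{1}{\gamma}(\mI - (\gamma \mA_{i} + \mI)^{-1})$, so that $\nabla^2 M^{\gamma}(x) = \mM$ for every $x \in \R^d$. Both claimed identities are then statements about the eigenvalues of this single matrix, and I would verify each one separately against the relevant definition.

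The identity $\mu^{+}_{\gamma} = \lambda_{\min}^+(\gamma)$ is essentially definitional. In the quadratic setting of Theorem~\ref{thm:quad}, $\mu^{+}_{\gamma}$ is precisely the smallest non-zero eigenvalue of $\nabla^2 M^{\gamma}$; since $\nabla^2 M^{\gamma} \equiv \mM$, this coincides with the smallest non-zero eigenvalue $\lambda_{\min}^+(\gamma)$ of $\mM$ by definition, so no further work is needed. One may note in passing that $\mM \succeq 0$, since $\mI - (\gamma \mA_{i} + \mI)^{-1} \succeq 0$ whenever $\mA_{i} \succeq 0$, which guarantees that the notions ``smallest non-zero eigenvalue'' and $\lambda_{\min}^+$ are well posed.

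For the identity $L_{\gamma} = \lambda_{\max}(\gamma)$, I would invoke the standard characterisation of the smoothness constant of a quadratic. From \eqref{eq:fedexprox_m} the gradient $\nabla M^{\gamma}(x) = \mM x - c$ is affine, so for any $x, y \in \R^d$ we have $\norm{\nabla M^{\gamma}(x) - \nabla M^{\gamma}(y)} = \norm{\mM(x - y)} \leq \lambda_{\max}(\mM) \norm{x - y}$, which shows that $M^{\gamma}$ is $\lambda_{\max}(\gamma)$--smooth. To confirm that this is the \emph{tightest} such constant, I would take $x - y$ to be an eigenvector of $\mM$ associated with $\lambda_{\max}(\mM)$, for which the inequality holds with equality; hence no smaller Lipschitz constant of $\nabla M^{\gamma}$ is admissible, and the smoothness constant $L_{\gamma}$ equals exactly $\lambda_{\max}(\mM) = \lambda_{\max}(\gamma)$.

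There is no genuine obstacle here: the only point demanding a moment of care is the tightness direction of the smoothness claim, namely that $L_{\gamma}$ \emph{equals} rather than merely upper-bounds $\lambda_{\max}(\gamma)$, which the eigenvector choice settles at once. Everything else is a direct reading of the definitions against the constant Hessian $\mM$.
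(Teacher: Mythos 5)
Your proposal is correct and matches the paper's approach: the paper's proof is the one-line remark that the fact is an immediate consequence of \eqref{eq:fedexprox_m}, and your argument simply spells out the details implicit in that remark (constant Hessian $\mM$, eigenvalue definitions, and tightness of the Lipschitz constant via an extremal eigenvector). Your extra care on the tightness direction and on $\mM \succeq 0$ is a sound, if not strictly necessary, elaboration of the same reasoning.
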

\begin{proof}
    The result is an immediate consequence of \eqref{eq:fedexprox_m}.
\end{proof}

\begin{fact}\label{fact:m_eigen_evals}
    Let $\lambda_{\max}(\gamma)$ and $\lambda_{\min}^{+}(\gamma)$ be the smallest non-zero eigenvalue and the largest eigenvalue of the matrix
    \begin{align*}
        \mM = \frac{1}{n}\sum_{i=1}^{n} \frac{1}{\gamma} \parens{\mI-(\gamma \mA_{i}+\mI)^{-1}},
    \end{align*}
    respectively. Then
    \begin{align}\label{eq:lmax_eig}
        \lambda_{\max}(\gamma) = \max_{\norm{x} \leq 1} \frac{1}{n}\sum_{i=1}^{n} x^\top \mQ_i \left[\frac{\lambda_{j}(\mA_{i})}{1 + \gamma \lambda_{j}(\mA_{i})}\right]_{jj} \mQ_i^\top x
    \end{align}
    and
    \begin{align}\label{eq:lmin+_eig}
        \lambda_{\min}^{+}(\gamma) = \min_{\norm{x} = 1, x \in \ker(\mA)^{\perp}} \frac{1}{n}\sum_{i=1}^{n} x^\top \mQ_i \left[\frac{\lambda_{j}(\mA_{i})}{1 + \gamma \lambda_{j}(\mA_{i})}\right]_{jj} \mQ_i^\top x,
    \end{align}
    where $\mA \eqdef \frac{1}{n} \sum_{i=1}^{n} \mA_i$ and $[b_j]_{jj}$ is the diagonal matrix with $b_j$ as the $j$th entry.
\end{fact}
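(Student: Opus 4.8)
The plan is to reduce both claims to the standard Rayleigh-quotient (Courant--Fischer) characterization of the eigenvalues of a symmetric positive semidefinite matrix, after first rewriting $\mM$ in a form that makes its eigenstructure transparent. The starting point is the eigendecomposition $\mA_i = \mQ_i \blambda_i \mQ_i^\top$. Since $\mQ_i$ is orthogonal, $\gamma \mA_i + \mI = \mQ_i (\gamma \blambda_i + \mI) \mQ_i^\top$, and hence $(\gamma \mA_i + \mI)^{-1} = \mQ_i (\gamma \blambda_i + \mI)^{-1} \mQ_i^\top$. Substituting this into the definition of $\mM$ and simplifying the diagonal entries via $\tfrac{1}{\gamma}\parens{1 - \tfrac{1}{1+\gamma\lambda_j(\mA_i)}} = \tfrac{\lambda_j(\mA_i)}{1+\gamma\lambda_j(\mA_i)}$ yields
$$\squeeze \mM = \frac{1}{n}\sum_{i=1}^{n} \mQ_i \left[\frac{\lambda_{j}(\mA_{i})}{1 + \gamma \lambda_{j}(\mA_{i})}\right]_{jj} \mQ_i^\top,$$
so that $x^\top \mM x$ is exactly the quadratic form appearing on the right-hand sides of \eqref{eq:lmax_eig} and \eqref{eq:lmin+_eig}.

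For \eqref{eq:lmax_eig}, since $\mM \succeq 0$ (each summand is positive semidefinite), the largest eigenvalue equals the maximum of the Rayleigh quotient; because the quadratic form is nonnegative and positively homogeneous of degree two, the maximum over the closed unit ball $\norm{x}\leq 1$ is attained on the sphere and equals $\lambda_{\max}(\mM) = \lambda_{\max}(\gamma)$. This gives the first identity immediately.

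For \eqref{eq:lmin+_eig}, the subtlety is that we must minimize over $\ker(\mA)^\perp$ rather than over all unit vectors, so the key step is to show that $\ker(\mM) = \ker(\mA)$. To this end, observe that the scalar map $g(t) \eqdef t/(1+\gamma t)$ satisfies $g(0)=0$ and $g(t)>0$ for $t>0$; therefore $g(\mA_i) = \mQ_i [g(\lambda_j(\mA_i))]_{jj}\mQ_i^\top$ has exactly the same null space as $\mA_i$. Since $\mM = \frac{1}{n}\sum_i g(\mA_i)$ is a sum of positive semidefinite matrices, its null space is the intersection of the null spaces of the summands, $\ker(\mM) = \bigcap_i \ker(g(\mA_i)) = \bigcap_i \ker(\mA_i)$; the same intersection formula applied to the positive semidefinite matrices $\mA_i$ gives $\ker(\mA) = \bigcap_i \ker(\mA_i)$. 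Hence $\ker(\mM)=\ker(\mA)$, and consequently $\ker(\mM)^\perp = \ker(\mA)^\perp$. The smallest non-zero eigenvalue of the symmetric positive semidefinite matrix $\mM$ is the minimum of its Rayleigh quotient restricted to $\ker(\mM)^\perp$, which by the above equals the minimum over $\{x : \norm{x}=1,\ x\in\ker(\mA)^\perp\}$, establishing \eqref{eq:lmin+_eig}.

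The main obstacle is the null-space identification $\ker(\mM)=\ker(\mA)$, which is precisely what licenses replacing the constraint set of the minimization by $\ker(\mA)^\perp$; everything else is a direct application of Courant--Fischer. The argument above handles it cleanly using the fact that a sum of positive semidefinite matrices annihilates exactly those vectors annihilated by every summand, together with the observation that the spectral transformation $t \mapsto t/(1+\gamma t)$ preserves both the zero and the strictly positive part of the spectrum.
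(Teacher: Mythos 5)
Your proof is correct, and its top-level skeleton matches the paper's: both rewrite $\mM$ via the eigendecompositions $\mA_i = \mQ_i \blambda_i \mQ_i^\top$ as $\frac{1}{n}\sum_{i=1}^{n} \mQ_i \left[\frac{\lambda_{j}(\mA_{i})}{1+\gamma\lambda_{j}(\mA_{i})}\right]_{jj} \mQ_i^\top$ and then reduce \eqref{eq:lmax_eig} and \eqref{eq:lmin+_eig} to Rayleigh-quotient identities, with all of the genuine difficulty concentrated in the null-space identification $\ker(\mM)=\ker(\mA)$. Where you diverge is in how that identification is established. The paper's Lemma~\ref{lemma:lmin+eigen} characterizes $x\in\ker(\mM)$ via the fixed-point equation $\frac{1}{n}\sum_{i=1}^{n}(\gamma\mA_i+\mI)^{-1}x = x$ and then runs a two-case analysis (some $\mA_j$ positive definite versus all $\lambda_{\min}(\mA_i)=0$), using eigenvalue bounds on the averaged inverses to force $(\gamma\mA_i+\mI)^{-1}x = x$, i.e.\ $\mA_i x = 0$, for every $i$. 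You instead note that the spectral map $g(t) = t/(1+\gamma t)$ vanishes exactly at $t=0$, so $\ker(g(\mA_i)) = \ker(\mA_i)$, and then invoke the fact that the kernel of a sum of positive semidefinite matrices equals the intersection of the summands' kernels, applied once to $\mM = \frac{1}{n}\sum_i g(\mA_i)$ and once to $\mA = \frac{1}{n}\sum_i \mA_i$. This is shorter, eliminates the case split entirely, and is more robust: the identical one-line argument works for any spectral transformation preserving the zero and strictly positive parts of the spectrum, whereas the paper's fixed-point argument is tailored to the specific resolvent form $(\gamma\mA_i+\mI)^{-1}$. The one step you assert without proof is the intersection fact itself; it follows from the standard observation that $x^\top \mB x = 0$ implies $\mB x = 0$ for $\mB \succeq 0$ (write $\mB = \mathbf{C}^\top\mathbf{C}$), and deserves a sentence if written out in full. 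The paper's longer route buys nothing extra here beyond being self-contained at the level of explicit eigenvalue estimates.
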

\begin{proof}
    Using the eigendecomposition, we can write $\mM$ as
    \begin{align*}
        \mM = \frac{1}{n}\sum_{i=1}^{n} \frac{1}{\gamma} \mQ_i \parens{\mI-(\gamma \blambda_{i}+\mI)^{-1}} \mQ_i^\top
        = \frac{1}{n}\sum_{i=1}^{n} \mQ_i \left[\frac{\lambda_{j}(\mA_{i})}{1 + \gamma \lambda_{j}(\mA_{i})}\right]_{jj} \mQ_i^\top.
    \end{align*}
    The result follows from the identities
    \begin{align*}
        \lambda_{\max}(\gamma) = \max_{\norm{x} \leq 1}  x^\top \mM x
    \end{align*}
    and
    \begin{align*}
        \lambda_{\min}^{+}(\gamma) = \min_{\norm{x} = 1, x \in (\ker \mA)^{\perp}}  x^\top \mM x
    \end{align*}
    (see Lemma \ref{lemma:lmin+eigen}).
\end{proof}

\begin{fact}\label{fact:lg_l}
    For any $\gamma \geq 0$, it holds that $\lambda_{\max}(\gamma) \leq \lambda_{\max}(\mA)$ and $\lambda_{\min}^+(\gamma) \leq \lambda_{\min}^+(\mA)$.
\end{fact}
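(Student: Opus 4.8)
The plan is to reduce both inequalities to a single Loewner-order domination $\mM \preceq \mA$, which I establish eigenvalue-by-eigenvalue. Fix $\gamma \ge 0$ and $i \in [n]$. Using the eigendecomposition $\mA_i = \mQ_i \blambda_i \mQ_i^\top$, the $i$-th summand of $\mM$ is $\frac{1}{\gamma}(\mI - (\gamma\mA_i + \mI)^{-1}) = \mQ_i \left[\frac{\lambda_j(\mA_i)}{1+\gamma\lambda_j(\mA_i)}\right]_{jj} \mQ_i^\top$, exactly as in the proof of Fact~\ref{fact:m_eigen_evals}. Since the scalar map $\lambda \mapsto \frac{\lambda}{1+\gamma\lambda}$ satisfies $0 \le \frac{\lambda}{1+\gamma\lambda} \le \lambda$ for every $\lambda \ge 0$ and $\gamma \ge 0$ (with equality at $\gamma=0$), each diagonal entry is dominated by the corresponding eigenvalue $\lambda_j(\mA_i)$ of $\mA_i = \mQ_i \blambda_i \mQ_i^\top$. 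Hence this summand is $\preceq \mA_i$ in the PSD order, and averaging over $i$ yields $\mM \preceq \mA$.

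The first inequality is then immediate: by the variational characterization \eqref{eq:lmax_eig}, equivalently $\lambda_{\max}(\gamma) = \max_{\norm{x}\le 1} x^\top \mM x$, together with $\mM \preceq \mA$, I obtain
\begin{align*}
    \lambda_{\max}(\gamma) = \max_{\norm{x}\le 1} x^\top \mM x \le \max_{\norm{x}\le 1} x^\top \mA x = \lambda_{\max}(\mA).
\end{align*}

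For the second inequality the care needed is to ensure the two constrained minimizations run over the same subspace. Since $\lambda \mapsto \frac{\lambda}{1+\gamma\lambda}$ vanishes only at $\lambda = 0$, the summand $\frac{1}{\gamma}(\mI - (\gamma\mA_i+\mI)^{-1})$ shares its null space with $\mA_i$; as all matrices involved are PSD, $\ker\mM = \bigcap_{i} \ker\mA_i = \ker\mA$. Therefore the constraint $x \in (\ker\mA)^{\perp}$ appearing in \eqref{eq:lmin+_eig} is exactly the subspace $(\ker\mM)^{\perp}$, so \eqref{eq:lmin+_eig} genuinely computes $\lambda_{\min}^{+}(\gamma)$, and the same set $S \eqdef \{x : \norm{x}=1,\, x \in (\ker\mA)^{\perp}\}$ serves for the variational form of $\lambda_{\min}^{+}(\mA)$ (Lemma~\ref{lemma:lmin+eigen}). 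Since $x^\top \mM x \le x^\top \mA x$ for all $x \in S$, evaluating at the minimizer $x_\star$ of the $\mA$-problem gives $\lambda_{\min}^{+}(\gamma) = \min_{x\in S} x^\top \mM x \le x_\star^\top \mM x_\star \le x_\star^\top \mA x_\star = \lambda_{\min}^{+}(\mA)$. The main (and essentially only) obstacle is precisely this subspace-matching step: without $\ker\mM = \ker\mA$ the two ``smallest non-zero eigenvalue'' quantities would not be comparable term-by-term. Once that identity is in hand, monotonicity of the Rayleigh quotient under the Loewner order closes both claims.
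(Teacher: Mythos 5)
Your proof is correct and takes essentially the same route as the paper: the same per-summand eigendecomposition and scalar domination $\frac{\lambda}{1+\gamma\lambda}\leq\lambda$ combined with the variational characterizations \eqref{eq:lmax_eig} and \eqref{eq:lmin+_eig}, with your Loewner-order statement $\mM \preceq \mA$ merely repackaging the paper's term-by-term Rayleigh-quotient comparison. The only substantive addition is on the second inequality, which the paper dismisses as ``analogous'': your explicit check that $\ker\mM=\ker\mA$ (already proved inside Lemma~\ref{lemma:lmin+eigen}, though your PSD-kernel argument is slicker than the paper's case analysis) is precisely the subspace-matching step that makes the two constrained minimizations comparable, so you have filled in the one detail the paper leaves implicit.
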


\begin{proof}
    From Fact \ref{fact:m_eigen_evals}, we have
    \begin{align*}
        \lambda_{\max}(\gamma) \overset{\eqref{eq:lmax_eig}}{=} \max_{\norm{x} \leq 1} \frac{1}{n}\sum_{i=1}^{n} x^T \mQ_i \left[\frac{\lambda_{j}(\mA_{i})}{1 + \gamma \lambda_{j}(\mA_{i})}\right]_{jj} \mQ_i^T x,
    \end{align*}
    and similarly
    \begin{align*}
        \lambda_{\max}(\mA) = \max_{\norm{x} \leq 1} \frac{1}{n}\sum_{i=1}^{n} x^T \mQ_i \left[\lambda_{j}(\mA_{i})\right]_{jj} \mQ_i^T x.
    \end{align*}
    Since it always holds that $\frac{\lambda_{j}(\mA_{i})}{1 + \gamma \lambda_{j}(\mA_{i})} \leq \lambda_{j}(\mA_{i})$, we get 
    \begin{align*}
        \lambda_{\max}(\gamma) \leq \lambda_{\max}(\mA).
    \end{align*}
    The proof of the second inequality is analogous.
\end{proof}

\subsection{Proofs for Section \ref{sec:not_better}}

\THMPESSIMISTIC*
\begin{proof}
    We first prove that $L_{\gamma} (1+\gamma L_{\max}) \geq L = \lambda_{\max}\left(\frac{1}{n} \sum_{i=1}^n \mA_i\right)$ for all $\gamma \geq 0.$
    Since
    \begin{align*}
        \nabla^2 M_{f_{i}}(x) = \frac{1}{\gamma} \left(\mI - \parens{\gamma \mA_i + \mI}^{-1}\right),
    \end{align*}
    we have
    \begin{align*}
        L_{\gamma} = \lambda_{\max}\left(\frac{1}{n} \sum_{i=1}^n \frac{1}{\gamma} \left(\mI - \parens{\gamma \mA_i + \mI}^{-1}\right)\right).
    \end{align*}
    Therefore, using the fact that $L_{\max} = \max_{i} \lambda_{\max}(\mA_i)$, we obtain
    \begin{align}\label{eq:asdasfbfg}
        (1+\gamma L_{\max}) L_{\gamma}
        & = \parens{1+\gamma \max_{i} \lambda_{\max}(\mA_i)} \frac{1}{\gamma} \lambda_{\max}\left(\frac{1}{n} \sum_{i=1}^n \left(\mI - \left(\gamma \mA_i + \mI\right)^{-1}\right)\right) \\
        & =\lambda_{\max}\left(\frac{1}{n} \sum_{i=1}^n \frac{1 + \gamma\max_{i} \lambda_{\max}(\mA_i)}{\gamma} \left(\mI - \left(\gamma\mA_i + \mI\right)^{-1}\right)\right), \nonumber
    \end{align}
    and hence, it is sufficient to show that
    \begin{align}
        \label{eq:oQsbGlxhT}
        \frac{1 + \gamma\max_{i} \lambda_{\max}(\mA_i)}{\gamma} \left(\mI - \left(\gamma\mA_i + \mI\right)^{-1}\right) \succeq \mA_i.
    \end{align}
    Using the eigenvalue decomposition of $\mA_i$, the expression can be rewritten as
    \begin{align*}
        \frac{1 + \gamma\max_{i} \lambda_{\max}(\mA_i)}{\gamma} \left(\mI - \left(\gamma\mA_i + \mI\right)^{-1}\right)
        & =\frac{1 + \gamma\max_{i} \lambda_{\max}(\mA_i)}{\gamma} \left(\mI - \left(\gamma \mQ_i \blambda_i \mQ_i^\top + \mI\right)^{-1}\right) \\
        & =\frac{1 + \gamma\max_{i} \lambda_{\max}(\mA_i)}{\gamma} \mQ_i \left(\mI - \left(\gamma \blambda_i + \mI\right)^{-1}\right) \mQ_i^\top.
    \end{align*}
    Now, letting $[\mB]_{j}$ be the $j$th diagonal element of matrix $\mB$, we have
    \begin{align*}
        & \left[\frac{1 + \gamma\max_{i} \lambda_{\max}(\mA_i)}{\gamma} \left(\mI - \left(\gamma \blambda_i + \mI\right)^{-1}\right)\right]_{j}
        =\frac{1 + \gamma\max_{i} \lambda_{\max}(\mA_i)}{\gamma} \left[\mI - \left(\gamma \blambda_i + \mI\right)^{-1}\right]_{j} \\
        & =\frac{1 + \gamma\max_{i} \lambda_{\max}(\mA_i)}{\gamma} \left(1 - \frac{1}{\gamma[\blambda_i]_{j} + 1}\right)
        = \frac{1 + \gamma\max_{i} \lambda_{\max}(\mA_i)}{\gamma} \frac{\gamma \lambda_{j}(\mA_i)}{1 + \gamma \lambda_{j}(\mA_i)}
        \geq \lambda_{j}(\mA_i),
    \end{align*}
    and hence
    \begin{align*}
        \frac{\left(1 + \gamma\max_{i} \lambda_{\max}(\mA_i)\right)}{\gamma} \left(\mI - \left(\gamma\blambda_i + \mI\right)^{-1}\right) \succeq \blambda_i.
    \end{align*}
    Multiplying by $\mQ_i$ from the left and by $\mQ_i^\top$ from the right, we see that \eqref{eq:oQsbGlxhT} holds, and we can conclude that
    \begin{align*}
        (1+\gamma L_{\max}) L_{\gamma} \geq \lambda_{\max}\left(\frac{1}{n} \sum_{i=1}^n \mA_i\right) = L
    \end{align*}
    for all $\gamma \geq 0.$ Inequality \eqref{eq:pess_inequality} follows form Assumption \ref{ass:local_compl}.    

    We now turn to the second part of the theorem and show that $\pi(\gamma) \times \frac{L_{\gamma} (1 + \gamma L_{\max}) R^2}{\varepsilon} \to \pi(0) \times \frac{L R^2}{\varepsilon}$ as $\gamma \to 0$.
    The idea behind the proof is that for small enough $\gamma$, it holds that
    \begin{eqnarray*}
        (1+\gamma L_{\max}) L_{\gamma}
        &\overset{\eqref{eq:asdasfbfg}}{=}& \parens{1+\gamma \max_{i} \lambda_{\max}(\mA_i)} \frac{1}{\gamma} \lambda_{\max}\left(\frac{1}{n} \sum_{i=1}^n \left(\mI - \left(\gamma \mA_i + \mI\right)^{-1}\right)\right) \\
        &\approx& \frac{1}{\gamma} \lambda_{\max}\left(\frac{1}{n} \sum_{i=1}^n \gamma \mA_i\right) = \lambda_{\max}\left(\frac{1}{n} \sum_{i=1}^n \mA_i\right) = L.
    \end{eqnarray*}
    More formally, we have
    \begin{align*}
        \mI - \left(\mI + \gamma\mA_i\right)^{-1} = \gamma\mA_i - \gamma^2\mA_i^2 + \gamma^3\mA_i^3 + \dots = \sum_{k=1}^{\infty} (-1)^{k-1} \gamma^k\mA_i^k,
    \end{align*}
    so that
    \begin{align*}
        (1+\gamma L_{\max}) L_{\gamma} = \left(\frac{1}{\gamma}+ \max_{i} \lambda_{\max}(\mA_i)\right) \lambda_{\max}\left(\frac{1}{n} \sum_{i=1}^n \sum_{k=1}^{\infty} (-1)^{k-1} \gamma^k\mA_i^k\right).
    \end{align*}
    By applying the dominated convergence theorem to exchange the limit operation and summation, we obtain
    \begin{eqnarray*}
        \lim_{\gamma \to 0} (1+\gamma L_{\max}) L_{\gamma}
        &=& \lim_{\gamma \to 0} \parens{\left(1 + \gamma \max_{i} \lambda_{\max}(\mA_i)\right) \lambda_{\max}\left(\frac{1}{n} \sum_{i=1}^n \sum_{k=1}^{\infty} (-1)^{k-1} \gamma^{k-1} \mA_i^k\right)} \\
        &=& \lambda_{\max}\left(\frac{1}{n} \sum_{i=1}^n \sum_{k=1}^{\infty}\lim_{\gamma \to 0} (-1)^{k-1} \gamma^{k-1}\mA_i^k\right) = \lambda_{\max}\left(\frac{1}{n} \sum_{i=1}^n \mA_i\right) = L,
    \end{eqnarray*}
    as required.

    Lastly, letting $\gamma\to 0$, we have $M_{f_i}^{\gamma}(x) \to f_i(x)$ for all $x$, and consequently $M^{\gamma}(x) \to f(x)$ (see, e.g., \citep{rockafellar1998variational}). Since Algorithm \ref{algorithm:fedexprox} is equivalent to \algname{GD} for minimizing $M^{\gamma}(x)$ with stepsize $\alpha\gamma = \nicefrac{1}{L_{\gamma}}$, it follows that its iterates converge to those of vanilla \algname{GD} with stepsize~$\nicefrac{1}{L}$.
\end{proof}

\newpage

\subsection{Tighter Analysis}

\subsubsection{Full Participation Case}

\paragraph{Iteration complexity.}
We first establish the convergence rate of the algorithm.

\begin{theorem}\label{thm:fedexprox_quad_iter}
    Let Assumption \ref{ass:inter} hold and let $x_k$ be the iterates of \algname{FedExProx} (Algorithm \ref{algorithm:fedexprox}) applied to problem \eqref{eq:problem_quad}. Then
    \begin{align*}
        \norm{x_{K}-\Pi(x_K)}^{2}
        \leq \parens{1 - \alpha \gamma \parens{2 - \alpha \gamma \lambda_{\max}(\gamma)} \lambda_{\min}^+(\gamma)}^K \norm{x_0-\Pi(x_0)}^2,
    \end{align*}
    where $\lambda_{\max}(\gamma)$ and $\lambda_{\min}^+(\gamma)$ are the largest and the smallest positive eigenvalues of the matrix $\mM \eqdef \frac{1}{\gamma} \parens{\mI - \frac{1}{n}\sum_{i=1}^{n}(\gamma \mA_{i}+\mI)^{-1}}$, respectively.
    
    The optimal choice of stepsize $\gamma$ and extrapolation parameter $\alpha$ is $\alpha\gamma = \frac{1}{\lambda_{\max}(\gamma)}$, in which case the rate becomes
    \begin{align*}
        \norm{x_{K}-\Pi(x_K)}^{2}
        \leq \parens{1 - \frac{\lambda_{\min}^+(\gamma)}{\lambda_{\max}(\gamma)}}^K \norm{x_0-\Pi(x_0)}^2.
    \end{align*}
    Hence, the number of iterations needed to reach an $\epsilon$-solution is
    \begin{align*}
        K \geq \frac{\lambda_{\max}(\gamma)}{\lambda_{\min}^+(\gamma)} \log\parens{\frac{\norm{x_0 - \Pi(x_0)}^2}{\epsilon}}.
    \end{align*}
\end{theorem}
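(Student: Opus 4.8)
The plan is to exploit the equivalence, established in Section~\ref{sec:fedexprox_sgd}, between \algname{FedExProx} and \algname{GD} applied to the quadratic function $M^{\gamma}$ with stepsize $\alpha\gamma$. First I would use \eqref{eq:nabla_mfi} together with Assumption~\ref{ass:inter} (which in the quadratic case provides a common $x_*$ with $\mA_i x_* = b_i$ for all $i$) to write the averaged gradient in closed form as $\nabla M^{\gamma}(x) = \mM(x - x_*)$ for any $x_* \in \cX_*$, where $\mM = \frac{1}{\gamma}\parens{\mI - \frac{1}{n}\sum_{i=1}^n(\gamma\mA_i+\mI)^{-1}}$ is the constant Hessian. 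Substituting into the update rule yields the linear recursion
$$x_{k+1} - x_* = (\mI - \alpha\gamma\mM)(x_k - x_*).$$

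The main obstacle is that the problem is only non-strongly convex: $\mM$ is positive semidefinite with a nontrivial kernel whenever the minimizer is non-unique, so one cannot directly contract $\norm{x_k - x_*}^2$. To handle this, I would project onto $(\ker\mA)^{\perp}$. The key structural fact is that $\ker(\mM) = \ker(\mA) = \bigcap_{i=1}^n \ker(\mA_i)$, which follows because both $\mM$ and $\mA$ are averages of PSD matrices whose $i$th summand shares the kernel of $\mA_i$ (the map $\lambda \mapsto \lambda/(1+\gamma\lambda)$ vanishes only at $\lambda = 0$). Letting $P \eqdef \mA^{\dagger}\mA$ be the orthogonal projector onto $(\ker\mA)^{\perp}$, formula \eqref{eq:projcetion} gives $x_k - \Pi(x_k) = P(x_k - x_*)$. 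Since $\mM$ maps $(\ker\mA)^{\perp}$ into itself and annihilates $\ker\mA$, we have $P\mM = \mM P = \mM$, so applying $P$ to the recursion and simplifying shows that the error along the orthogonal complement evolves autonomously:
$$x_{k+1} - \Pi(x_{k+1}) = (\mI - \alpha\gamma\mM)(x_k - \Pi(x_k)).$$

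With the dynamics decoupled, it remains to bound the operator norm of $\mI - \alpha\gamma\mM$ restricted to $(\ker\mA)^{\perp}$, where $\mM$ has eigenvalues in $[\lambda_{\min}^+(\gamma), \lambda_{\max}(\gamma)]$. For an eigenvalue $\lambda$, $(1-\alpha\gamma\lambda)^2 = 1 - g(\lambda)$ with $g(\lambda) \eqdef \alpha\gamma\lambda(2-\alpha\gamma\lambda)$ a concave quadratic in $\lambda$, so its minimum over the interval is attained at an endpoint. Checking both, $g(\lambda_{\min}^+(\gamma)) \ge \alpha\gamma\lambda_{\min}^+(\gamma)(2-\alpha\gamma\lambda_{\max}(\gamma))$ holds trivially, while $g(\lambda_{\max}(\gamma)) \ge \alpha\gamma\lambda_{\min}^+(\gamma)(2-\alpha\gamma\lambda_{\max}(\gamma))$ holds as long as $\alpha\gamma\lambda_{\max}(\gamma) \le 2$ (which covers the optimal regime). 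This yields the per-step contraction factor $1 - \alpha\gamma(2 - \alpha\gamma\lambda_{\max}(\gamma))\lambda_{\min}^+(\gamma)$, and unrolling over $K$ steps gives the first claim. Finally, I would specialize to $\alpha\gamma = 1/\lambda_{\max}(\gamma)$, which collapses the factor to $1 - \lambda_{\min}^+(\gamma)/\lambda_{\max}(\gamma)$, and convert the geometric rate into an iteration count by taking logarithms and using $\log(1/(1-t)) \ge t$.
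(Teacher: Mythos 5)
Your proposal is correct and follows essentially the same route as the paper's proof: both reduce the update to the linear iteration $x_{k+1}-\Pi(x_k) = (\mI-\alpha\gamma\mM)(x_k-\Pi(x_k))$, work on the subspace $\range(\mM)=(\ker\mA)^{\perp}$ (the paper via Lemmas~\ref{lemma:projections_SGD_q} and~\ref{lemma:lmin+eigen}, you via the kernel identity $\ker\mM=\ker\mA$ and the projector $P=\mA^{\dagger}\mA$), and your endpoint analysis of $g(\lambda)=\alpha\gamma\lambda(2-\alpha\gamma\lambda)$ under $\alpha\gamma\lambda_{\max}(\gamma)\leq 2$ is equivalent to the paper's matrix inequality $\mM^2\preceq\lambda_{\max}(\gamma)\mM$, yielding the identical contraction factor. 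The only minor difference is that you derive an exact autonomous recursion for $x_k-\Pi(x_k)$ from the commutation $P\mM=\mM P=\mM$, whereas the paper uses the nonexpansiveness bound $\norm{x_{k+1}-\Pi(x_{k+1})}\leq\norm{x_{k+1}-\Pi(x_k)}$ --- a marginally sharper but substantively equivalent step.
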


\begin{proof}[Proof of Theorem \ref{thm:fedexprox_quad_iter}]
    The update rule of \algname{FedExProx} can be written as
    \begin{align*}
        x_{k+1} & = x_{k} + \alpha \left(\frac{1}{S}\sum_{i=1}^{S}\prox_{\gamma f_i}(x_{k})-x_{k}\right) \\
        & \overset{\eqref{eq:quad_prox}}{=} x_{k} + \alpha \left(\frac{1}{S}\sum_{i=1}^{S}(\gamma \mA_i + \mI)^{-1}(x_{k}+\gamma b_i)-x_{k}\right) \\
        & = x_{k}+ \alpha\left(\frac{1}{S}\sum_{i=1}^{S}((\gamma \mA_i + \mI)^{-1}-\mI)\right)x_{k} - \alpha\left(\frac{1}{S}\sum_{i=1}^{S}((\gamma \mA_i + \mI)^{-1}-\mI)\right) \Pi(x_{k}),
    \end{align*}
    where we use the fact that $\mA_{i}x_* = b_{i}$ for all $x_*\in\cX_*$. Therefore, using symmetry of $\mA_{i}$
    \begin{align*}
        \norm{x_{k+1}-\Pi(x_{k+1})}^{2} &\leq \norm{x_{k+1}-\Pi(x_{k})}^{2} \\
        & = \norm{\left((1-\alpha)\mI + \alpha\frac{1}{n}\sum_{i=1}^{n}(\gamma \mA_{i}+\mI)^{-1}\right)(x_{k}-\Pi(x_{k}))}^{2} \\
        & = \norm{\left(\mI- \alpha \underbrace{\left(\frac{1}{n} \sum_{i=1}^{n} (\mI - (\gamma \mA_{i} + \mI)^{-1})\right)}_{\eqdef \gamma\mM}\right)(x_{k}-\Pi(x_{k}))}^{2} \\
        & = (x_{k}-\Pi(x_{k}))^{T} \parens{\mI- \alpha\gamma \mM}^{T} \parens{\mI- \alpha\gamma \mM} (x_{k}-\Pi(x_{k}))\\
        & =(x_{k}-\Pi(x_{k}))^{T}\left(\mI - 2\alpha\gamma \mM + \alpha^{2}\gamma^{2} \mM^{2}\right)(x_{k}-\Pi(x_{k})).
    \end{align*}
    Since $\mM^{2} \preceq \lambda_{\max}(\gamma) \mM$, it follows that
    \begin{align*}
        \norm{x_{k+1}-\Pi(x_{k+1})}^{2} & \leq (x_{k}-\Pi(x_{k}))^{T}\left(\mI - 2\alpha\gamma \mM + \lambda_{\max}(\gamma) \alpha^{2}\gamma^{2} \mM\right)(x_{k}-\Pi(x_{k})) \\
        & = \norm{x_{k}-\Pi(x_{k})}^{2} - \alpha\gamma(2-\lambda_{\max}(\gamma)\alpha\gamma) (x_{k}-\Pi(x_{k}))^{T} \mM (x_{k}-\Pi(x_{k})) \\
        & \leq \norm{x_{k}-\Pi(x_{k})}^{2} - \alpha\gamma(2-\lambda_{\max}(\gamma)\alpha\gamma) \lambda_{\min}^{+}(\gamma) \norm{x_{k}-\Pi(x_{k})}^{2}
    \end{align*}
    for any $\alpha\gamma$ such that $2-\lambda_{\max}(\gamma)\alpha\gamma \geq 0$, where the last inequality follows from the fact that $x_{k}-\Pi(x_{k}) \in \range(\mM)$ (see Lemma \ref{lemma:projections_SGD_q}).
    Consequently,
    \begin{align*}
        \norm{x_{k+1}-\Pi(x_{k+1})}^{2} & \leq \left(1 - \alpha\gamma(2 -\alpha\gamma \lambda_{\max}(\gamma)) \lambda_{\min}^{+}(\gamma)\right)\norm{x_{k}-\Pi(x_{k})}^{2}.
    \end{align*}
    It remains to unroll the recurrence and substitute the optimal step size $\alpha\gamma = \frac{1}{\lambda_{\max}(\gamma)}$ to obtain the rate
    \begin{align*}
        \norm{x_{k+1}-\Pi(x_{k+1})}^{2} & \leq \left(1 - \frac{\lambda_{\min}^{+}(\gamma)}{\lambda_{\max}(\gamma)}\right)\norm{x_{k}-\Pi(x_{k})}^{2}.
    \end{align*}
\end{proof}

\FEDEXPROXQITERTIGHT*

\begin{proof}
    The result follows directly from Theorem \ref{thm:fedexprox_quad_iter}, smoothness of $f$, and Fact \ref{fact:hess_m} by substituting $\lambda_{\max}(\gamma) = \lambda_{\max} (\nabla^2 M^{\gamma}) = L_{\gamma}$ and $\lambda_{\min}^+(\gamma) = \lambda_{\min}^+(\nabla^2 M^{\gamma}) = \mu^{+}_{\gamma}$.
\end{proof}

\begin{remark}
    Recall from the proof of Fact \ref{fact:m_eigen_evals} that
    \begin{align*}
        \mM = \frac{1}{n}\sum_{i=1}^{n} \mQ_i \left[\frac{\lambda_{j}(\mA_{i})}{1 + \gamma \lambda_{j}(\mA_{i})}\right]_{jj} \mQ_i^\top,
    \end{align*}
    where $\frac{\lambda_{j}(\mA_{i})}{1 + \gamma \lambda_{j}(\mA_{i})} \in \left[\frac{\lambda_{j}(\mA_{i})}{2}, \lambda_{j}(\mA_{i})\right]$ if $\gamma \leq \frac{1}{\lambda_{j}(\mA_{i})}$. Consequently, for $\gamma \leq \frac{1}{\max_{i\in[n]} \lambda_{\max}(\mA_{i})}$, we have
    \begin{align*}
        \mM = \frac{1}{n}\sum_{i=1}^{n} \mQ_i \left[\frac{\lambda_{j}(\mA_{i})}{1 + \gamma \lambda_{j}(\mA_{i})}\right]_{jj} \mQ_i^\top
        \preceq \frac{1}{n}\sum_{i=1}^{n} \mQ_i \left[\lambda_{j}(\mA_{i})\right]_{jj} \mQ_i^\top
        = \frac{1}{n}\sum_{i=1}^{n} \mA_{i}
        = \mA,
    \end{align*}
    and analogously, $\mM \succeq \frac{1}{2} \mA$. Using \eqref{eq:lmax_eig} and \eqref{eq:lmin+_eig} gives
    \begin{align*}
        \frac{1}{2} \lambda_{\max}(\mA) = \frac{1}{2} \max_{\norm{x} \leq 1}  x^\top \mA x
        \leq \lambda_{\max}(\gamma) = \max_{\norm{x} \leq 1}  x^\top \mM x
        \leq \max_{\norm{x} \leq 1}  x^\top \mA x = \lambda_{\max}(\mA),
    \end{align*}
    and similarly
    \begin{align*}
        \frac{1}{2} \lambda_{\min}^{+}(\mA) \leq \lambda_{\min}^{+}(\gamma) = \min_{\norm{x} = 1, x \in (\ker \mA)^{\perp}}  x^\top \mM x \leq \lambda_{\min}^{+}(\mA).
    \end{align*}
    Noting that $L_{\gamma} = \lambda_{\max}(\gamma)$, $\mu^{+}_{\gamma} = \lambda_{\min}^+(\gamma)$, $L = \lambda_{\max}(\mA)$ and $\mu^+ = \lambda_{\min}^{+}(\mA)$, it follows that the rates~\eqref{eq:tighter_quad} of \algname{FedExProx} and~\eqref{eq:yuveVq} of \algname{GD} coincide up to a constant factor when $\gamma$ is sufficiently small. This should come as no surprise, as Theorem~\ref{thm:pessimistic} shows that \algname{FedExProx} effectively reduces to \algname{GD} as $\gamma \to 0$.
\end{remark}

\paragraph{Time complexity.}\label{sec:fedexprox_q_time}

Let us now prove the result from Section \ref{sec:time_cond}.
We assume that each worker $i$ computes the proximal operator using an iterative method, with the running time proportional to the condition number of the subproblem. Given that $\lambda_{\min}(\mA_{i}) = 0$ for all $i\in[n]$ (otherwise the problem becomes trivial), the time required for all clients to compute $\prox_{\gamma f_i}(x_k)$ at each global iteration $k$ is $$\tau \times \left(1+\gamma\max_{i \in [n]}\lambda_{\max}(\mA_{i})\right)$$ seconds, where $\tau$ is the time per one iteration of solving the subproblem. Then, it takes $\ct$ seconds to aggregate the updates at the server and move on to the next step. Since, according to Theorem \ref{thm:fedexprox_quad_iter}, the number of iterations needed to reach an $\epsilon$-solution is
\begin{align*}
    K = \frac{\lambda_{\max}(\gamma)}{\lambda_{\min}^+(\gamma)} \log\parens{\frac{\norm{x_0 - \Pi(x_0)}^2}{\epsilon}},
\end{align*}
the total time required to solve the global problem is
\begin{align}\label{eq:fedexprox_quad_t}
    T_{\ct}(\gamma) \eqdef \frac{\lambda_{\max}(\gamma)}{\lambda_{\min}^{+}(\gamma)} \log\parens{\frac{\norm{x_0 - \Pi(x_0)}^2}{\epsilon}} \times \left(\ct + \tau \times \left(1+\gamma\max_{i \in [n]}\lambda_{\max}(\mA_{i})\right)\right).
\end{align}

\FEDEXPROXQTIME*
\begin{proof}
    Recall that $\lambda_{\max}(\gamma)$ and $\lambda_{\min}^{+}(\gamma)$ are the eigenvalues of the matrix
    \begin{align*}
        \mM = \frac{1}{n}\sum_{i=1}^{n} \frac{1}{\gamma} \parens{\mI-(\gamma \mA_{i}+\mI)^{-1}}
        \overset{\eqref{fact:m_eigen_evals}}{=} \frac{1}{n}\sum_{i=1}^{n} \mQ_i \left[\frac{\lambda_{j}(\mA_{i})}{1 + \gamma \lambda_{j}(\mA_{i})}\right]_{jj} \mQ_i^\top.
    \end{align*}
    Since if $\gamma \geq \frac{1}{\lambda_{j}(\mA_{i})}$ and $\lambda_{j}(\mA_{i}) > 0$, then $\frac{\gamma \lambda_{j}(\mA_{i})}{1 + \gamma \lambda_{j}(\mA_{i})} \in \left[\frac{1}{2}, 1\right)$, and if $\gamma \leq \frac{1}{\lambda_{j}(\mA_{i})}$, then $\frac{\gamma \lambda_{j}(\mA_{i})}{1 + \gamma \lambda_{j}(\mA_{i})} \in \left[\frac{\gamma \lambda_{j}(\mA_{i})}{2}, \gamma \lambda_{j}(\mA_{i})\right)$, it follows that
    \begin{align}
        \label{eq:yuveVq}
        \frac{\lambda_{j}(\mA_{i})}{1 + \gamma \lambda_{j}(\mA_{i})} = \begin{cases}
            \Theta\left(\frac{1}{\gamma}\right),   & \gamma \geq \frac{1}{\lambda_{j}(\mA_{i})}, \\
            \Theta\left(\lambda_{j}(\mA_{i})\right), & \gamma < \frac{1}{\lambda_{j}(\mA_{i})}.
        \end{cases}
    \end{align}
    Moreover, the identities \eqref{eq:lmax_eig} and \eqref{eq:lmin+_eig} from Fact \ref{fact:m_eigen_evals} tell us that
    \begin{align*}
        \lambda_{\max}(\gamma) = \max_{\norm{x} \leq 1} \frac{1}{n}\sum_{i=1}^{n} x^\top \mQ_i \left[\frac{\lambda_{j}(\mA_{i})}{1 + \gamma \lambda_{j}(\mA_{i})}\right]_{jj} \mQ_i^\top x
    \end{align*}
    and
    \begin{align*}
        \lambda_{\min}^{+}(\gamma) = \min_{\norm{x} = 1, x \in \ker(\mA)^{\perp}} \frac{1}{n}\sum_{i=1}^{n} x^\top \mQ_i \left[\frac{\lambda_{j}(\mA_{i})}{1 + \gamma \lambda_{j}(\mA_{i})}\right]_{jj} \mQ_i^\top x,
    \end{align*}
    where $\mA \eqdef \frac{1}{n} \sum_{i=1}^{n} \mA_i$.
    Due to \eqref{eq:lmax_eig}, \eqref{eq:lmin+_eig} and \eqref{eq:yuveVq}, the function $T_{\ct}(\gamma)$ is approximately constant for all $\gamma \leq \frac{1}{\max_{i\in[n]} \lambda_{\max}(\mA_{i})}.$ If $\gamma \geq \frac{\frac{\ct}{\tau}-1}{\max_{i\in[n]} \lambda_{\max}(\mA_{i})}$ then $\tau (1+\gamma \max_{i\in[n]} \lambda_{\max}(\mA_{i})) \geq \ct$, and consequently,
    \begin{eqnarray*}
        T_{\ct}(\gamma) &=& \frac{\lambda_{\max}(\gamma)}{\lambda_{\min}^{+}(\gamma)} \log\parens{\frac{\norm{x_0 - \Pi(x_0)}^2}{\epsilon}} \times \left(\ct + \tau \left(1+\gamma\max_{i \in [n]}\lambda_{\max}(\mA_{i})\right)\right) \\
        &\leq& 2\tau \frac{\lambda_{\max}(\gamma)}{\lambda_{\min}^{+}(\gamma)} \log\parens{\frac{\norm{x_0 - \Pi(x_0)}^2}{\epsilon}} \times \left(1+\gamma\max_{i \in [n]}\lambda_{\max}(\mA_{i})\right) \eqdef 2 \bar{g}(\gamma).
    \end{eqnarray*}
    Hence, $\bar{g}(\gamma) \leq T_{\ct}(\gamma) \leq 2 \bar{g}(\gamma)$, and the term to be minimized is
    \begin{eqnarray*}
        \bar{T}_{\ct}(\gamma) &\eqdef& \frac{\lambda_{\max}(\gamma)}{\lambda_{\min}^{+}(\gamma)} \times \left(1+\gamma\max_{i \in [n]}\lambda_{\max}(\mA_{i})\right).
    \end{eqnarray*}
    Using \eqref{eq:lmax_eig} and \eqref{eq:lmin+_eig}, this can be written as
    \begin{align*}
        \bar{T}_{\ct}(\gamma) = \frac{\max_{\norm{x} \leq 1} \frac{1}{n}\sum_{i=1}^{n} x^\top \mQ_i \left[\frac{1+\gamma\max_{i \in [n]}\lambda_{\max}(\mA_{i})}{1 + \gamma \lambda_{j}(\mA_{i})} \lambda_{j}(\mA_{i})\right]_{jj} \mQ_i^\top x}{\min_{\norm{x} = 1, x \in \ker(\mA)^{\perp}} \frac{1}{n}\sum_{i=1}^{n} x^\top \mQ_i \left[\frac{\lambda_{j}(\mA_{i})}{1 + \gamma \lambda_{j}(\mA_{i})}\right]_{jj} \mQ_i^\top x},
    \end{align*}
    where the numerator is non-decreasing and the denominator is non-increasing as a function of $\gamma$. It follows that $\bar{g}(\gamma)$ is non-decreasing for $\gamma \geq \frac{\frac{\ct}{\tau}-1}{\max_{i \in [n]}\lambda_{\max}(\mA_{i})}$. Hence, when $\frac{\ct}{\tau}\geq2$, the optimal (up to a multiplicative factor) $\gamma$ belongs to the interval
    \begin{align}\label{eq:range1}
        \left[\frac{1}{\max_{i \in [n]}\lambda_{\max}(\mA_{i})}, \frac{\frac{\ct}{\tau}-1}{\max_{i \in [n]}\lambda_{\max}(\mA_{i})}\right].
    \end{align}
    When $\frac{\ct}{\tau} < 2$, the time-complexity is non-decreasing for $\gamma \geq \frac{\frac{\ct}{\tau}-1}{\max_{i\in[n]} \lambda_{\max}(\mA_{i})}$ and approximately constant otherwise, so the choice
    \begin{align}\label{eq:range2}
        \gamma \in \left[0, \max\brac{0,\frac{\frac{\ct}{\tau}-1}{\max_{i\in[n]} \lambda_{\max}(\mA_{i})}}\right]
    \end{align}
    is optimal (again, up to a constant factor).

    Now, suppose that $\gamma \geq \frac{1}{\min_{i\in[n]}\lambda_{\min}^+(\mA_i)}$. In this case, the diagonal entries of the matrix $\left[\frac{\gamma \lambda_{j}(\mA_{i})}{1 + \gamma \lambda_{j}(\mA_{i})}\right]_{jj}$ are either to $0$ (when $\lambda_{j}(\mA_{i})=0$) or lie within the interval $\left[\frac{1}{2}, 1\right)$. Therefore, the ratio
    \begin{align*}
        \frac{\lambda_{\max}(\gamma)}{\lambda_{\min}^{+}(\gamma)}
        = \frac{\max_{\norm{x} \leq 1} \frac{1}{n}\sum_{i=1}^{n} x^\top \mQ_i \left[\frac{\gamma \lambda_{j}(\mA_{i})}{1 + \gamma \lambda_{j}(\mA_{i})}\right]_{jj} \mQ_i^\top x}{\min_{\norm{x} = 1, x \in \ker(\mA)^{\perp}} \frac{1}{n}\sum_{i=1}^{n} x^\top \mQ_i \left[\frac{\gamma\lambda_{j}(\mA_{i})}{1 + \gamma \lambda_{j}(\mA_{i})}\right]_{jj} \mQ_i^\top x}
    \end{align*}
    is approximately constant in $\gamma$. Consequently, the time
    \begin{align*}
        T_{\ct}(\gamma) \eqdef \frac{\lambda_{\max}(\gamma)}{\lambda_{\min}^{+}(\gamma)} \log\parens{\frac{\norm{x_0 - \Pi(x_0)}^2}{\epsilon}} \times \left(\ct + \tau \left(1+\gamma\max_{i \in [n]}\lambda_{\max}(\mA_{i})\right)\right)
    \end{align*}
    is an increasing function of $\gamma$ for $\gamma \geq \frac{1}{\min_{i\in[n]}\lambda_{\min}^+(\mA_i)}$, and the optimal range for $\gamma$ can be tightened from the previously established bounds \eqref{eq:range1} and \eqref{eq:range2} to
    \begin{align*}
        \left[\frac{1}{\max_{i \in [n]}\lambda_{\max}(\mA_{i})}, \min\brac{\frac{\frac{\ct}{\tau}-1}{\max_{i \in [n]}\lambda_{\max}(\mA_{i})}, \frac{1}{\min_{i\in[n]}\lambda_{\min}^+(\mA_i)}}\right]
    \end{align*}
    when $\frac{\ct}{\tau}\geq2$ and
    \begin{align*}
        \gamma \in \left[0, \max\brac{0, \min\brac{\frac{\frac{\ct}{\tau}-1}{\max_{i \in [n]}\lambda_{\max}(\mA_{i})}, \frac{1}{\min_{i\in[n]}\lambda_{\min}^+(\mA_i)}}}\right]
    \end{align*}
    when $\frac{\ct}{\tau} < 2$.
\end{proof}

\subsubsection{Partial Participation Case}

\begin{algorithm}[t]
    \caption{\algname{FedExProx} with partial participation}
    \label{algorithm:batch_fedexprox}
    \begin{algorithmic}[1]
    \STATE \textbf{Parameters:} stepsize $\gamma > 0$, extrapolation parameter $\alpha_k>0$, starting point $x_0 \in \R^d$, batch size $S$
    \FOR{$k = 0, 1, 2, \dots$}
        \STATE Sample a minibatch $\cS_k \subseteq [n]$ uniformly from all subsets of cardinality $S$
        \STATE $x_{k+1} = x_k + \alpha_k \parens{\frac{1}{S} \sum_{i\in\cS_k} \prox_{\gamma f_i} (x_k) - x_k}$
    \ENDFOR
    \end{algorithmic}
\end{algorithm}

\paragraph{Iteration complexity.}
We again start with establishing the iteration complexity of the algorithm.

\begin{theorem}\label{thm:fedexprox_quad_stoch_iter}
    Let Assumption \ref{ass:inter} hold and let $x_k$ be the iterates of minibatch \algname{FedExProx} (Algorithm~\ref{algorithm:batch_fedexprox}) applied to problem \eqref{eq:problem_quad}. Then
    \begin{align*}
        \Exp{\norm{x_{K}-\Pi(x_{K})}^{2}}
        \leq \parens{1 - \alpha \gamma \parens{2 - \alpha \gamma L_{\gamma,S}} \lambda_{\min}^+(\gamma)}^K \norm{x_0-\Pi(x_0)}^2,
    \end{align*}
    where $\lambda_{\min}^+(\gamma)$ is the smallest positive eigenvalue of the matrix $\mM \eqdef \frac{1}{\gamma} \parens{\mI - \frac{1}{n}\sum_{i=1}^{n}(\gamma \mA_{i}+\mI)^{-1}}$ and $L_{\gamma,S} \eqdef \frac{n-S}{S(n-1)} \frac{\max_{i\in[n]}\lambda_{\max}(\mA_{i})}{1+\gamma\max_{i\in[n]}\lambda_{\max}(\mA_{i})} + \frac{n(S-1)}{S(n-1)} \lambda_{\max}(\gamma)$.
    
    The optimal choice of stepsize $\gamma$ and extrapolation parameter $\alpha$ is $\alpha\gamma = \frac{1}{L_{\gamma,S}}$, in which case the rate becomes
    \begin{align*}
        \Exp{\norm{x_{K}-\Pi(x_{K})}^{2}}
        \leq \parens{1 - \frac{\lambda_{\min}^+(\gamma)}{L_{\gamma,S}}}^K \norm{x_0-\Pi(x_0)}^2.
    \end{align*}
    Hence, the number of iterations needed to reach an $\epsilon$-solution is
    \begin{align}\label{eq:fedexprox_q_iter}
        K \geq \frac{L_{\gamma,S}}{\lambda_{\min}^+(\gamma)} \log\parens{\frac{\norm{x_0 - \Pi(x_0)}^2}{\epsilon}}.
    \end{align}
\end{theorem}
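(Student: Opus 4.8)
The plan is to mirror the full-participation argument of Theorem~\ref{thm:fedexprox_quad_iter}, inserting an expectation over the random minibatch $\cS_k$ at the decisive step. First I would use the closed form \eqref{eq:quad_prox} of the quadratic proximal operator together with the interpolation identity $b_i = \mA_i x_*$ to rewrite one step of Algorithm~\ref{algorithm:batch_fedexprox} as
\begin{align*}
    x_{k+1} - \Pi(x_k) = \parens{\mI - \alpha\gamma\, \mM_{\cS_k}} (x_k - \Pi(x_k)), \qquad \mM_{\cS_k} \eqdef \frac{1}{S}\sum_{i\in\cS_k} \mB_i,
\end{align*}
where $\mB_i \eqdef \frac{1}{\gamma}\parens{\mI - (\gamma\mA_i + \mI)^{-1}}$ is symmetric positive semidefinite and $\mM = \frac{1}{n}\sum_{i=1}^n \mB_i$. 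Since $\Pi(x_{k+1})$ is the nearest point of $\cX_*$ to $x_{k+1}$, I would bound $\norm{x_{k+1}-\Pi(x_{k+1})}^2 \leq \norm{x_{k+1}-\Pi(x_k)}^2$ and expand the right-hand side into a quadratic form governed by $\parens{\mI - \alpha\gamma\mM_{\cS_k}}^\top\parens{\mI - \alpha\gamma\mM_{\cS_k}}$.

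The heart of the proof is the conditional expectation over the $S$-nice sampling. Unbiasedness gives $\Exp{\mM_{\cS_k}} = \mM$, so the only nontrivial term is the second moment $\Exp{\mM_{\cS_k}^2}$ (using symmetry, $\mM_{\cS_k}^\top\mM_{\cS_k} = \mM_{\cS_k}^2$), which I would evaluate via Lemma~\ref{lemma:nice_m} applied with the above $\mB_i$:
\begin{align*}
    \Exp{\mM_{\cS_k}^2} = \frac{n-S}{S(n-1)} \frac{1}{n}\sum_{i=1}^n \mB_i^2 + \frac{n(S-1)}{S(n-1)} \mM^2.
\end{align*}
I then need the Loewner bound $\Exp{\mM_{\cS_k}^2} \preceq L_{\gamma,S}\,\mM$. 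For the second term I reuse $\mM^2 \preceq \lambda_{\max}(\gamma)\mM$, exactly as in the deterministic proof. For the first term, the crucial observation is that every eigenvalue of $\mB_i$ equals $\frac{\lambda_j(\mA_i)}{1+\gamma\lambda_j(\mA_i)}$; since $t \mapsto \frac{t}{1+\gamma t}$ is increasing, $\lambda_{\max}(\mB_i) \leq \frac{\max_{i}\lambda_{\max}(\mA_i)}{1+\gamma\max_{i}\lambda_{\max}(\mA_i)}$ uniformly in $i$. As each $\mB_i \succeq 0$, this yields $\mB_i^2 \preceq \frac{\max_{i}\lambda_{\max}(\mA_i)}{1+\gamma\max_{i}\lambda_{\max}(\mA_i)}\,\mB_i$ and hence $\frac{1}{n}\sum_i \mB_i^2 \preceq \frac{\max_{i}\lambda_{\max}(\mA_i)}{1+\gamma\max_{i}\lambda_{\max}(\mA_i)}\,\mM$. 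Summing the two contributions reproduces precisely the constant $L_{\gamma,S}$ from the theorem statement.

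With this in hand, for any $\alpha\gamma$ with $2 - \alpha\gamma L_{\gamma,S} \geq 0$ I obtain $\Exp{\parens{\mI - \alpha\gamma\mM_{\cS_k}}^2} \preceq \mI - \alpha\gamma\parens{2 - \alpha\gamma L_{\gamma,S}}\mM$. Substituting into the expanded quadratic form and invoking $x_k - \Pi(x_k) \in \range(\mM) = (\ker\mA)^{\perp}$ (Lemma~\ref{lemma:projections_SGD_q}), so that $(x_k-\Pi(x_k))^\top \mM (x_k-\Pi(x_k)) \geq \lambda_{\min}^+(\gamma)\norm{x_k-\Pi(x_k)}^2$, gives the one-step contraction
\begin{align*}
    \ExpCond{\norm{x_{k+1}-\Pi(x_{k+1})}^2}{x_k} \leq \parens{1 - \alpha\gamma\parens{2 - \alpha\gamma L_{\gamma,S}}\lambda_{\min}^+(\gamma)}\norm{x_k - \Pi(x_k)}^2.
\end{align*}
Taking total expectation and unrolling yields the stated rate; maximizing $\alpha\gamma(2-\alpha\gamma L_{\gamma,S})$ over $\alpha\gamma$ gives $\alpha\gamma = \nicefrac{1}{L_{\gamma,S}}$, the contraction factor $1 - \nicefrac{\lambda_{\min}^+(\gamma)}{L_{\gamma,S}}$, and finally the iteration-complexity bound \eqref{eq:fedexprox_q_iter}. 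The main obstacle is the matrix second-moment estimate: applying Lemma~\ref{lemma:nice_m} and establishing $\frac{1}{n}\sum_i \mB_i^2 \preceq \frac{\max_i\lambda_{\max}(\mA_i)}{1+\gamma\max_i\lambda_{\max}(\mA_i)}\,\mM$ through the uniform eigenvalue bound on the $\mB_i$; once that Loewner inequality is secured, the rest is a routine adaptation of the full-participation argument. A secondary technical point is confirming that the iterates stay in $\range(\mM)$ along the whole (now random) trajectory, which keeps the $\lambda_{\min}^+(\gamma)$ bound valid at every step.
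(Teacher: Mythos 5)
Your proposal is correct and takes essentially the same route as the paper's own proof: the identical reduction $x_{k+1}-\Pi(x_k) = \parens{\mI-\alpha\gamma\mM_k}(x_k-\Pi(x_k))$, the same application of Lemma~\ref{lemma:nice_m} with $\mB_i = \frac{1}{\gamma}\parens{\mI-(\gamma\mA_i+\mI)^{-1}}$ to the second moment, the same two Loewner bounds ($\mM^2 \preceq \lambda_{\max}(\gamma)\mM$ and $\frac{1}{n}\sum_i \mB_i^2 \preceq \frac{\max_i \lambda_{\max}(\mA_i)}{1+\gamma\max_i\lambda_{\max}(\mA_i)}\mM$) assembling exactly $L_{\gamma,S}$, and the same invocation of Lemma~\ref{lemma:projections_SGD_q} to justify the $\lambda_{\min}^+(\gamma)$ lower bound before unrolling. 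Your closing worry about the range condition along the random trajectory is harmless, since Lemma~\ref{lemma:projections_SGD_q} gives $x-\Pi(x)\in\range(\mM)$ pointwise from the explicit form of $\Pi$, independently of how the iterate was generated.
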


\begin{remark}
    Similar to the proof of Theorem \ref{thm:quad}, the result of Theorem \ref{thm:quad_iter_stoch} follows directly from Theorem \ref{thm:fedexprox_quad_stoch_iter}, smoothness of $f$, and Fact \ref{fact:hess_m} by noting that
    $\lambda_{\max}(\gamma) = \lambda_{\max} (\nabla^2 M^{\gamma}) = L_{\gamma}$ and $\lambda_{\min}^+(\gamma) = \lambda_{\min}^+(\nabla^2 M^{\gamma}) = \mu^{+}_{\gamma}$.
\end{remark}

\begin{proof}[Proof of Theorem \ref{thm:fedexprox_quad_stoch_iter}]
    The update rule of \algname{FedExProx} can be written as
    \begin{align*}
        x_{k+1} & = x_{k} + \alpha \left(\frac{1}{S}\sum_{i\in\cS_k} \prox_{\gamma f_i}(x_{k})-x_{k}\right) \\
        & = x_{k} + \alpha \left(\frac{1}{S}\sum_{i\in\cS_k} (\gamma \mA_i + \mI)^{-1}(x_{k}+\gamma b_i)-x_{k}\right) \\
        & = x_{k}+ \alpha\left(\frac{1}{S}\sum_{i\in\cS_k} ((\gamma \mA_i + \mI)^{-1}-\mI)\right)x_{k} - \alpha\left(\frac{1}{S}\sum_{i\in\cS_k} ((\gamma \mA_i + \mI)^{-1}-\mI)\right) \Pi(x_k),
    \end{align*}
    where we use the fact that $\mA_{i}x_* = b_{i}$ for all $x_*\in\cX_*$. Therefore
    \begin{eqnarray}\label{eq:adhbewsgav}
        &&\hspace{-0.4cm}\norm{x_{k+1}-\Pi(x_{k+1})}^{2} \nonumber \\
        &\leq& \norm{x_{k+1}-\Pi(x_k)}^{2} \nonumber \\
        &=& \norm{\left((1-\alpha)\mI + \alpha\frac{1}{S}\sum_{i\in\cS_k} (\gamma \mA_i+\mI)^{-1}\right)(x_{k}-\Pi(x_k))}^{2} \nonumber \\
        &=& \norm{x_{k}-\Pi(x_k)}^2
        - 2 \alpha (x_{k}-\Pi(x_k))^{T} \parens{\mI - \frac{1}{S}\sum_{i\in\cS_k} (\gamma \mA_i+\mI)^{-1}} (x_{k}-\Pi(x_k)) \nonumber \\
        && + \alpha^2 (x_{k}-\Pi(x_k))^{T}\parens{\mI - \frac{1}{S}\sum_{i\in\cS_k} (\gamma \mA_i+\mI)^{-1}}^{T} \parens{\mI - \frac{1}{S}\sum_{i\in\cS_k} (\gamma \mA_i+\mI)^{-1}} (x_{k}-\Pi(x_k)) \nonumber \\
        &=& \norm{x_{k}-\Pi(x_k)}^2 - 2 \alpha \gamma (x_{k}-\Pi(x_k))^{T} \mM_k (x_{k}-\Pi(x_k)) \nonumber \\
        &&+ \alpha^2 \gamma^2 (x_{k}-\Pi(x_k))^{T} \mM_k^{T} \mM_k (x_{k}-\Pi(x_k)),
    \end{eqnarray}
    where $\mM_k \eqdef \frac{1}{\gamma} \parens{\mI - \frac{1}{S}\sum_{i\in\cS_k} (\gamma \mA_i+\mI)^{-1}}$.
    Next, using the fact that $\mM^2 \preceq \lambda_{\max}(\gamma) \mM$ and applying Lemma \ref{lemma:nice_m} with $\mB_i = \frac{1}{\gamma} \parens{\mI - (\gamma \mA_i+\mI)^{-1}}$, the expectation of $\mM_k^{T} \mM_k$ is
    \begin{eqnarray}\label{eq:exp_mk}
        \Exp{\mM_k^{T} \mM_k}
        &=& \frac{1}{\gamma^2} \frac{n-S}{S(n-1)} \frac{1}{n} \sum_{i=1}^n \parens{\mI - (\gamma \mA_i+\mI)^{-1}}^\top \parens{\mI - (\gamma \mA_i+\mI)^{-1}} + \frac{n(S-1)}{S(n-1)} \mM^\top \mM \nonumber \\
        &=& \frac{1}{\gamma^2} \frac{n-S}{S(n-1)} \frac{1}{n} \sum_{i=1}^n \parens{\mI - (\gamma \mA_i+\mI)^{-1}}^2 + \frac{n(S-1)}{S(n-1)} \mM^2 \nonumber \\
        &\leq& \frac{1}{\gamma} \frac{n-S}{S(n-1)} \max_{i\in[n]} \lambda_{\max}\parens{\mI - (\gamma \mA_i+\mI)^{-1}} \frac{1}{n} \sum_{i=1}^n \frac{1}{\gamma} \parens{\mI - (\gamma \mA_i+\mI)^{-1}} \nonumber \\
        &&+ \frac{n(S-1)}{S(n-1)} \lambda_{\max}(\gamma) \mM \nonumber \\
        &=& \frac{1}{\gamma} \frac{n-S}{S(n-1)} \max_{i\in[n]} \parens{\frac{\gamma \lambda_{\max}(\mA_{i})}{1+\gamma\lambda_{\max}(\mA_{i})}} \mM + \frac{n(S-1)}{S(n-1)} \lambda_{\max}(\gamma) \mM \nonumber \\
        &=& \underbrace{\parens{\frac{n-S}{S(n-1)} \frac{\max_{i\in[n]}\lambda_{\max}(\mA_{i})}{1+\gamma\max_{i\in[n]}\lambda_{\max}(\mA_{i})} + \frac{n(S-1)}{S(n-1)} \lambda_{\max}(\gamma)}}_{\eqdef L_{\gamma,S}} \mM.
    \end{eqnarray}
    Hence, taking expectation conditioned on $x_k$ in \eqref{eq:adhbewsgav} gives
    \begin{eqnarray*}
        &&\hspace{-1cm}\ExpSub{k}{\norm{x_{k+1}-\Pi(x_{k+1})}^{2}} \\
        &\leq& \norm{x_{k}-\Pi(x_k)}^2 + \alpha^2 \gamma^2 (x_{k}-\Pi(x_k))^{T} \ExpSub{k}{\mM_k^{T} \mM_k} (x_{k}-\Pi(x_k)) \\
        &&- 2 \alpha \gamma (x_{k}-\Pi(x_k))^{T} \ExpSub{k}{\mM_k} (x_{k}-\Pi(x_k)) \\
        &\overset{\eqref{eq:exp_mk}}{=}& \norm{x_{k}-\Pi(x_k)}^2 + \alpha^2 \gamma^2 L_{\gamma,S} (x_{k}-\Pi(x_k))^{T} \mM (x_{k}-\Pi(x_k)) \\
        &&- 2 \alpha \gamma (x_{k}-\Pi(x_k))^{T} \mM (x_{k}-\Pi(x_k)) \\
        &=& \norm{x_{k}-\Pi(x_k)}^2 - \alpha \gamma \parens{2 - \alpha \gamma L_{\gamma,S}} (x_{k}-\Pi(x_k))^{T} \mM (x_{k}-\Pi(x_k)) \\
        &\leq& \norm{x_{k}-\Pi(x_k)}^2 - \alpha \gamma \parens{2 - \alpha \gamma L_{\gamma,S}} \lambda_{\min}^+(\gamma) (x_{k}-\Pi(x_k))^{T} (x_{k}-\Pi(x_k)),
    \end{eqnarray*}
    where the last inequality follows from the fact that $x_{k}-\Pi(x_{k}) \in \range(\mM)$ (see Lemma \ref{lemma:projections_SGD_q}). Taking expectation again,
    \begin{eqnarray*}
        \Exp{\norm{x_{k+1}-\Pi(x_{k+1})}^{2}}
        \leq \parens{1 - \alpha \gamma \parens{2 - \alpha \gamma L_{\gamma,S}} \lambda_{\min}^+(\gamma)} \Exp{\norm{x_{k}-\Pi(x_k)}^2}.
    \end{eqnarray*}
    It remains to unroll the recurrence.
\end{proof}

\paragraph{Time complexity.}

Let us consider the same setup as in Section \ref{sec:fedexprox_q_time}, i.e., at each iteration $k$, each client computes a proximal operator $\prox_{\gamma f_i}(x_k)$ in at most $$\tau \times \left(1+\gamma\max_{i \in [n]}\lambda_{\max}(\mA_{i})\right)$$ seconds, where $\tau$ is the time per one iteration of solving the subproblem, and it takes $\ct$ seconds to aggregate the updates at the server and move on to the next step. 
According to Theorem \ref{thm:fedexprox_quad_stoch_iter}, the number of iterations needed to reach an $\epsilon$-solution is
\begin{align*}
    K = \frac{L_{\gamma,S}}{\lambda_{\min}^+(\gamma)} \log\parens{\frac{\norm{x_0 - \Pi(x_0)}^2}{\epsilon}},
\end{align*}
where $L_{\gamma,S} \eqdef \frac{n-S}{S(n-1)} \frac{\max_{i\in[n]}\lambda_{\max}(\mA_{i})}{1+\gamma\max_{i\in[n]}\lambda_{\max}(\mA_{i})} + \frac{n(S-1)}{S(n-1)} \lambda_{\max}(\gamma)$. Hence, the total time required to solve the global problem is at most
\begin{align}\label{eq:batch_fedexprox_quad_time}
    T_{\ct}(\gamma,S) \eqdef \frac{L_{\gamma,S}}{\lambda_{\min}^{+}(\gamma)} \log\parens{\frac{\norm{x_0 - \Pi(x_0)}^2}{\epsilon}} \times \parens{\ct + \tau \times \left(1+\gamma\max_{i \in [n]}\lambda_{\max}(\mA_{i})\right)}.
\end{align}

\FEDEXPROXQTIMESTOCH*
\begin{proof}
    Fix some $S\in[n]$. Then, we are interested in minimizing
    \begin{eqnarray*}
        \bar{T}_{\ct}(\gamma)
        &=& \frac{n-S}{S(n-1)} \underbrace{\frac{1}{\lambda_{\min}^{+}(\gamma)} \frac{\max_{i\in[n]}\lambda_{\max}(\mA_{i})}{1+\gamma\max_{i\in[n]}\lambda_{\max}(\mA_{i})} \parens{\ct + \tau \left(1+\gamma\max_{i \in [n]}\lambda_{\max}(\mA_{i})\right)}}_{\eqdef \bar{g}_1(\gamma)} \nonumber \\
        &&+ \frac{n(S-1)}{S(n-1)} \underbrace{\frac{\lambda_{\max}(\gamma)}{\lambda_{\min}^{+}(\gamma)} \parens{\ct + \tau \left(1+\gamma\max_{i \in [n]}\lambda_{\max}(\mA_{i})\right)}}_{\eqdef \bar{g}_2(\gamma)} \\
        &\eqdef& \frac{n-S}{S(n-1)} \bar{g}_1(\gamma) + \frac{n(S-1)}{S(n-1)} \bar{g}_2(\gamma).
    \end{eqnarray*}
    We know from Theorem \ref{thm:fedexprox_quad_time} that the second term of this expression, $\bar{g}_2(\gamma)$, is minimized by
    \begin{align*}
        \left[\frac{1}{\max_{i \in [n]}\lambda_{\max}(\mA_{i})}, \min\brac{\frac{\frac{\ct}{\tau}-1}{\max_{i \in [n]}\lambda_{\max}(\mA_{i})}, \frac{1}{\min_{i\in[n]}\lambda_{\min}^+(\mA_i)}}\right]
    \end{align*}
    when $\frac{\ct}{\tau}\geq2$ and
    \begin{align*}
        \gamma \in \left[0, \max\brac{0, \min\brac{\frac{\frac{\ct}{\tau}-1}{\max_{i \in [n]}\lambda_{\max}(\mA_{i})}, \frac{1}{\min_{i\in[n]}\lambda_{\min}^+(\mA_i)}}}\right]
    \end{align*}
    when $\frac{\ct}{\tau} < 2$.
    
    The same argument can be applied to the term $\bar{g}_1(\gamma)$. To be more precise, due to \eqref{eq:lmax_eig}, \eqref{eq:lmin+_eig}, and~\eqref{eq:yuveVq}, the function $\bar{g}_1(\gamma)$ is approximately constant for all $\gamma \leq \frac{1}{\max_{i\in[n]} \lambda_{\max}(\mA_{i})}.$
    For $\gamma > \frac{\frac{\ct}{\tau}-1}{\max_{i\in[n]} \lambda_{\max}(\mA_{i})}$, we have $\frac{\ct}{\tau} + 1+\gamma \max_{i\in[n]} \lambda_{\max}(\mA_{i}) < 2(1+\gamma \max_{i\in[n]} \lambda_{\max}(\mA_{i}))$. Consequently, letting $\bar{g}(\gamma) \eqdef 2\tau\frac{\max_{i\in[n]} \lambda_{\max}(\mA_{i})}{\lambda_{\min}^{+}(\gamma)}$, $\bar{g}_1(\gamma)$ can be bounded as
    \begin{eqnarray*}
        \frac{1}{2} \bar{g}(\gamma)
        < \bar{g}_1(\gamma)
        < \bar{g}(\gamma).
    \end{eqnarray*}
    Since $\lambda_{\min}^{+}(\gamma)$ is a non-increasing function of $\gamma$, $\bar{g}(\gamma)$ is non-decreasing. It follows that for $\gamma > \frac{\frac{\ct}{\tau}-1}{\max_{i\in[n]} \lambda_{\max}(\mA_{i})}$, $\bar{g}_1(\gamma)$ is non-decreasing in $\gamma$ (up to a constant factor), and we arrive at the same conclusions as in the case of $\bar{g}_2(\gamma)$: $\bar{g}_1(\gamma)$ is minimized by $\gamma \in \left[\frac{1}{\max_{i\in[n]} \lambda_{\max}(\mA_{i})}, \frac{\frac{\ct}{\tau}-1}{\max_{i\in[n]} \lambda_{\max}(\mA_{i})}\right]$ if $\frac{\ct}{\tau}\geq2$ and by $\gamma \in \left[0, \max\brac{0,\frac{\frac{\ct}{\tau}-1}{\max_{i\in[n]} \lambda_{\max}(\mA_{i})}}\right]$ if $\frac{\ct}{\tau}<2$.

    On the other hand, by following a similar argument as in the proof of Theorem \ref{thm:fedexprox_quad_time}, the expression
    \begin{eqnarray*}
        \frac{1}{\lambda_{\min}^{+}(\gamma)} \frac{\max_{i\in[n]}\lambda_{\max}(\mA_{i})}{1+\gamma\max_{i\in[n]}\lambda_{\max}(\mA_{i})}
        &= \frac{\frac{\gamma \max_{i\in[n]}\lambda_{\max}(\mA_{i})}{1+\gamma\max_{i\in[n]}\lambda_{\max}(\mA_{i})}}{\min_{\norm{x} = 1, x \in \ker(\mA)^{\perp}} \frac{1}{n}\sum_{i=1}^{n} x^\top \mQ_i \left[\frac{\gamma \lambda_{j}(\mA_{i})}{1 + \gamma \lambda_{j}(\mA_{i})}\right]_{jj} \mQ_i^\top x}
    \end{eqnarray*}
    is approximately constant for $\gamma \geq \frac{1}{\min_{i\in[n]}\lambda_{\min}^+(\mA_i)}$. This implies that $\bar{g}_1(\gamma)$ is increasing, and hence the intervals can be bounded above by $\frac{1}{\min_{i\in[n]}\lambda_{\min}^+(\mA_i)}$.
    
    The conclusion follows from the fact that the minimum of a convex combination of $\bar{g}_1(\gamma)$ and $\bar{g}_2(\gamma)$ must lie within the same interval.
\end{proof}

\begin{remark}
    The only term in \eqref{eq:fedexprox_q_iter} and \eqref{eq:batch_fedexprox_quad_time} that depends on $S$ is $$L_{\gamma,S} \eqdef \frac{n-S}{S(n-1)} \frac{\max_{i\in[n]}\lambda_{\max}(\mA_{i})}{1+\gamma\max_{i\in[n]}\lambda_{\max}(\mA_{i})} + \frac{n(S-1)}{S(n-1)} \lambda_{\max}(\gamma).$$
    Recall that $\lambda_{\max}(\gamma)$ is the largest eigenvalue of the matrix $\mM = \frac{1}{n}\sum_{i=1}^{n} \frac{1}{2\gamma} (\mI-(\mI+\gamma \mA_{i})^{-1})$, so
    \begin{eqnarray*}
        \lambda_{\max}(\gamma) \leq \frac{1}{\gamma} \frac{1}{n} \sum_{i=1}^{n} \frac{\lambda_{\max}(\mA_{i}) \gamma}{\lambda_{\max}(\mA_{i}) \gamma + 1}
        \leq \frac{\max_{i\in[n]} \lambda_{\max}(\mA_{i})}{\max_{i\in[n]} \lambda_{\max}(\mA_{i}) \gamma + 1}.
    \end{eqnarray*}
    As a result, since $\frac{n-S}{S(n-1)}$ is decreasing, and $\frac{n(S-1)}{S(n-1)}$ is increasing in $S$, both the iteration and time complexities are increasing functions of $S$. This underscores the advantage of involving a larger number of clients in the training process.
\end{remark}

\subsection{Lemmas}

\begin{lemma}\label{lemma:projections_SGD_q}
    Let $x_k$ be the iterates of \algname{SGD} applied to the problem $\min_x \brac{f(x) \eqdef \frac{1}{n}\sum_{i=1}^{n} \parens{\frac{1}{2} x^{T} \mB_i x + c_i^{T}x + d_i}}$, where the matrices $\mB_i$ are symmetric.
    Then $x_{k}-\Pi(x_k) \in \range(\frac{1}{n} \sum_{i=1}^{n} \mB_i)$ for all $k$.
\end{lemma}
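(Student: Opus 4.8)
The plan is to recognize that this is a purely geometric statement about the Euclidean projection onto the affine solution set, and that the \algname{SGD} dynamics play no essential role: the only properties used are the symmetry of the averaged Hessian and the nonemptiness of $\cX_*$. Write $\mB \eqdef \frac{1}{n}\sum_{i=1}^n \mB_i$ and $c \eqdef \frac{1}{n}\sum_{i=1}^n c_i$, so that $\nabla f(x) = \mB x + c$ and the solution set is the affine subspace $\cX_* = \brac{x \in \R^d : \mB x = -c}$.

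First I would fix any $x_* \in \cX_*$ (nonempty by assumption; in the quadratic application this is guaranteed by the interpolation Assumption~\ref{ass:inter}) and observe that $\cX_* = x_* + \Ker(\mB)$: indeed $\mB x = -c = \mB x_*$ holds iff $x - x_* \in \Ker(\mB)$. Thus $\cX_*$ is an affine subspace whose direction space is exactly $\Ker(\mB)$.

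Next I would invoke the defining property of the orthogonal projection onto an affine subspace: for every $x$, the residual $x - \Pi(x)$ is orthogonal to the direction space of $\cX_*$, i.e. $x - \Pi(x) \in \Ker(\mB)^{\perp}$. Since $\mB$ is symmetric, $\Ker(\mB)^{\perp} = \range(\mB^{\top}) = \range(\mB)$, and therefore $x - \Pi(x) \in \range(\mB)$ for every $x \in \R^d$. Applying this to $x = x_k$ for each $k$ yields the claim. Equivalently, one may use the closed form $\Pi(x) = x - \mB^{\dagger}(\mB x + c)$ (the analogue of \eqref{eq:projcetion}), giving $x - \Pi(x) = \mB^{\dagger}(\mB x + c) \in \range(\mB^{\dagger}) = \range(\mB)$, where the last equality again uses symmetry of $\mB$.

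The only point requiring care---and the main, albeit minor, obstacle---is ensuring that $\Pi$ is the orthogonal projection onto $\cX_*$ (as fixed by \eqref{eq:projcetion}) and that the direction space of $\cX_*$ is precisely $\Ker(\mB)$; the identification $\Ker(\mB)^{\perp} = \range(\mB)$ then hinges solely on the symmetry of the $\mB_i$. Notably, the iterates $x_k$ enter only as particular points of $\R^d$, so no induction over the \algname{SGD} updates is needed. In fact, the individual stochastic gradients $\mB_i x_k + c_i$ need not lie in $\range(\mB)$, which is exactly why the argument must route through the geometry of the projection rather than through the update rule.
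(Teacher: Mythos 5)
Your proof is correct, and your primary argument takes a genuinely different route from the paper's. The paper proceeds computationally: it plugs in the closed-form projection (the analogue of \eqref{eq:projcetion}), writes $x_k - \Pi(x_k) = \mB^{\dagger}\parens{\mB x_k + c}$ with $\mB \eqdef \frac{1}{n}\sum_{i=1}^{n}\mB_i$ and $c \eqdef \frac{1}{n}\sum_{i=1}^{n}c_i$, and then invokes the pseudoinverse identity $\mB^{\top}\parens{\mB\mB^{\top}}^{\dagger} = \mB^{\dagger}$ to exhibit the residual explicitly as $\mB^{\top}$ applied to some vector, whence symmetry of the $\mB_i$ gives membership in $\range(\mB)$. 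You instead argue geometrically: $\cX_* = x_* + \ker(\mB)$ is an affine subspace with direction space $\ker(\mB)$, the residual of the orthogonal projection lies in $\ker(\mB)^{\perp}$, and $\ker(\mB)^{\perp} = \range(\mB^{\top}) = \range(\mB)$ by symmetry. The two are logically equivalent --- indeed your ``equivalently'' remark, $x - \Pi(x) = \mB^{\dagger}(\mB x + c) \in \range(\mB^{\dagger}) = \range(\mB)$, is in substance the paper's computation with a slightly different pseudoinverse fact --- but your main argument buys two things. First, it makes transparent that the lemma is a statement about the projection geometry and has nothing to do with the \algname{SGD} dynamics (the paper's proof even carries a stray $\Exp{x_k}$ and a sign slip in its displayed computation, artifacts of framing the claim in terms of iterates, though nothing about the iterates is actually used); your observation that the individual stochastic gradients $\mB_i x_k + c_i$ need not lie in $\range(\mB)$ correctly pinpoints why the argument must route through the projection rather than the update rule. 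Second, it isolates the one genuine prerequisite, which you rightly flag: that the formula \eqref{eq:projcetion} is indeed the Euclidean projection onto the (nonempty, by Assumption~\ref{ass:inter} in the application) solution set --- a standard fact for consistent linear systems. The paper sidesteps this identification by effectively taking the closed form as the definition of $\Pi$, at the cost of a less illuminating manipulation; your version is more modular and generalizes verbatim to any point $x \in \R^d$, not just the iterates.
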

\begin{proof}
    By definition of $\Pi$, we have
    \begin{align*}
        \Pi(x) = x - \parens{\frac{1}{n} \sum_{i=1}^{n} \mB_i}^{\dagger} \parens{\frac{1}{n} \sum_{i=1}^{n} (\mB_i x + c_i)}.
    \end{align*}
    Hence, using the identity $\mA^\top (\mA\mA^\top)^{\dagger} = \mA^{\dagger}$,
    \begin{align*}
        x_{k}-\Pi(x_{k})
         & = \parens{\frac{1}{n} \sum_{i=1}^{n} \mB_i}^{\dagger} \parens{\frac{1}{n} \sum_{i=1}^{n} (\mB_i x_k + c_i)} \\
         & = \parens{\frac{1}{n}\sum_{i=1}^{n}\mB_i}^{T} \parens{\parens{\frac{1}{n}\sum_{i=1}^{n}\mB_i} \parens{\frac{1}{n}\sum_{i=1}^{n}\mB_i}^{T}}^{\dagger} \parens{\frac{1}{n}\sum_{i=1}^{n}\mB_i \Exp{x_{k}} - c_i},
    \end{align*}
    and so, by symmetry of $\mB_{i}$, we get $x_{k}-\Pi(x_{k}) \in \range(\frac{1}{n} \sum_{i=1}^{n} \mB_i)$.
\end{proof}

\begin{lemma}\label{lemma:lmin+eigen}
    Let $\mM \eqdef \frac{1}{n}\sum_{i=1}^{n}\frac{1}{\gamma}(\mI - (\gamma \mA_{i} + \mI)^{-1})$, where $\gamma>0$ and $\mA_{i} \in \textnormal{Sym}^{d}_{+}$ for all $i\in[n]$. The smallest positive eigenvalue of $\mM$ is given by 
    \begin{align*}
        \lambda_{\min}^{+}(\gamma) \eqdef \min_{\norm{x} = 1, x \in (\ker \mA)^{\perp}} \frac{1}{n}\sum_{i=1}^{n} x^\top \mQ_i \left[\frac{\lambda_{j}(\mA_{i})}{1 + \gamma \lambda_{j}(\mA_{i})}\right]_{jj} \mQ_i^\top x,
    \end{align*}
    where $\mA=\frac{1}{n}\sum_{i=1}^{n} \mA_{i}$ and $[b_j]_{jj}$ denotes a diagonal matrix with $b_j$ as the $j$th entry. 
\end{lemma}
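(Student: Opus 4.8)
The plan is to reduce the claim to the Courant--Fischer variational characterization of eigenvalues, after correctly identifying the kernel of $\mM$. First I would use the eigendecomposition $\mA_i = \mQ_i \blambda_i \mQ_i^\top$ to write
$$\frac{1}{\gamma}\parens{\mI - (\gamma \mA_i + \mI)^{-1}} = \mQ_i \left[\frac{\lambda_{j}(\mA_{i})}{1 + \gamma \lambda_{j}(\mA_{i})}\right]_{jj} \mQ_i^\top,$$
which follows from applying the scalar identity $\frac{1}{\gamma}\parens{1 - \frac{1}{1+\gamma\lambda}} = \frac{\lambda}{1+\gamma\lambda}$ to each eigenvalue. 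Averaging over $i$ shows that $x^\top \mM x$ equals exactly the quadratic form appearing in the statement, and that each summand $\mQ_i \left[\frac{\lambda_{j}(\mA_{i})}{1 + \gamma \lambda_{j}(\mA_{i})}\right]_{jj} \mQ_i^\top$ is positive semidefinite. Hence $\mM$ is symmetric and PSD, and its smallest positive eigenvalue equals, by the Rayleigh-quotient characterization, the minimum of $x^\top \mM x$ over unit vectors in $(\ker \mM)^\perp = \range(\mM)$.

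The remaining and central step is to prove that $\ker \mM = \ker \mA$, so that the constraint $x \in (\ker \mM)^\perp$ may be replaced by $x \in (\ker \mA)^\perp$. For this I would invoke the elementary fact that for a finite sum of PSD matrices $\sum_i \mB_i$ one has $x^\top\parens{\sum_i \mB_i}x = 0$ iff $x^\top \mB_i x = 0$ for every $i$, iff $\mB_i x = 0$ for every $i$; therefore $\ker\parens{\sum_i \mB_i} = \bigcap_i \ker \mB_i$. Applying this with $\mB_i = \mA_i$ gives $\ker \mA = \bigcap_i \ker \mA_i$, and applying it with $\mB_i = \mQ_i \left[\frac{\lambda_{j}(\mA_{i})}{1 + \gamma \lambda_{j}(\mA_{i})}\right]_{jj} \mQ_i^\top$ gives $\ker \mM = \bigcap_i \ker \mB_i$. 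Since the diagonal factor $\frac{\lambda_{j}(\mA_{i})}{1 + \gamma \lambda_{j}(\mA_{i})}$ vanishes precisely when $\lambda_{j}(\mA_{i})=0$ and is positive otherwise, $\mB_i$ shares the eigenvectors of $\mA_i$ and annihilates exactly the same eigendirections, so $\ker \mB_i = \ker \mA_i$. Intersecting over $i$ yields $\ker \mM = \ker \mA$.

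Combining the two steps gives
$$\lambda_{\min}^{+}(\gamma) = \min_{\norm{x}=1,\, x \in (\ker \mA)^\perp} x^\top \mM x = \min_{\norm{x}=1,\, x \in (\ker \mA)^\perp} \frac{1}{n}\sum_{i=1}^{n} x^\top \mQ_i \left[\frac{\lambda_{j}(\mA_{i})}{1 + \gamma \lambda_{j}(\mA_{i})}\right]_{jj} \mQ_i^\top x,$$
which is the stated formula. I expect the main obstacle to be the kernel identification $\ker \mM = \ker \mA$; the rest is a routine application of the spectral theorem and the Rayleigh quotient. A minor point worth stating cleanly is the degenerate case $\ker \mA = \{0\}$ (equivalently $\mM \succ 0$), in which $(\ker \mA)^\perp = \R^d$ and the minimum runs over all unit vectors, consistently with the smallest positive eigenvalue coinciding with the smallest eigenvalue.
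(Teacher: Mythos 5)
Your proof is correct, and while it shares the paper's outer skeleton --- rewrite $\mM$ via the spectral decompositions $\mA_i = \mQ_i \blambda_i \mQ_i^\top$ so that each summand becomes $\mQ_i \left[\frac{\lambda_{j}(\mA_{i})}{1 + \gamma \lambda_{j}(\mA_{i})}\right]_{jj} \mQ_i^\top$, characterize $\lambda_{\min}^{+}(\gamma)$ as a Rayleigh quotient over $(\ker \mM)^{\perp}$, then identify $\ker \mM = \ker \mA$ --- your treatment of the central kernel step is genuinely different and arguably cleaner. The paper works from the fixed-point equation $\frac{1}{n}\sum_{i=1}^{n}(\gamma \mA_{i} + \mI)^{-1}x = x$ and splits into two cases: if some $\mA_j$ satisfies $\lambda_{\min}(\mA_j) > 0$, an eigenvalue bound shows the averaged inverse has top eigenvalue strictly below $1$, so both kernels are trivial; otherwise all $\lambda_{\min}(\mA_i) = 0$, the maximum eigenvalue of each $(\gamma \mA_i + \mI)^{-1}$ equals $1$, and the equation forces $(\gamma \mA_i + \mI)^{-1}x = x$, i.e.\ $\mA_i x = 0$, for every $i$. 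Your single general fact --- for PSD summands, $\ker\parens{\sum_i \mB_i} = \bigcap_i \ker \mB_i$ --- applied once to $\{\mA_i\}$ and once to the transformed summands, combined with the observation that the spectral map $\lambda \mapsto \frac{\lambda}{1+\gamma\lambda}$ vanishes exactly at $\lambda = 0$ (hence $\ker \mB_i = \ker \mA_i$), collapses the case analysis entirely and handles the degenerate case $\ker \mA = \{0\}$ uniformly, whereas the paper's route gains nothing in exchange for its extra branching. One minor difference in rigor allocation: the paper proves the Rayleigh-quotient characterization of the smallest positive eigenvalue from scratch via an orthonormal eigenbasis, while you invoke it as standard; for a symmetric PSD matrix this is a legitimate economy, though in a self-contained appendix you would want to spell it out (or cite it) as the paper does.
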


\begin{proof}
    First, observe that the matrices $\mI - (\gamma \mA_{i} + \mI)^{-1}$ are symmetric, and their eigenvalues are given by
    \begin{align*}
        1- \frac{1}{1+\gamma \lambda_{j}(\mA_i)} = \frac{\gamma\lambda_{j}(\mA_{i})}{1+\gamma \lambda_{j}(\mA_{i})}.
    \end{align*}
    Consequently, $\mM$ is a sum of symmetric positive definite matrices and is therefore also symmetric positive definite. We now claim that
    \begin{align*}
        \lambda_{\min}^{+}(\gamma) = \min_{\norm{x} = 1, x \in (\ker \mM)^{\perp}}  x^\top \mM x.
    \end{align*}
    First, choosing $x$ to be a multiple of the eigenvector of $\mM$ corresponding to $\lambda_{\min}^{+}(\gamma)$, we see that 
    \begin{align*}
        \lambda_{\min}^{+}(\gamma) \geq \min_{\norm{x} = 1, x \in (\ker \mM)^{\perp}} x^\top \mM x.
    \end{align*}
    To establish the reverse inequality, let $\{e_{i}\}$ be an orthonormal eigenbasis of $\mM$ and let $x$ be such that $\norm{x}=1$. Then we can write $x=\sum_{i=1}^{d} \alpha_{i}e_{i}$, and since $x\in(\ker \mM)^{\perp}$, all coefficients corresponding to an eigenvalue $0$ vanish. Thus  
    \begin{align*}
        x^\top \mM x = \sum_{i=1}^{d}\alpha_{i}^{2}\lambda_{i} \geq \lambda_{\min}^{+}(\gamma) \sum_{i=1}^{d}\alpha_{i}^{2} = \lambda_{\min}^{+}(\gamma) \norm{x}^{2} = \lambda_{\min}^{+}(\gamma).
    \end{align*}
    This proves that 
    \begin{align*}
        \lambda_{\min}^{+}(\gamma) = \min_{\norm{x} = 1, x \in (\ker \mM)^{\perp}}  x^\top \mM x.
    \end{align*}
    Now, each matrix $\mI - (\gamma \mA_{i} + \mI)^{-1}$ can be decomposed as $\mQ_i \left[\frac{\lambda_{j}(\mA_{i})}{1 + \gamma \lambda_{j}(\mA_{i})}\right]_{jj} \mQ_i^\top$. Therefore what remains to be proven is that $\ker(\mM)=\ker(\mA)$. To this end, take $x\in \ker(\mM)$. Then $\mM x=0$, and since $\gamma>0$, we have
    \begin{align}\label{eq: smallest_eigval_lemma}
        \frac{1}{n}\sum_{i=1}^{n}(\gamma \mA_{i} + \mI)^{-1}x = x.
    \end{align}
    Next, observe that 
    \begin{align*}
        \lambda_{\max}\parens{\frac{1}{n}\sum_{i=1}^{n}(\gamma \mA_{i} + \mI)^{-1}} \leq \frac{1}{n}\sum_{i=1}^{n}\lambda_{\max}\parens{(\gamma \mA_{i} + \mI)^{-1}} = \frac{1}{n}\sum_{i=1}^{n}\frac{1}{1+\gamma \lambda_{\min}(\mA_{i})}.
    \end{align*}
    Now consider two cases.
    
    First, if there exists $j\in[n]$ such that $\lambda_{\min}(\mA_{j}) > 0$, the above upper bound is strictly less than~$1$. This implies that there exists no nonzero $x$ that satisfies equation \eqref{eq: smallest_eigval_lemma}, and so $\ker(\mM)=\{0\}$. But  $\lambda_{\min}(\mA_{j}) > 0$ also implies that 
    \begin{align*}
        \lambda_{\min}\parens{\frac{1}{n} \sum_{i=1}^{n} \mA_{i}} \geq \frac{1}{n} \sum_{i=1}^{n}\lambda_{\min}(\mA_{i}) > 0.
    \end{align*}
    As a result, $\ker(\mM) = \{0\} = \ker(\mA)$.
    
    Now, let us suppose that $\lambda_{\min}(\mA_{i}) = 0$ for all $i\in[n]$. In this case, $\lambda_{\max}\parens{(\gamma \mA_{i} + \mI)^{-1}} = 1$ for all~$i\in[n]$. Since all matrices $(\gamma \mA_{i} + \mI)^{-1}$ are symmetric positive definite with maximum eigenvalue equal to $1$, it follows that equation \eqref{eq: smallest_eigval_lemma} holds if and only if 
    \begin{align*}
        (\gamma \mA_{i} + \mI)^{-1} x = x
    \end{align*} 
    for all $i\in[n]$. This is equivalent to $\gamma \mA_{i}x = 0$ for all $i\in [n]$. Since $\gamma>0$ and $\mA_{i}$ are symmetric positive definite, we can, in turn, equivalently express it as $\mA x = \frac{1}{n}\sum_{i=1}^{n}\mA_{i}x = 0$.
    
    Consequently,~\eqref{eq: smallest_eigval_lemma} holds (i.e., $x\in \ker(\mM)$) if and only if $x\in \ker(\mA)$ and hence $\ker(\mA)=\ker(\mM)$.

    The final expression in the statement of the Lemma follows from eigendecomposition (see Fact~\ref{fact:m_eigen_evals}).
\end{proof}

\newpage

\section{FedExProx under P{\L} Condition}

\begin{theorem}
    \label{thm:pl}
    Let Assumptions~\ref{ass:convex}, \ref{ass:local_lipschitz_constant}, \ref{ass:inter}, and \ref{ass:pl_condition_moreau} hold, and assume that $\alpha \gamma \leq \frac{2}{L_{\gamma, S}}$. Then the iterates of Algorithm \ref{algorithm:batch_fedexprox} satisfy
    \begin{eqnarray*}
        \Exp{\norm{x_K - \Pi(x_K)}^2}
        &\leq& \parens{1 - \alpha \gamma \parens{2 - \alpha \gamma L_{\gamma, S}} \frac{\mu^+_{\gamma}}{2}}^K \Exp{\norm{x_0 - \Pi(x_0)}^2}
    \end{eqnarray*}
    and hence
    \begin{eqnarray*}
        \Exp{f(x_K) - f(\Pi(x_K))}
        \leq \frac{L}{2} \parens{1 - \alpha \gamma \parens{2 - \alpha \gamma L_{\gamma, S}} \frac{\mu^+_{\gamma}}{2}}^K \Exp{\norm{x_0 - \Pi(x_0)}^2},
    \end{eqnarray*}
    where $L_{\gamma, S} \eqdef \parens{\frac{n-S}{S(n-1)} \frac{L_{\max}}{1+\gamma L_{\max}} + \frac{n(S-1)}{S(n-1)} L_{\gamma}}$.
    For the optimal choice of stepsize and extrapolation parameter $\alpha \gamma = \nicefrac{1}{L_{\gamma, S}}$, these rates become
    \begin{eqnarray*}
        \Exp{\norm{x_K - \Pi(x_K)}^2}
        &\leq& \parens{1 - \frac{\mu^+_{\gamma}}{2L_{\gamma, S}}}^K \Exp{\norm{x_0 - \Pi(x_0)}^2}
    \end{eqnarray*}
    and
    \begin{eqnarray*}
        \Exp{f(x_K) - f(\Pi(x_K))}
        \leq \frac{L}{2} \parens{1 - \frac{\mu^+_{\gamma}}{2L_{\gamma, S}}}^K \Exp{\norm{x_0 - \Pi(x_0)}^2}.
    \end{eqnarray*}
\end{theorem}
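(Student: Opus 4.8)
The plan is to exploit the equivalence, established in Section~\ref{sec:fedexprox_sgd}, between \algname{FedExProx} with nice sampling and minibatch \algname{SGD} applied to $M^{\gamma}$ with stepsize $\alpha\gamma$, and then run a P\L-style \algname{SGD} analysis phrased in terms of distance to the solution set. Writing $g_k \eqdef \frac{1}{S}\sum_{i\in\cS_k}\nabla M^{\gamma}_{f_i}(x_k)$ so that $x_{k+1} = x_k - \alpha\gamma g_k$, I would first use $\Pi(x_k)\in\cX_*$ to bound $\norm{x_{k+1}-\Pi(x_{k+1})}^2 \leq \norm{x_{k+1}-\Pi(x_k)}^2$ and expand:
$$\norm{x_{k+1}-\Pi(x_k)}^2 = \norm{x_k-\Pi(x_k)}^2 - 2\alpha\gamma\inp{g_k}{x_k-\Pi(x_k)} + \alpha^2\gamma^2\norm{g_k}^2.$$
Taking expectation conditioned on $x_k$, unbiasedness of $S$-nice sampling gives $\ExpSub{k}{g_k}=\nabla M^{\gamma}(x_k)$, so the cross term becomes $-2\alpha\gamma\inp{\nabla M^{\gamma}(x_k)}{x_k-\Pi(x_k)}$.

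The crux is the second-moment bound, with the constant tracked exactly. Applying Lemma~\ref{lemma:nice_m} with $\mB_i=\nabla M^{\gamma}_{f_i}(x_k)$ yields
$$\ExpSub{k}{\norm{g_k}^2} = \frac{n-S}{S(n-1)}\frac{1}{n}\sum_{i=1}^n\norm{\nabla M^{\gamma}_{f_i}(x_k)}^2 + \frac{n(S-1)}{S(n-1)}\norm{\nabla M^{\gamma}(x_k)}^2.$$
Here I would resist bounding $\norm{\nabla M^{\gamma}_{f_i}(x_k)}^2$ by $2\,\tfrac{L_{\max}}{1+\gamma L_{\max}}(M^{\gamma}_{f_i}(x_k)-\inf M^{\gamma}_{f_i})$, since the spurious factor $2$ would wreck the optimal stepsize. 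Instead the key is cocoercivity (Lemma~\ref{lemma:grad_breg}) of each $M^{\gamma}_{f_i}$, which is convex and $\tfrac{L_i}{1+\gamma L_i}$-smooth (Lemma~\ref{lemma:moreau_smooth}) with minimizer $x_*$ under interpolation, giving $\norm{\nabla M^{\gamma}_{f_i}(x_k)}^2 \leq \tfrac{L_{\max}}{1+\gamma L_{\max}}\inp{\nabla M^{\gamma}_{f_i}(x_k)}{x_k-x_*}$ (using that $\tfrac{L}{1+\gamma L}$ is increasing in $L$ and the inner product is nonnegative by convexity). Likewise $\norm{\nabla M^{\gamma}(x_k)}^2 \leq L_{\gamma}\inp{\nabla M^{\gamma}(x_k)}{x_k-x_*}$ via Lemma~\ref{lemma:m_gamma_convex_smooth}. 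Averaging the per-client bound and setting $x_*=\Pi(x_k)$ collapses both contributions onto the same inner product, producing exactly $\ExpSub{k}{\norm{g_k}^2}\leq L_{\gamma,S}\inp{\nabla M^{\gamma}(x_k)}{x_k-\Pi(x_k)}$.

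Substituting back, the conditional expectation becomes
$$\ExpSub{k}{\norm{x_{k+1}-\Pi(x_{k+1})}^2} \leq \norm{x_k-\Pi(x_k)}^2 - \alpha\gamma\parens{2-\alpha\gamma L_{\gamma,S}}\inp{\nabla M^{\gamma}(x_k)}{x_k-\Pi(x_k)},$$
where the assumption $\alpha\gamma\leq 2/L_{\gamma,S}$ keeps the prefactor nonnegative. I would then lower-bound the inner product using convexity of $M^{\gamma}$, $\inp{\nabla M^{\gamma}(x_k)}{x_k-\Pi(x_k)}\geq M^{\gamma}(x_k)-M^{\gamma}(x_*)$, and the P\L-induced quadratic growth from Lemma~\ref{lemma:star_conv} (valid since $M^{\gamma}$ is convex, $L_{\gamma}$-smooth and P\L with $\mu^{+}_{\gamma}$ by Assumption~\ref{ass:pl_condition_moreau}), $M^{\gamma}(x_k)-M^{\gamma}(x_*)\geq \tfrac{\mu^{+}_{\gamma}}{2}\norm{x_k-\Pi(x_k)}^2$, obtaining contraction by $1-\alpha\gamma(2-\alpha\gamma L_{\gamma,S})\tfrac{\mu^{+}_{\gamma}}{2}$. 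Taking total expectation and unrolling gives the first bound; the function-value bound follows from $L$-smoothness of $f$ with $\nabla f(\Pi(x_K))=0$, i.e. $f(x_K)-f(\Pi(x_K))\leq \tfrac{L}{2}\norm{x_K-\Pi(x_K)}^2$; and the stated simplifications come from plugging $\alpha\gamma=1/L_{\gamma,S}$, so that $\alpha\gamma(2-\alpha\gamma L_{\gamma,S})=1/L_{\gamma,S}$. The main obstacle is precisely the cocoercivity step: any looser second-moment estimate inflates the $L_{\gamma,S}$ coefficient and destroys the clean $1-\mu^{+}_{\gamma}/(2L_{\gamma,S})$ rate at the optimal stepsize.
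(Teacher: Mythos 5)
Your proposal is correct and follows essentially the same route as the paper's proof of Theorem~\ref{thm:pl}: the \algname{SGD}-on-$M^{\gamma}$ reformulation, Lemma~\ref{lemma:nice_m} applied to the minibatch gradient (your $\mB_i = \nabla M^{\gamma}_{f_i}(x_k)$ equals the paper's $\nabla M^{\gamma}_{f_i}(x_k)-\nabla M^{\gamma}_{f_i}(x_*)$ since $\nabla M^{\gamma}_{f_i}(x_*)=0$ under interpolation), the cocoercivity bounds from Lemmas~\ref{lemma:grad_breg}, \ref{lemma:moreau_smooth} and~\ref{lemma:m_gamma_convex_smooth} producing the same $L_{\gamma,S}$ without the spurious factor of $2$, and the P{\L}-induced quadratic growth via Lemma~\ref{lemma:star_conv}. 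Your inner-product quantity $\inp{\nabla M^{\gamma}(x_k)}{x_k-\Pi(x_k)}$ coincides with the paper's $D_{M^{\gamma}}(x_k,x_*)+D_{M^{\gamma}}(x_*,x_k)$ at $x_*=\Pi(x_k)$ (again because $\nabla M^{\gamma}(x_*)=0$), so the only difference is notational, not substantive.
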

\begin{remark}
    The above theorem proves Theorem \ref{thm:fedexprox_pl_iter_stoch}. Furthermore, by setting $S=n$, we recover the result from Theorem \ref{thm:fedexprox_pl_iter}.
\end{remark}
\begin{proof}
    The proof closely follows the proof of Theorem $3$ from \citet{li2024power}.
    Recall from Section~\ref{sec:fedexprox_sgd} that Algorithm \ref{algorithm:batch_fedexprox} is equivalent to \algname{SGD} for minimizing $M^{\gamma}(x) \eqdef \frac{1}{n}\sum_{i=1}^{n} \nabla M_{f_i}^{\gamma}(x)$ with stepsize $\alpha\gamma$, and its updates can be written as
    \begin{align*}
        x_{k+1} = x_{k} - \alpha \gamma \frac{1}{S}\sum_{i\in\cS_k} \nabla M_{f_i}^{\gamma}(x_{k}).
    \end{align*}
    Then, for any $x_*\in\cX_*$
    \begin{eqnarray}\label{eq:pldfsdsd}
        &&\hspace{-0.5cm}\ExpSub{k}{\norm{x_{k+1} - x_*}^2} \nonumber \\
        &=& \norm{x_k - x_*}^2 - 2 \alpha \gamma \inp{x_k - x_*}{\ExpSub{k}{\frac{1}{S}\sum_{i\in\cS_k} \nabla M_{f_i}^{\gamma}(x_{k})}}
        + \alpha^2 \gamma^2 \ExpSub{k}{\norm{\frac{1}{S}\sum_{i\in\cS_k} \nabla M_{f_i}^{\gamma}(x_{k})}^2} \nonumber \\
        &\overset{\eqref{lemma:mini_equiv_global}}{=}& \norm{x_k - x_*}^2 - 2 \alpha \gamma \inp{x_k - x_*}{\nabla M^{\gamma}(x_{k}) - \nabla M^{\gamma}(x_*)}
        + \alpha^2 \gamma^2 \ExpSub{k}{\norm{\frac{1}{S}\sum_{i\in\cS_k} \nabla M_{f_i}^{\gamma}(x_{k})}^2} \nonumber \\
        &=& \norm{x_k - x_*}^2 - 2 \alpha \gamma \parens{D_{M^{\gamma}}(x_k, x_*) + D_{M^{\gamma}}(x_*, x_k)}
        + \alpha^2 \gamma^2 \ExpSub{k}{\norm{\frac{1}{S}\sum_{i\in\cS_k} \nabla M_{f_i}^{\gamma}(x_{k})}^2},
    \end{eqnarray}
    where $D_{M^{\gamma}}(x, y) \eqdef M^{\gamma}(x) - M^{\gamma}(y) - \inp{\nabla M^{\gamma} (y)}{x-y}$. Next, applying Lemma \ref{lemma:nice_m} with $\mB_i = \nabla M_{f_i}^{\gamma}(x_{k}) - \nabla M_{f_i}^{\gamma}(x_*) \in \R^{d\times 1}$, the last term in the above inequality can be written as
    \begin{eqnarray*}
        \ExpSub{k}{\norm{\frac{1}{S}\sum_{i\in\cS_k} \nabla M_{f_i}^{\gamma}(x_{k})}^2}
        &\overset{\eqref{lemma:mini_equiv_local}}{=}& \ExpSub{k}{\norm{\frac{1}{S} \sum_{i\in\cS_k} \parens{\nabla M_{f_i}^{\gamma}(x_{k}) - \nabla M_{f_i}^{\gamma}(x_*)}}^2} \\
        &\overset{\eqref{lemma:nice_m}}{=}& \frac{n-S}{S(n-1)} \frac{1}{n} \sum_{i=1}^n \norm{\nabla M_{f_i}^{\gamma}(x_{k}) - \nabla M_{f_i}^{\gamma}(x_*)}^2 \\
        &&+ \frac{n(S-1)}{S(n-1)} \norm{\frac{1}{n} \sum_{i=1}^n \parens{\nabla M_{f_i}^{\gamma}(x_{k}) - \nabla M_{f_i}^{\gamma}(x_*)}}^2.
    \end{eqnarray*}
    Looking at the first term of the inequality above, using convexity and smoothness of the functions $M_{f_i}^{\gamma}$ (Lemmas~\ref{lemma:moreau_smooth} and~\ref{lemma:moreau_convex}), we have
    \begin{eqnarray*}
        \frac{1}{n} \sum_{i=1}^n \norm{\nabla M_{f_i}^{\gamma}(x_{k}) - \nabla M_{f_i}^{\gamma}(x_*)}^2
        &\overset{\eqref{lemma:grad_breg}}{\leq}& \frac{1}{n} \sum_{i=1}^n \frac{L_i}{1+\gamma L_i} \parens{D_{M_{f_i}^{\gamma}}(x_k, x_*) + D_{M_{f_i}^{\gamma}}(x_*, x_k)} \\
        &\leq& \frac{L_{\max}}{1+\gamma L_{\max}} \frac{1}{n} \sum_{i=1}^n \parens{D_{M_{f_i}^{\gamma}}(x_k, x_*) + D_{M_{f_i}^{\gamma}}(x_*, x_k)} \\
        &=& \frac{L_{\max}}{1+\gamma L_{\max}} \parens{D_{M^{\gamma}}(x_k, x_*) + D_{M^{\gamma}}(x_*, x_k)}.
    \end{eqnarray*}
    Next, since by Lemma \ref{lemma:m_gamma_convex_smooth}, the function $M^{\gamma}$ is convex and smooth, the second term can be bounded as
    \begin{eqnarray*}
        \norm{\frac{1}{n} \sum_{i=1}^n \parens{\nabla M_{f_i}^{\gamma}(x_{k}) - \nabla M_{f_i}^{\gamma}(x_*)}}^2
        &=& \norm{\nabla M^{\gamma}(x_{k}) - \nabla M^{\gamma}(x_*)}^2 \\
        &\overset{\eqref{lemma:grad_breg}}{\leq}& L_{\gamma} \parens{D_{M^{\gamma}}(x_k, x_*) + D_{M^{\gamma}}(x_*, x_k)}.
    \end{eqnarray*}

    Applying these bounds in \eqref{eq:pldfsdsd} gives
    \begin{eqnarray*}
        &&\hspace{-0.5cm}\ExpSub{k}{\norm{x_{k+1} - x_*}^2} \\
        &\leq& \norm{x_k - x_*}^2 - 2 \alpha \gamma \parens{D_{M^{\gamma}}(x_k, x_*) + D_{M^{\gamma}}(x_*, x_k)} \\
        &&+ \alpha^2 \gamma^2 \underbrace{\parens{\frac{n-S}{S(n-1)} \frac{L_{\max}}{1+\gamma L_{\max}} + \frac{n(S-1)}{S(n-1)} L_{\gamma}}}_{\eqdef L_{\gamma, S}} \parens{D_{M^{\gamma}}(x_k, x_*) + D_{M^{\gamma}}(x_*, x_k)} \\
        &=& \norm{x_k - x_*}^2 - \alpha \gamma \parens{2 - \alpha \gamma L_{\gamma, S}} \parens{D_{M^{\gamma}}(x_k, x_*) + D_{M^{\gamma}}(x_*, x_k)}.
    \end{eqnarray*}
    By Lemma \ref{lemma:m_gamma_convex_smooth}, $M^{\gamma}$ is convex and smooth, so $D_{M^{\gamma}}(x_*, x_k) \geq 0$, and by Assumption \ref{ass:pl_condition_moreau} and Lemma \ref{lemma:star_conv}, we have
    \begin{eqnarray*}
        D_{M^{\gamma}}(x_k, x_*) = M^{\gamma}(x_k) - M^{\gamma}(x_*) \geq \frac{\mu^+_{\gamma}}{2} \norm{x_k - \Pi(x_k)}^2.
    \end{eqnarray*}
    Therefore, for $\alpha \gamma L_{\gamma, S} \leq 2$
    \begin{eqnarray*}
        \ExpSub{k}{\norm{x_{k+1} - x_*}^2}
        &\leq& \norm{x_k - x_*}^2 - \alpha \gamma \parens{2 - \alpha \gamma L_{\gamma, S}} \parens{D_{M^{\gamma}}(x_k, x_*) + D_{M^{\gamma}}(x_*, x_k)} \\
        &\leq& \norm{x_k - x_*}^2 - \alpha \gamma \parens{2 - \alpha \gamma L_{\gamma, S}} \frac{\mu^+_{\gamma}}{2} \norm{x_k - \Pi(x_k)}^2.
    \end{eqnarray*}
    Taking expectation and letting $x_* = \Pi(x_k)$ gives
    \begin{eqnarray*}
        \Exp{\norm{x_{k+1} - \Pi(x_{k+1})}^2} &\leq& \Exp{\norm{x_{k+1} - \Pi(x_k)}^2} \\
        &\leq& \Exp{\norm{x_k - \Pi(x_k)}^2} - \alpha \gamma \parens{2 - \alpha \gamma L_{\gamma, S}} \frac{\mu^+_{\gamma}}{2} \Exp{\norm{x_k - \Pi(x_k)}^2}.
    \end{eqnarray*}
    Unrolling the recurrence, we obtain the first result
    \begin{eqnarray*}
        \Exp{\norm{x_K - \Pi(x_K)}^2}
        &\leq& \parens{1 - \alpha \gamma \parens{2 - \alpha \gamma L_{\gamma, S}} \frac{\mu^+_{\gamma}}{2}}^K \Exp{\norm{x_0 - \Pi(x_0)}^2}.
    \end{eqnarray*}
    Lastly, using $L$--smoothness of $f$, it follows that
    \begin{eqnarray*}
        \Exp{f(x_K) - f(\Pi(x_K))}
        &\leq& \frac{L}{2} \Exp{\norm{x_K - \Pi(x_K)}^2} \\
        &\leq& \frac{L}{2} \parens{1 - \alpha \gamma \parens{2 - \alpha \gamma L_{\gamma, S}} \frac{\mu^+_{\gamma}}{2}}^K \Exp{\norm{x_0 - \Pi(x_0)}^2}.
    \end{eqnarray*}
    Substituting $\alpha \gamma = \nicefrac{1}{L_{\gamma, S}}$, which minimizes the expression $\alpha \gamma \parens{2 - \alpha \gamma L_{\gamma, S}}$, finishes the proof.
\end{proof}

\newpage

\section{FedExProx with Inexact Computations}\label{sec:inexact}

In practice, solving \eqref{eq:prox} exactly is often infeasible, and we can only find a vector $\prox^{\delta}_{\gamma f} (x)$ such that
\begin{align*}
    \norm{\prox^{\delta}_{\gamma f} (x) - \prox_{\gamma f} (x)}^2 \leq \delta,
\end{align*}
where $\delta$ represents the accuracy of the approximate solution to \eqref{eq:prox}. As a result, we can only calculate an inexact gradient of the Moreau envelope, defined as
\begin{align*}
    \nabla M^{\gamma,\delta}_f(x) \eqdef \frac{1}{\gamma}(x-\prox^{\delta}_{\gamma f}(x)).
\end{align*}
One can easily show that
\begin{align*}
    \norm{\nabla M_f^{\gamma,\delta}(x) - \nabla M_f^{\gamma}(x)}^2 \leq \frac{\delta}{\gamma^2}.
\end{align*}

With these inexact updates, Algorithm \ref{algorithm:fedexprox} iterates
\begin{align}\label{eq:inexact_update}
    x_{k+1} = x_{k} + \alpha_k\left(\frac{1}{n}\sum_{i=1}^{n}\prox^{\delta}_{\gamma f_i}(x_{k}) - x_{k}\right)
    = x_{k} - \alpha_k \gamma \underbrace{\frac{1}{n}\sum_{i=1}^{n} \nabla M_{f_i}^{\gamma,\delta}(x_{k})}_{\eqdef \nabla M^{\gamma,\delta}(x_{k})},
\end{align}
where
\begin{align}
    \label{eq:GjmDcYpSKqyVnbZ}
    \norm{\nabla M^{\gamma,\delta}(x) - \nabla M^{\gamma}(x)}^2
    & = \norm{\frac{1}{n}\sum_{i=1}^{n} \parens{\nabla M_{f_i}^{\gamma,\delta}(x) - \nabla M_{f_i}^{\gamma}(x)}}^2 \nonumber \\
    & = \frac{1}{\gamma^2} \norm{\frac{1}{n}\sum_{i=1}^{n} \parens{\prox^{\delta}_{\gamma f}(x)-\prox_{\gamma f}(x)}}^2 \nonumber \\
    & \leq \frac{1}{\gamma^2} \frac{1}{n}\sum_{i=1}^{n} \norm{\prox^{\delta}_{\gamma f}(x)-\prox_{\gamma f}(x)}^2
    \leq \frac{\delta}{\gamma^2}.
\end{align}

With this in mind, we proceed to our analysis.

\begin{theorem}\label{thm:fedexprox_pl_inexact}
    Consider the inexact \algname{FedExProx} method in the full participation setting defined in~\eqref{eq:inexact_update}. Let Assumptions~\ref{ass:convex}, \ref{ass:local_lipschitz_constant}, \ref{ass:inter}, and \ref{ass:pl_condition_moreau} hold, and set $\alpha \gamma = \frac{1}{2 L_{\gamma}}$. Then, the iterates of the algorithm satisfy
    \begin{eqnarray*}
        \norm{x_K - \Pi(x_K)}^2
        &\leq& \parens{1 - \frac{\mu^+_{\gamma}}{8L_{\gamma}}}^K \norm{x_0 - \Pi(x_0)}^2 + \frac{20 \delta}{\left(\mu^+_{\gamma}\right)^2 \gamma^2},
    \end{eqnarray*}
    and hence
    \begin{eqnarray}
        \label{eq:bPfek}
        f(x_K) - f(\Pi(x_K))
        \leq \frac{L}{2} \parens{1 - \frac{\mu^+_{\gamma}}{8 L_{\gamma}}}^K \norm{x_0 - \Pi(x_0)}^2 + \frac{10 L \delta}{\left(\mu^+_{\gamma}\right)^2 \gamma^2}.
    \end{eqnarray}
\end{theorem}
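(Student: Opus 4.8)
The plan is to mirror the proof of Theorem~\ref{thm:pl} (the exact P\L\ case), but carry an explicit error term coming from the inexact gradient. Write $\beta \eqdef \alpha\gamma = \nicefrac{1}{2L_{\gamma}}$, abbreviate $g_k \eqdef \nabla M^{\gamma,\delta}(x_k)$ and $\bar g_k \eqdef \nabla M^{\gamma}(x_k)$, and recall from \eqref{eq:GjmDcYpSKqyVnbZ} that $\norm{g_k - \bar g_k}^2 \leq \nicefrac{\delta}{\gamma^2}$. Fix any $x_* \in \cX_*$ (I will eventually take $x_* = \Pi(x_k)$). Starting from the update $x_{k+1} = x_k - \beta g_k$, I would expand $\norm{x_{k+1}-x_*}^2 = \norm{x_k-x_*}^2 - 2\beta\inp{x_k-x_*}{g_k} + \beta^2\norm{g_k}^2$ and split $g_k = \bar g_k + (g_k - \bar g_k)$ in both the inner product and the squared-norm term.

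For the exact part, since $\nabla M^{\gamma}(x_*)=0$ by Lemma~\ref{lemma:mini_equiv_global}, the inner product $\inp{x_k-x_*}{\bar g_k}$ equals $D_{M^{\gamma}}(x_k,x_*)+D_{M^{\gamma}}(x_*,x_k)$, exactly as in Theorem~\ref{thm:pl}. For the squared term I would use $\norm{g_k}^2 \leq 2\norm{\bar g_k}^2 + 2\norm{g_k-\bar g_k}^2$ together with Lemma~\ref{lemma:grad_breg} (and $L_{\gamma}$-smoothness of $M^{\gamma}$ from Lemma~\ref{lemma:m_gamma_convex_smooth}), which bounds $\norm{\bar g_k}^2 \leq L_{\gamma}\parens{D_{M^{\gamma}}(x_k,x_*)+D_{M^{\gamma}}(x_*,x_k)}$. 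The key arithmetic cancellation is that $2\beta^2 L_{\gamma} = \beta$ for $\beta = \nicefrac{1}{2L_{\gamma}}$, so the $D$-terms collapse to $-\beta\parens{D_{M^{\gamma}}(x_k,x_*)+D_{M^{\gamma}}(x_*,x_k)}$, leaving a residual $-2\beta\inp{x_k-x_*}{g_k-\bar g_k} + 2\beta^2\norm{g_k-\bar g_k}^2$ from the inexactness.

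Next I would drop $D_{M^{\gamma}}(x_*,x_k)\geq 0$ (convexity) and apply the P\L\ consequence from Lemma~\ref{lemma:star_conv}, namely $D_{M^{\gamma}}(x_k,x_*) = M^{\gamma}(x_k)-M^{\gamma}(x_*) \geq \tfrac{\mu^{+}_{\gamma}}{2}\norm{x_k-\Pi(x_k)}^2$, choosing $x_* = \Pi(x_k)$ so this equals $\tfrac{\mu^{+}_{\gamma}}{2}\norm{x_k-x_*}^2$. The cross error-term I would control with Young's inequality using parameter $\nicefrac{\mu^{+}_{\gamma}}{4}$, giving $-2\beta\inp{x_k-x_*}{g_k-\bar g_k} \leq \tfrac{\beta\mu^{+}_{\gamma}}{4}\norm{x_k-x_*}^2 + \tfrac{4\beta}{\mu^{+}_{\gamma}}\cdot\tfrac{\delta}{\gamma^2}$, so that exactly half of the contraction $\tfrac{\beta\mu^{+}_{\gamma}}{2}$ survives. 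This yields the one-step recursion $\norm{x_{k+1}-\Pi(x_{k+1})}^2 \leq \parens{1-\tfrac{\mu^{+}_{\gamma}}{8L_{\gamma}}}\norm{x_k-\Pi(x_k)}^2 + E$ with per-step error $E = \parens{\tfrac{4\beta}{\mu^{+}_{\gamma}} + 2\beta^2}\tfrac{\delta}{\gamma^2}$, where I also use $\norm{x_{k+1}-\Pi(x_{k+1})}^2 \leq \norm{x_{k+1}-\Pi(x_k)}^2$. Unrolling the geometric recursion bounds the accumulated error by $\tfrac{E}{1-q}$ with $1-q = \tfrac{\mu^{+}_{\gamma}}{8L_{\gamma}}$; substituting $\beta = \nicefrac{1}{2L_{\gamma}}$ gives $\tfrac{16\delta}{(\mu^{+}_{\gamma})^2\gamma^2} + \tfrac{4\delta}{\mu^{+}_{\gamma}L_{\gamma}\gamma^2}$, and the elementary bound $\mu^{+}_{\gamma}\leq L_{\gamma}$ absorbs the second term into the first to produce the claimed $\tfrac{20\delta}{(\mu^{+}_{\gamma})^2\gamma^2}$. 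Finally, the function-value bound \eqref{eq:bPfek} follows from $L$-smoothness of $f$ via $f(x_K)-f(\Pi(x_K)) \leq \tfrac{L}{2}\norm{x_K-\Pi(x_K)}^2$ (using $\nabla f(\Pi(x_K))=0$), which scales the whole estimate by $\tfrac{L}{2}$ and turns the additive term into $\tfrac{10L\delta}{(\mu^{+}_{\gamma})^2\gamma^2}$.

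The main obstacle is bookkeeping the inexactness cleanly: the cross term $-2\beta\inp{x_k-x_*}{g_k-\bar g_k}$ must be split so as to sacrifice only half of the P\L\ contraction (hence the Young parameter $\nicefrac{\mu^{+}_{\gamma}}{4}$), and the two resulting error contributions must then be shown to collapse to a single clean constant, which is where the inequality $\mu^{+}_{\gamma}\leq L_{\gamma}$ is essential. Everything else is a faithful adaptation of the exact-case argument, with no sampling and hence no expectations required in the full-participation setting.
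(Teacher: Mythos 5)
Your proposal is correct and follows essentially the same route as the paper's proof: the same expansion with the split $g_k = \bar g_k + (g_k - \bar g_k)$, the Jensen bound $\norm{g_k}^2 \leq 2\norm{\bar g_k}^2 + 2\norm{g_k - \bar g_k}^2$ combined with Lemma~\ref{lemma:grad_breg}, the same Young step with parameter $\nicefrac{\mu^+_{\gamma}}{4}$ that sacrifices exactly half of the contraction, and the same use of $\mu^+_{\gamma} \leq L_{\gamma}$ to arrive at the constant $\nicefrac{20\delta}{(\mu^+_{\gamma})^2\gamma^2}$. The only cosmetic difference is that you absorb the $2\beta^2\norm{g_k-\bar g_k}^2$ term after unrolling the recursion while the paper absorbs it into the per-step error $\nicefrac{5\alpha\delta}{\mu^+_{\gamma}\gamma}$ beforehand; the resulting constants are identical.
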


\begin{remark}
    Note that the price one has to pay for inexactness is minimal. To illustrate this, suppose that clients solve the local problems using \algname{GD}. Then, taking
    $$\delta = \frac{\left(\mu^+_{\gamma}\right)^2 \gamma^2 \varepsilon}{20 L},$$
    we ensure that the last term in \eqref{eq:bPfek} is small:
    \begin{align*}
        \frac{10 L \delta}{\left(\mu^+_{\gamma}\right)^2 \gamma^2} \leq \frac{\varepsilon}{2}.
    \end{align*}
    Since problem \eqref{eq:prox} is strongly convex, the local complexity is proportional to
    \begin{align*}
        \cO\left(\kappa \times \log \frac{1}{\delta}\right) = \cO\left(\kappa \times \log \left(\frac{20 L}{\left(\mu^+_{\gamma}\right)^2 \gamma^2 \varepsilon}\right)\right),
    \end{align*}
    where $\kappa \gg 1$ is the condition number. In practice, this logarithmic factor can be ignored and treated as a constant.
\end{remark}

\begin{proof}[Proof of Theorem \ref{thm:fedexprox_pl_inexact}]
    The updates can be written as 
    \begin{align*}
        x_{k+1} = x_{k} - \alpha \gamma \nabla M^{\gamma,\delta}(x_{k}).
    \end{align*}
    Thus
    \begin{align*}
        \norm{x_{k+1} - \Pi(x_{k+1})}^2 &\leq \norm{x_{k+1} - \Pi(x_{k})}^2 \\
        &= \norm{x_k - \Pi(x_{k})}^2 - 2 \alpha \gamma \inp{x_k - \Pi(x_{k})}{\nabla M^{\gamma,\delta}(x_{k})}
        + \alpha^2 \gamma^2 \norm{\nabla M^{\gamma,\delta}(x_{k})}^2 \nonumber \\
        &= \norm{x_k - \Pi(x_{k})}^2 
        - 2 \alpha \gamma \inp{x_k - \Pi(x_{k})}{\nabla M^{\gamma}(x_{k})} \\
        &\quad - 2 \alpha \gamma \inp{x_k - \Pi(x_{k})}{\nabla M^{\gamma,\delta}(x_{k}) - \nabla M^{\gamma}(x_{k})} \\
        &\quad + \alpha^2 \gamma^2 \norm{\nabla M^{\gamma}(x_{k}) + (\nabla M^{\gamma, \delta}(x_{k}) - \nabla M^{\gamma}(x_{k}))}^2.
    \end{align*}
    Using Jensen's inequality, we get
    \begin{align}\label{eq:sidgpohjnbasi}
        \norm{x_{k+1} - \Pi(x_{k+1})}^2
        &\leq \norm{x_k - \Pi(x_{k})}^2 
        - 2 \alpha \gamma \inp{x_k - \Pi(x_{k})}{\nabla M^{\gamma}(x_{k})} \nonumber \\
        &\quad - 2 \alpha \gamma \inp{x_k - \Pi(x_{k})}{\nabla M^{\gamma,\delta}(x_{k}) - \nabla M^{\gamma}(x_{k})} \nonumber \\
        &\quad + 2 \alpha^2 \gamma^2 \norm{\nabla M^{\gamma}(x_{k})}^2 + 2 \alpha^2 \gamma^2 \norm{\nabla M^{\gamma, \delta}(x_{k}) - \nabla M^{\gamma}(x_{k})}^2.
    \end{align}
    Following the same steps as in the proof of Theorem~\ref{thm:pl} gives
    \begin{align*}
        &\norm{x_k - \Pi(x_{k})}^2 
        - 2 \alpha \gamma \inp{x_k - \Pi(x_{k})}{\nabla M^{\gamma}(x_{k})} + 2 \alpha^2 \gamma^2 \norm{\nabla M^{\gamma}(x_{k})}^2 \\
        &\quad\leq \left(1 - \alpha \gamma (1 - \alpha \gamma L_{\gamma}) \mu^+_{\gamma}\right)\norm{x_k - \Pi(x_{k})}^2 \\
        &\quad\leq \left(1 - \alpha \gamma \frac{\mu^+_{\gamma}}{2}\right)\norm{x_k - \Pi(x_{k})}^2
    \end{align*}
    for $\alpha \gamma \leq \frac{1}{2 L_{\gamma}}$, and hence
    \begin{eqnarray*}
        \norm{x_{k+1} - \Pi(x_{k+1})}^2
        &\overset{\eqref{eq:sidgpohjnbasi}}{\leq}& \left(1 - \alpha \gamma \frac{\mu^+_{\gamma}}{2}\right)\norm{x_k - \Pi(x_{k})}^2 \\
        &&- 2 \alpha \gamma \inp{x_k - \Pi(x_{k})}{\nabla M^{\gamma,\delta}(x_{k}) - \nabla M^{\gamma}(x_{k})} \\
        && + 2 \alpha^2 \gamma^2 \norm{\nabla M^{\gamma, \delta}(x_{k}) - \nabla M^{\gamma}(x_{k})}^2.
    \end{eqnarray*}
    Young's inequality yields
    \begin{align*}
        \norm{x_{k+1} - \Pi(x_{k+1})}^2
        &\leq \left(1 - \alpha \gamma \frac{\mu^+_{\gamma}}{2}\right)\norm{x_k - \Pi(x_{k})}^2 
        + \alpha \gamma \lambda \norm{x_k - \Pi(x_{k})}^2 \\
        &\quad+ \frac{\alpha \gamma}{\lambda} \norm{\nabla M^{\gamma,\delta}(x_{k}) - \nabla M^{\gamma}(x_{k})}^2
        + 2 \alpha^2 \gamma^2 \norm{\nabla M^{\gamma, \delta}(x_{k}) - \nabla M^{\gamma}(x_{k})}^2,
    \end{align*}
    where $\lambda > 0$ is a free parameter. Taking $\lambda = \frac{\mu^+_{\gamma}}{4},$ we obtain
    \begin{align*}
        \norm{x_{k+1} - \Pi(x_{k+1})}^2
        &\leq \left(1 - \alpha \gamma \frac{\mu^+_{\gamma}}{4}\right)\norm{x_k - \Pi(x_{k})}^2 
        + \frac{4 \alpha \gamma}{\mu^+_{\gamma}} \norm{\nabla M^{\gamma,\delta}(x_{k}) - \nabla M^{\gamma}(x_{k})}^2 \\
        &\quad + 2 \alpha^2 \gamma^2 \norm{\nabla M^{\gamma, \delta}(x_{k}) - \nabla M^{\gamma}(x_{k})}^2 \\
        &\leq \left(1 - \alpha \gamma \frac{\mu^+_{\gamma}}{4}\right)\norm{x_k - \Pi(x_{k})}^2 
        + \frac{5 \alpha \gamma}{\mu^+_{\gamma}} \norm{\nabla M^{\gamma,\delta}(x_{k}) - \nabla M^{\gamma}(x_{k})}^2,
    \end{align*}
    where the last inequality due to $\alpha \gamma \leq \frac{1}{2 L_{\gamma}}$ and $L_{\gamma} \geq \mu^+_{\gamma}.$ Lastly, due to \eqref{eq:GjmDcYpSKqyVnbZ}, we have
    \begin{align*}
        &\norm{x_{k+1} - \Pi(x_{k+1})}^2 \leq \left(1 - \alpha \gamma \frac{\mu^+_{\gamma}}{4}\right)\norm{x_k - \Pi(x_{k})}^2 + \frac{5 \alpha \delta}{\mu^+_{\gamma} \gamma}.
    \end{align*}
    Unrolling the recursion, one can show that 
    \begin{align*}
        &\norm{x_{K} - \Pi(x_{K})}^2 \leq \left(1 - \alpha \gamma \frac{\mu^+_{\gamma}}{4}\right)^{K} \norm{x_0 - \Pi(x_{0})}^2 + \frac{20 \delta}{\left(\mu^+_{\gamma}\right)^2 \gamma^2},
    \end{align*}
    and hence
    \begin{eqnarray*}
        f(x_K) - f(\Pi(x_K))
        &\leq& \frac{L}{2} \norm{x_K - \Pi(x_K)}^2 \\
        &\leq& \frac{L}{2} \left(1 - \alpha \gamma \frac{\mu^+_{\gamma}}{4}\right)^{K} \norm{x_0 - \Pi(x_{0})}^2 + \frac{10 L \delta}{\left(\mu^+_{\gamma}\right)^2 \gamma^2}.
    \end{eqnarray*}
\end{proof}

\newpage

\section{Adaptive Extrapolation}

The optimal extrapolation parameter values in Theorems \ref{thm:quad}, \ref{thm:quad_iter_stoch}, \ref{thm:fedexprox_pl_iter} and \ref{thm:fedexprox_pl_iter_stoch} depend on $L_{\gamma}$ (or~ $L_{\gamma, S}$). This dependency can be avoided by employing \textit{adaptive} extrapolation strategies \citep{horvath2022adaptive}. Specifically, we analyze methods based on gradient diversity (\algname{FedExProx-GraDS}) and the Polyak stepsize (\algname{FedExProx-StoPS}).

\subsection{Full Participation Case}

\FEDEXPROXQITERADA*

\begin{proof}
    We start with the standard decomposition
    \begin{eqnarray}\label{eq:opiuiftrre}
        \norm{x_{k+1}-\Pi(x_{k+1})}^{2}
        &\leq& \norm{x_{k+1}-\Pi(x_k)}^{2} \nonumber \\
        &=& \norm{x_{k}-\Pi(x_k)}^{2} - 2 \alpha_{k} \gamma \inp{\frac{1}{n}\sum_{i=1}^n \nabla M^{\gamma}_{f_i}(x_{k})}{x_{k}-\Pi(x_k)} \nonumber \\
        &&+ \alpha_{k}^2 \gamma^2 \norm{\frac{1}{n}\sum_{i=1}^n \nabla M^{\gamma}_{f_i}(x_{k})}^2 \nonumber \\
         &\overset{\eqref{eq:nabla_mfi}}{=}& \norm{x_{k}-\Pi(x_k)}^{2} - 2 \alpha_{k} \gamma (x_{k} - \Pi(x_k))^\top \mM (x_{k} - \Pi(x_k)) \nonumber \\
        &&+ \alpha_{k}^2 \gamma^2 \norm{\frac{1}{n}\sum_{i=1}^n \nabla M^{\gamma}_{f_i}(x_{k})}^2.
    \end{eqnarray}
    Now, let us consider the two adaptive extrapolation strategies.
    \begin{enumerate}
        \item (\algname{FedExProx-GraDS})
        In this case, the last term of \eqref{eq:opiuiftrre} can be rewritten as
        \begin{eqnarray*}
            &&\hspace{-0.5cm}\alpha_k^2 \gamma^2 \norm{\frac{1}{n}\sum_{i=1}^n \nabla  M^{\gamma}_{f_i}(x_{k})}^{2} \\
            &=& \alpha_k \gamma^2 \frac{1}{n}\sum_{i=1}^n \norm{\nabla M^{\gamma}_{f_i}(x_{k})}^{2} \\
            &\overset{\eqref{eq:nabla_mfi}}{=}& \alpha_k \gamma^2 \frac{1}{n}\sum_{i=1}^n \norm{\frac{1}{\gamma} \parens{\mI - \parens{\gamma \mA_i + \mI}^{-1} } (x_k - \Pi(x_k))}^{2} \\
            &=& \alpha_k (x_k - \Pi(x_k))^\top \parens{\frac{1}{n}\sum_{i=1}^n \parens{\mI - \parens{\gamma \mA_i + \mI}^{-1}}^\top \parens{\mI - \parens{\gamma \mA_i + \mI}^{-1}}} (x_k - \Pi(x_k)) \\
            &\leq& \alpha_k \max_{i\in[n]}\lambda_{\max}\parens{\mI - \parens{\gamma \mA_i + \mI}^{-1}} (x_k - \Pi(x_k))^\top \parens{\frac{1}{n}\sum_{i=1}^n \parens{\mI - \parens{\gamma \mA_i + \mI}^{-1}}} (x_k - \Pi(x_k)) \\
            &=& \alpha_k \gamma \frac{\gamma \max_{i\in[n]} \lambda_{\max}(\mA_{i})}{1 + \gamma \max_{i\in[n]} \lambda_{\max}(\mA_{i})} (x_k - \Pi(x_k))^\top \mM (x_k - \Pi(x_k)).
        \end{eqnarray*}
        Substituting this in \eqref{eq:opiuiftrre} gives
        \begin{eqnarray*}
            &&\hspace{-0.5cm}\norm{x_{k+1}-\Pi(x_{k+1})}^{2} \\
            &\leq& \norm{x_{k}-\Pi(x_k)}^{2} - 2 \alpha_{k} \gamma (x_{k} - \Pi(x_k))^\top \mM (x_{k} - \Pi(x_k)) \\
            &&+ \alpha_k \gamma \frac{\gamma \max_{i\in[n]} \lambda_{\max}(\mA_{i})}{1 + \gamma \max_{i\in[n]} \lambda_{\max}(\mA_{i})} (x_k - \Pi(x_k))^\top \mM (x_k - \Pi(x_k)) \\
            &=& \norm{x_{k}-\Pi(x_k)}^{2} - \alpha_{k} \gamma \frac{2 + \gamma \max_{i\in[n]} \lambda_{\max}(\mA_{i})}{1 + \gamma \max_{i\in[n]} \lambda_{\max}(\mA_{i})} (x_k - \Pi(x_k))^\top \mM (x_k - \Pi(x_k)) \\
            &\leq& \parens{1 - \alpha_{k} \gamma \frac{2 + \gamma \max_{i\in[n]} \lambda_{\max}(\mA_{i})}{1 + \gamma \max_{i\in[n]} \lambda_{\max}(\mA_{i})} \lambda_{\min}^+(\gamma)} \norm{x_k - \Pi(x_k)}^2,
        \end{eqnarray*}
        where the last inequality follows from the fact that $x_{k}-\Pi(x_{k}) \in \range(\mM)$ (see Lemma~\ref{lemma:projections_SGD_q}).
        Applying this bound iteratively for $k=0,\ldots,K-1$ gives
        \begin{eqnarray*}
            &&\hspace{-1cm}\norm{x_K-\Pi(x_K)}^{2} \\
            &\leq& \parens{1 - \gamma \min_{k=0,\ldots,K-1} \alpha_{k} \frac{2 + \gamma \max_{i\in[n]}\lambda_{\max}(\mA_{i})}{1 + \gamma \max_{i\in[n]}\lambda_{\max}(\mA_{i})} \lambda_{\min}^+(\gamma)}^K \norm{x_0 - \Pi(x_0)}^2.
        \end{eqnarray*}

        \item (\algname{FedExProx-StoPS})
        First, note that $L_{\gamma}$--smoothness of $M^{\gamma}$ implies that
        \begin{align*}
            M^{\gamma}(x_{k}) - \inf  M^{\gamma}
            \geq \frac{1}{2 L_{\gamma}} \norm{\nabla  M^{\gamma}(x_{k})}^{2},
        \end{align*}
        and hence $\alpha_{k,S}(x_k)\geq \frac{1}{2\gamma L_{\gamma}}$.
        
        Now, the last term of \eqref{eq:opiuiftrre} is
        \begin{eqnarray*}
            \alpha_k^2 \gamma^2 \norm{\frac{1}{n}\sum_{i=1}^{n} \nabla  M^{\gamma}_{f_i}(x_{k})}^{2}
            &=& \alpha_k \gamma \frac{1}{n}\sum_{i=1}^{n} \parens{M^{\gamma}_{f_i}(x_{k})-\inf  M^{\gamma}_{f_i}} \\
            &=& \alpha_k \gamma \parens{M^{\gamma}(x_{k}) - M^{\gamma}(\Pi(x_k))} \\
            &\overset{\eqref{eq:fedexprox_m2}}{=}& \frac{1}{2} \alpha_k \gamma (x_k-\Pi(x_k))^{T} \mM (x_k-\Pi(x_k)).
        \end{eqnarray*}
        Therefore, using Lemma~\ref{lemma:projections_SGD_q}, we get
        \begin{eqnarray*}
            \norm{x_{k+1}-\Pi(x_{k+1})}^{2}
            &\overset{\eqref{eq:opiuiftrre}}{\leq}& \norm{x_{k}-\Pi(x_k)}^{2}
            - 2 \alpha_{k} \gamma (x_{k} - \Pi(x_k))^\top \mM (x_{k} - \Pi(x_k)) \\
            &&+ \frac{1}{2} \alpha_k \gamma (x - \Pi(x_k))^{T} \mM (x - \Pi(x_k)) \\
            &\leq& \parens{1 - \frac{3}{2} \alpha_{k} \gamma \lambda_{\min}^+(\gamma)} \norm{x_k - \Pi(x_k)}^2.
        \end{eqnarray*}
        Applying this bound iteratively for $k=0,\ldots,K-1$ gives
        \begin{eqnarray*}
            \norm{x_K-\Pi(x_K)}^{2}
            &\leq& \parens{1 - \frac{3}{2} \min_{k=0,\ldots,K-1} \alpha_{k} \gamma \lambda_{\min}^+(\gamma)}^K \norm{x_0 - \Pi(x_0)}^2.
        \end{eqnarray*}
    \end{enumerate}
    
\end{proof}

\subsection{Partial Participation Case}\label{sec:ada_stoch}

\begin{theorem}\label{thm:quad_iter_ada_stoch}
    Fix any $\gamma > 0$ and consider solving non-strongly convex quadratic optimization problem~\eqref{eq:main_problem}, where $f_i(x) = \frac{1}{2} x^\top \mA_i x - b_i^\top x$ for all $i \in [n],$ with $\mA_i \in \textnormal{Sym}^{d}_{+}$ and $b_i \in \R^d$. Let Assumption~\ref{ass:inter} hold and consider two adaptive extrapolation strategies: 
    \begin{enumerate}
        \item (\algname{FedExProx-GraDS}) Set
        \begin{align*}
            \alpha_k = \alpha_{k,S}^{\algname{GraDS}}(x_k, \cS_k) \eqdef \frac{\frac{1}{S}\sum_{i\in\cS^k} \norm{\nabla M^{\gamma}_{f_i}(x_{k})}^{2}}{\norm{\frac{1}{S}\sum_{i\in\cS^k} \nabla M^{\gamma}_{f_i}(x_{k})}^{2}}
            \geq 1.
        \end{align*}
        Then, the iterates of Algorithm \ref{algorithm:batch_fedexprox} satisfy
        \begin{eqnarray}\label{eq:grads_iter_stoch}
            \Exp{\norm{x_K-\Pi(x_K)}^{2}}
            &\leq& \parens{1 - \inf \alpha_{k} \gamma \frac{2 + \gamma L_{\max}}{1 + \gamma L_{\max}} \lambda_{\min}^+(\gamma)}^K \norm{x_0 - \Pi(x_0)}^2,
        \end{eqnarray}
        where
        \begin{align*}
            \inf \alpha_{k} \eqdef \inf_{x\in\R^d, \cS\subseteq[n], |\cS|=S} \alpha_{k,S}^{\algname{GraDS}}(x, \cS).
        \end{align*}
        \item (\algname{FedExProx-StoPS}) Set
        \begin{align*}
            \alpha_k = \alpha_{k,S}^{\algname{StoPS}}(x_k, \cS_k) \eqdef \frac{\frac{1}{S}\sum_{i\in\cS^k} \parens{M^{\gamma}_{f_i}(x_{k})-\inf  M^{\gamma}_{f_i}}}{\gamma\norm{\frac{1}{n}\sum_{i\in\cS^k} \nabla  M^{\gamma}_{f_i}(x_{k})}^{2}}.
        \end{align*}
        Then, the iterates of Algorithm \ref{algorithm:batch_fedexprox} satisfy
        \begin{eqnarray}\label{eq:stops_iter_stoch}
            \Exp{\norm{x_K-\Pi(x_K)}^{2}}
            &\leq& \parens{1 - \frac{3}{2} \inf \alpha_{k} \gamma \lambda_{\min}^+(\gamma)}^K \norm{x_0 - \Pi(x_0)}^2,
        \end{eqnarray}
        where
        \begin{align*}
            \inf \alpha_{k} \eqdef \inf_{x\in\R^d, \cS\subseteq[n], |\cS|=S} \alpha_{k,S}^{\algname{StoPS}}(x, \cS).
        \end{align*}
    \end{enumerate}
\end{theorem}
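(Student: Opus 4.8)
The plan is to replicate the full-participation argument of Theorem~\ref{thm:quad_iter_ada}, replacing the deterministic Hessian $\mM$ by its sampled counterpart $\mM_k \eqdef \frac{1}{\gamma}\parens{\mI - \frac{1}{S}\sum_{i\in\cS_k}(\gamma\mA_i+\mI)^{-1}}$ introduced in the proof of Theorem~\ref{thm:fedexprox_quad_stoch_iter}, and then to neutralize the extra randomness coming from the sampling. First I would expand, exactly as in~\eqref{eq:opiuiftrre} but with the minibatch update $x_{k+1}=x_k-\alpha_k\gamma\frac{1}{S}\sum_{i\in\cS_k}\nabla M^\gamma_{f_i}(x_k)$ and the projection inequality $\norm{x_{k+1}-\Pi(x_{k+1})}^2\leq\norm{x_{k+1}-\Pi(x_k)}^2$,
\begin{align*}
    \norm{x_{k+1}-\Pi(x_{k+1})}^2
    &\leq \norm{x_k-\Pi(x_k)}^2 - 2\alpha_k\gamma(x_k-\Pi(x_k))^\top \mM_k (x_k-\Pi(x_k)) \\
    &\quad + \alpha_k^2\gamma^2\norm{\tfrac{1}{S}\sum_{i\in\cS_k}\nabla M^\gamma_{f_i}(x_k)}^2,
\end{align*}
where the cross term uses the identity $\frac{1}{S}\sum_{i\in\cS_k}\nabla M^\gamma_{f_i}(x_k)=\mM_k(x_k-\Pi(x_k))$ that follows from~\eqref{eq:nabla_mfi} with $x_*=\Pi(x_k)$.

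Second, I would use the defining ratios to collapse the last term. For \algname{FedExProx-GraDS}, the definition of $\alpha_k$ gives $\alpha_k\norm{\frac1S\sum_{i\in\cS_k}\nabla M^\gamma_{f_i}(x_k)}^2=\frac1S\sum_{i\in\cS_k}\norm{\nabla M^\gamma_{f_i}(x_k)}^2$, and the operator bound $(\mI-(\gamma\mA_i+\mI)^{-1})^2\preceq\frac{\gamma L_{\max}}{1+\gamma L_{\max}}(\mI-(\gamma\mA_i+\mI)^{-1})$ (valid since the eigenvalues of $\mI-(\gamma\mA_i+\mI)^{-1}$ are $\frac{\gamma\lambda_j(\mA_i)}{1+\gamma\lambda_j(\mA_i)}\le\frac{\gamma L_{\max}}{1+\gamma L_{\max}}$) turns this into $\alpha_k\gamma\frac{\gamma L_{\max}}{1+\gamma L_{\max}}(x_k-\Pi(x_k))^\top\mM_k(x_k-\Pi(x_k))$, exactly as in the full-participation proof but with $\mM_k$. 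For \algname{FedExProx-StoPS}, the definition yields $\alpha_k^2\gamma^2\norm{\frac1S\sum_{i\in\cS_k}\nabla M^\gamma_{f_i}(x_k)}^2=\alpha_k\gamma\frac1S\sum_{i\in\cS_k}(M^\gamma_{f_i}(x_k)-\inf M^\gamma_{f_i})$; since under Assumption~\ref{ass:inter} the point $\Pi(x_k)$ minimizes every $M^\gamma_{f_i}$ (Lemma~\ref{lemma:mini_equiv_local}), this equals $\frac1S\sum_{i\in\cS_k}\parens{M^\gamma_{f_i}(x_k)-M^\gamma_{f_i}(\Pi(x_k))}$, which by~\eqref{eq:fedexprox_mfi} (with $x_*=\Pi(x_k)$, the constant terms cancelling) is precisely $\frac12(x_k-\Pi(x_k))^\top\mM_k(x_k-\Pi(x_k))$. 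In both cases I reach a per-realization bound $\norm{x_{k+1}-\Pi(x_{k+1})}^2\leq\norm{x_k-\Pi(x_k)}^2-c\,\alpha_k\gamma(x_k-\Pi(x_k))^\top\mM_k(x_k-\Pi(x_k))$, with $c=\frac{2+\gamma L_{\max}}{1+\gamma L_{\max}}$ for \algname{GraDS} and $c=\frac32$ for \algname{StoPS}.

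The decisive step, and the main obstacle, is that $\alpha_k$ and $\mM_k$ are both functions of the same random set $\cS_k$, so the expectation does not factor and I cannot simply insert $\ExpSub{k}{\alpha_k}$. I would resolve this by lower-bounding $\alpha_k\geq\inf\alpha_k$ pointwise: because $\mM_k\succeq0$ renders the accompanying quadratic form nonnegative, this replacement only weakens the subtracted term, giving $\norm{x_{k+1}-\Pi(x_{k+1})}^2\leq\norm{x_k-\Pi(x_k)}^2-c\,\inf\alpha_k\,\gamma(x_k-\Pi(x_k))^\top\mM_k(x_k-\Pi(x_k))$ with $\inf\alpha_k$ now deterministic. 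Taking $\ExpSub{k}{\cdot}$ and invoking the unbiasedness $\ExpSub{k}{\mM_k}=\mM$ of nice sampling (the first-moment case of Lemma~\ref{lemma:nice_m}, via $\ExpSub{k}{\chi_i}=S/n$) replaces $\mM_k$ by $\mM$.

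Finally, since $x_k-\Pi(x_k)\in\range(\mM)$ (Lemma~\ref{lemma:projections_SGD_q}), I bound $(x_k-\Pi(x_k))^\top\mM(x_k-\Pi(x_k))\geq\lambda_{\min}^+(\gamma)\norm{x_k-\Pi(x_k)}^2$, take full expectation, and unroll the resulting contraction to obtain~\eqref{eq:grads_iter_stoch} and~\eqref{eq:stops_iter_stoch} respectively. I would also flag the minor typo that the denominator of the \algname{StoPS} ratio should read $\gamma\norm{\frac1S\sum_{i\in\cS_k}\nabla M^\gamma_{f_i}(x_k)}^2$ (with $\frac1S$, not $\frac1n$), which is exactly what makes the telescoping cancellation in the second paragraph valid.
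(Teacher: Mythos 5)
Your proposal is correct and follows essentially the same route as the paper's own proof: the same decomposition with the sampled matrix $\mM_k$, the same collapsing of the quadratic term via the definitions of $\alpha_k^{\algname{GraDS}}$ and $\alpha_k^{\algname{StoPS}}$ (using \eqref{eq:nabla_mfi} and \eqref{eq:fedexprox_mfi}), the pointwise bound $\alpha_k \geq \inf\alpha_k$ justified by $\mM_k \succeq 0$ before taking conditional expectation with $\ExpSub{k}{\mM_k}=\mM$, and the final range argument via Lemma~\ref{lemma:projections_SGD_q}. Your flag of the $\nicefrac{1}{n}$ versus $\nicefrac{1}{S}$ typo in the \algname{StoPS} denominator is also consistent with the paper, whose proof indeed uses the $\nicefrac{1}{S}$ normalization.
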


\begin{remark}
    In the single node setting ($S=1$),
    \begin{align*}
        \inf_{x\in\R^d, \cS\subseteq[n], |\cS|=1} \alpha_{k,1}^{\algname{GraDS}}(x, \cS) = 1.
    \end{align*}
    However, as more clients participate, the extrapolation parameter may exceed $1$, resulting in improved performance of \algname{FedExProx-GraDS}.
\end{remark}
\begin{remark}
    In the single node setting ($S=1$), Lemma \ref{lemma:moreau_smooth} implies that
    \begin{align*}
        \inf_{x\in\R^d, \cS\subseteq[n], |\cS|=1} \alpha_{k,1}^{\algname{StoPS}}(x, \cS) = \frac{1}{2} \frac{1+\gamma L_{\max}}{\gamma L_{\max}}.
    \end{align*}
    As the number of participating clients increases, the extrapolation parameter may exceed this bound, leading to better performance of \algname{FedExProx-StoPS}.
\end{remark}

\begin{proof}[Proof of Theorem \ref{thm:quad_iter_ada_stoch}]
    As in the full batch case, we start with the decomposition
    \begin{eqnarray}\label{eq:tdukctnfx}
        \norm{x_{k+1}-\Pi(x_{k+1})}^{2}
        &\leq& \norm{x_{k+1}-\Pi(x_k)}^{2} \nonumber \\
        &=& \norm{x_{k}-\Pi(x_k)}^{2} - 2 \alpha_{k} \gamma \inp{\frac{1}{S}\sum_{i\in\cS^k} \nabla M^{\gamma}_{f_i}(x_{k})}{x_{k}-\Pi(x_k)} \nonumber \\
        &&+ \alpha_{k}^2 \gamma^2 \norm{\frac{1}{S}\sum_{i\in\cS^k} \nabla M^{\gamma}_{f_i}(x_{k})}^2 \nonumber \\
         &\overset{\eqref{eq:nabla_mfi}}{=}& \norm{x_{k}-\Pi(x_k)}^{2} - 2 \alpha_{k} \gamma (x_{k} - \Pi(x_k))^\top \mM_k (x_{k} - \Pi(x_k)) \nonumber \\
        &&+ \alpha_{k}^2 \gamma^2 \norm{\frac{1}{S}\sum_{i\in\cS^k} \nabla M^{\gamma}_{f_i}(x_{k})}^2,
    \end{eqnarray}
    where $\mM_k \eqdef \frac{1}{\gamma} \parens{\mI - \frac{1}{S}\sum_{i\in\cS_k} (\gamma \mA_i+\mI)^{-1}}$.
    Now, let us consider the two adaptive extrapolation strategies.
    \begin{enumerate}
        \item (\algname{FedExProx-GraDS})
        In this case, the last term of \eqref{eq:tdukctnfx} can be rewritten as
        \begin{eqnarray*}
            &&\hspace{-0.5cm}\alpha_k^2 \gamma^2 \norm{\frac{1}{S}\sum_{i\in\cS^k} \nabla  M^{\gamma}_{f_i}(x_{k})}^{2} \\
            &=& \alpha_k \gamma^2 \frac{1}{S}\sum_{i\in\cS^k} \norm{\nabla M^{\gamma}_{f_i}(x_{k})}^{2} \\
            &\overset{\eqref{eq:nabla_mfi}}{=}& \alpha_k \gamma^2 \frac{1}{S}\sum_{i\in\cS^k} \norm{\frac{1}{\gamma} \parens{\mI - \parens{\gamma \mA_i + \mI}^{-1} } (x_k - \Pi(x_k))}^{2} \\
            &=& \alpha_k (x_k - \Pi(x_k))^\top \parens{\frac{1}{S}\sum_{i\in\cS^k} \parens{\mI - \parens{\gamma \mA_i + \mI}^{-1}}^\top \parens{\mI - \parens{\gamma \mA_i + \mI}^{-1}}} (x_k - \Pi(x_k)) \\
            &\leq& \alpha_k \max_{i\in[n]}\lambda_{\max}\parens{\mI - \parens{\gamma \mA_i + \mI}^{-1}} (x_k - \Pi(x_k))^\top \parens{\frac{1}{S}\sum_{i\in\cS^k} \parens{\mI - \parens{\gamma \mA_i + \mI}^{-1}}} (x_k - \Pi(x_k)) \\
            &=& \alpha_k \gamma \frac{\gamma \max_{i\in[n]} \lambda_{\max}(\mA_{i})}{1 + \gamma \max_{i\in[n]} \lambda_{\max}(\mA_{i})} (x_k - \Pi(x_k))^\top \mM_k (x_k - \Pi(x_k)).
        \end{eqnarray*}
        Substituting this in \eqref{eq:tdukctnfx} gives
        \begin{eqnarray*}
            &&\hspace{-0.5cm}\norm{x_{k+1}-\Pi(x_{k+1})}^{2} \\
            &\leq& \norm{x_{k}-\Pi(x_k)}^{2} - 2 \alpha_{k} \gamma (x_{k} - \Pi(x_k))^\top \mM_k (x_{k} - \Pi(x_k)) \\
            &&+ \alpha_k \gamma \frac{\gamma \max_{i\in[n]} \lambda_{\max}(\mA_{i})}{1 + \gamma \max_{i\in[n]} \lambda_{\max}(\mA_{i})} (x_k - \Pi(x_k))^\top \mM_k (x_k - \Pi(x_k)) \\
            &=& \norm{x_{k}-\Pi(x_k)}^{2} - \alpha_{k} \gamma \frac{2 + \gamma \max_{i\in[n]} \lambda_{\max}(\mA_{i})}{1 + \gamma \max_{i\in[n]} \lambda_{\max}(\mA_{i})} (x_k - \Pi(x_k))^\top \mM_k (x_k - \Pi(x_k)) \\
            &\leq& \norm{x_{k}-\Pi(x_k)}^{2} - \inf \alpha_{k} \gamma \frac{2 + \gamma \max_{i\in[n]} \lambda_{\max}(\mA_{i})}{1 + \gamma \max_{i\in[n]} \lambda_{\max}(\mA_{i})} (x_k - \Pi(x_k))^\top \mM_k (x_k - \Pi(x_k)),
        \end{eqnarray*}
        where we view $\alpha_{k}$ as a function of $x$ an $\cS$.
        Taking expectation conditioned on $x_k$, we obtain
        \begin{eqnarray*}
            &&\hspace{-4mm}\ExpSub{k}{\norm{x_{k+1}-\Pi(x_{k+1})}^{2}} \\
            &\leq& \norm{x_{k}-\Pi(x_k)}^{2} \\
            &&- \inf \alpha_{k} \gamma \frac{2 + \gamma \max_{i\in[n]} \lambda_{\max}(\mA_{i})}{1 + \gamma \max_{i\in[n]} \lambda_{\max}(\mA_{i})} (x_k - \Pi(x_k))^\top \ExpSub{k}{\mM_k} (x_k - \Pi(x_k)) \\
            &=& \norm{x_{k}-\Pi(x_k)}^{2} - \inf \alpha_{k} \gamma \frac{2 + \gamma \max_{i\in[n]} \lambda_{\max}(\mA_{i})}{1 + \gamma \max_{i\in[n]} \lambda_{\max}(\mA_{i})} (x_k - \Pi(x_k))^\top \mM (x_k - \Pi(x_k)) \\
            &\leq& \parens{1 - \inf \alpha_{k} \gamma \frac{2 + \gamma \max_{i\in[n]} \lambda_{\max}(\mA_{i})}{1 + \gamma \max_{i\in[n]} \lambda_{\max}(\mA_{i})} \lambda_{\min}^+(\gamma)} \norm{x_k - \Pi(x_k)}^2,
        \end{eqnarray*}
        where the last inequality follows from the fact that $x_{k}-\Pi(x_{k}) \in \range(\mM)$ (see Lemma~\ref{lemma:projections_SGD_q}).
        Taking full expectation and applying this bound iteratively for $k=0,\ldots,K-1$ gives the final result.

        \item (\algname{FedExProx-StoPS})
        In this case, the last term of \eqref{eq:tdukctnfx} is
        \begin{eqnarray*}
            &&\hspace{-1cm}\alpha_k^2 \gamma^2 \norm{\frac{1}{S}\sum_{i\in\cS^k} \nabla  M^{\gamma}_{f_i}(x_{k})}^{2} \\
            &=& \alpha_k \gamma \frac{1}{S}\sum_{i\in\cS^k} \parens{M^{\gamma}_{f_i}(x_{k})-\inf  M^{\gamma}_{f_i}} \\
            &\overset{\eqref{eq:fedexprox_mfi}}{=}& \frac{1}{2} \alpha_k \frac{1}{S}\sum_{i\in\cS^k} (x_k-\Pi(x_k))^{T}(\mI-(\gamma \mA_{i} + \mI)^{-1})(x_k-\Pi(x_k)) \\
            &=& \frac{1}{2} \alpha_k \gamma (x_k-\Pi(x_k))^{T} \mM_k (x_k-\Pi(x_k)).
        \end{eqnarray*}
        Therefore, \eqref{eq:tdukctnfx} gives
        \begin{eqnarray*}
            \norm{x_{k+1}-\Pi(x_{k+1})}^{2}
            &\leq& \norm{x_{k}-\Pi(x_k)}^{2} - 2 \alpha_{k} \gamma (x_{k} - \Pi(x_k))^\top \mM_k (x_{k} - \Pi(x_k)) \nonumber \\
            &&+ \frac{1}{2} \alpha_k \gamma (x_k-\Pi(x_k))^{T} \mM_k (x_k-\Pi(x_k)) \\
            &\leq& \norm{x_{k}-\Pi(x_k)}^{2} - \frac{3}{2} \inf \alpha_k \gamma (x_k-\Pi(x_k))^{T} \mM_k (x_k-\Pi(x_k)).
        \end{eqnarray*}
        Taking full expectation and applying Lemma~\ref{lemma:projections_SGD_q}, we obtain
        \begin{eqnarray*}
            \Exp{\norm{x_{k+1}-\Pi(x_{k+1})}^{2}}
            &\leq& \parens{1 - \frac{3}{2} \inf \alpha_{k} \gamma \lambda_{\min}^+(\gamma)} \norm{x_k - \Pi(x_k)}^2,
        \end{eqnarray*}
        and hence
        \begin{eqnarray*}
            \Exp{\norm{x_K-\Pi(x_K)}^{2}}
            &\leq& \parens{1 - \frac{3}{2} \inf \alpha_{k} \gamma \lambda_{\min}^+(\gamma)}^K \norm{x_0 - \Pi(x_0)}^2.
        \end{eqnarray*}
    \end{enumerate}
\end{proof}

\newpage

\section{Experiments}\label{sec:experiments}

\subsection{Experimental Setup}

To verify our theoretical results, we assess the empirical time complexity of \algname{FedExProx} as a function of~$\gamma$. We consider two types of optimization problems: synthetic quadratic optimization tasks and classification problems with a smooth hinge loss function. In both cases, local updates are computed using \algname{GD}. Below, we first outline the methodology common to both experiments, followed by a detailed description of the specific settings and their corresponding results.

\textbf{Total empirical time complexity.} As described in Section \ref{sec:time_cond}, we assume that one local iteration of \algname{GD} takes $\tau$ seconds. Thus, the time needed by worker $i$ to find $\prox_{\gamma f_{i}}(x_k)$ at global iteration $k$ is proportional to $\tau \times n_{ik},$ where $n_{ik}$ is the number of \algname{GD} iterations needed to find $\prox_{\gamma f_{i}}(x_{k})$ to a given fixed accuracy. In the full participation case, the total empirical time complexity is given by
\begin{align}
    \label{eq:emperical_time_compl}
    \sum_{k=0}^{K-1} \left(\ct + \tau \max_{i \in [n]} n_{ki}\right)
\end{align}
where $K$ is the number of global iterations needed for \algname{FedExProx} to converge to the desired accuracy and $\ct$ is the communication cost. In experiments involving client sampling, we consider the complexity
\begin{align}
    \label{eq:emperical_time_compl_s}
    \sum_{k=0}^{K-1} \left(\ct + \tau \max_{i \in \cS_{k}} n_{ki}\right)
\end{align}
instead, where~$\cS_{k}$ is the random subset of clients sampled at global iteration $k$.
Notice that we take the maximum over $n_{\cdot i}$ because the overall time is determined by the slowest client. In all experiments, we fix $\tau=1$ and vary $\eta$, effectively considering the ratio $\nicefrac{\eta}{\tau}$.

To ensure robust experiments, we count the number of \algname{GD} steps and communications $K$ and apply formula \eqref{eq:emperical_time_compl} (or \eqref{eq:emperical_time_compl_s}). This proxy for real wall-clock time is robust, reliable, reflects reality, and allows controlled experiments. The plots report the empirical time complexity required to find $x^k$ such that $f(x^k) - \inf_{x\in\R^d}f(x) \leq 10^{-3}$.

One of experimental tasks is to verify that this empirical complexity aligns with the theoretical complexity derived in \eqref{eq:fedexprox_quad_time_stoch} (see Section~\ref{sec:compare_theory_practice}).

\subsection{Experiments with Quadratic Optimization Problems}
\label{sec:exp_quad}

\begin{figure*}[t]
    \centering
    \includegraphics[width=0.95\textwidth]{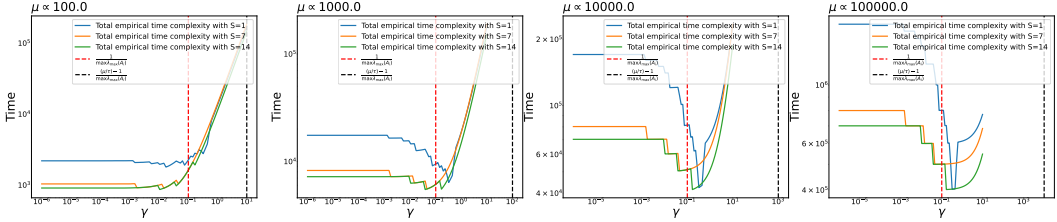}
    \caption{Empirical time complexities of \algname{FedExProx} with partial client participation on a quadratic optimization task for $S \in \{1, 7, 14\}$ clients participating in each round.}
    \label{fig:partial_quadratics}
\end{figure*}

In the first set of experiments, we consider quadratic optimization problems of the form $$f(x) = \frac{1}{n}\sum_{i=1}^{n}\frac{1}{2}x^{\top} \mA_{i}x,$$ where $\mA_{i} \in \textnormal{Sym}^{7}_{+}$, $i\in[n]$ are random positive semidefinite matrices with the smallest eigenvalue equal to zero. In particular, each worker's matrix is generated as $\mA_i = \mQ_i \mD \mQ_i^T$, where $\mQ_i$ is a random orthogonal matrix from QR decomposition of a $d\times d$ matrix with standard normal entries. The diagonal matrix $\mD$ is constructed by uniformly sampling eigenvalues between $0$ and a maximum drawn from the interval $[5,10]$.
Each worker $i$ computes the proximal mappings using \algname{GD} with a stepsize of $\nicefrac{1}{\bar{L}_i}$, where $\bar{L}_i = \lambda_{\max}(\mA_{i})+1/\gamma$ is the smoothness constant of the local subproblem~\eqref{eq:prox}.
The extrapolation parameter $\alpha$ is set to its optimal value from Theorem~\ref{thm:quad}, using an explicitly computed value of $L_{\gamma}$. The data is distributed across $n = 14$ workers.

Figures~\ref{fig:full_quadratics} and \ref{fig:partial_quadratics} display the results for the full and partial participation settings, respectively. The dashed lines indicate the theoretical bounds derived in Theorems~\ref{thm:fedexprox_quad_time} and \ref{thm:fedexprox_quad_time_stoch}, within which the optimal value of~$\gamma$ is expected to lie.
When the communication cost~$\ct$ is relatively low ($\ct \propto 100$), the optimal~$\gamma$ is close to zero. However, as~$\ct$ increases, the optimal~$\gamma$ shifts to values exceeding~$0.1$.
This results in a characteristic U-shaped curve, reflecting the nontrivial optimal choice of~$\gamma.$ These observations are fully consistent with our theoretical predictions.

\subsection{Experiments with Smooth Hinge Loss}

\subsubsection{Synthetic Data}

\begin{figure*}
    \centering
    \includegraphics[width=0.95\textwidth]{figures/hinge.pdf}
    \caption{Empirical time complexities of \algname{FedExProx} on a synthetic classification task with smooth hinge loss.}
    \label{fig:full_hinge}
\end{figure*}

\begin{figure*}
    \centering
    \includegraphics[width=0.95\textwidth]{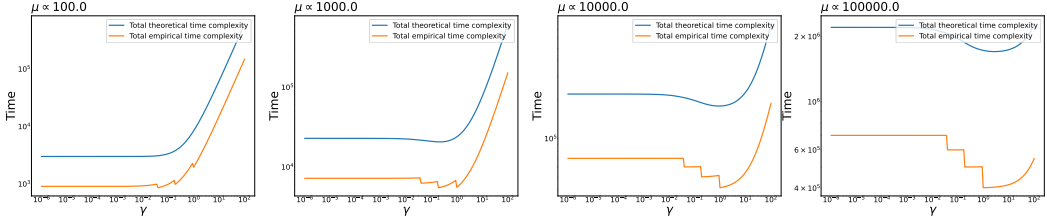}
    \caption{Comparison of theoretical time complexity \eqref{eq:fedexprox_quad_time} and empirical time complexity~\eqref{eq:emperical_time_compl}.}
    \label{fig:full_hinge_ratio}
\end{figure*}

We now turn to a classification problem involving linearly separable data $\{(x_{ij},y_{ij})\}_{i \in [n], j \in [m]}$, ensuring the interpolation regime.
Specifically, we construct a synthetic binary classification problem by independently sampling $n \times m$ data points $x_{ij}\in\R^d$ and a ground-truth weight vector $w^*\in\R^d$ from the standard normal distribution. Labels are then assigned as $y_{ij} = \textnormal{sign}(x_{ij}^T w^*)$.

The function $f :\, \R^d \rightarrow \R$ is defined as
\begin{align*}
    f(w) =\frac{1}{n}\sum\limits_{i=1}^{n}f_{i}(w) :=  \frac{1}{n}\sum\limits_{i=1}^{n}\frac{1}{m}\sum\limits_{j=1}^{m}\ell_{ij}(w),
\end{align*}
where $(x_{ij},y_{ij}) \in \R^{d} \times \{-1,1\}$ for all $i \in [n], j \in [m]$ and $\ell_{ij}$ is the smooth hinge loss, defined by 
\begin{align*}
    \ell_{ij}(w) = 
    \begin{cases} 
        0, & \text{if } y_{ij} w^T x_{ij} \geq 1, \\
        \frac{1}{2} (1 - y_{ij} w^T x_{ij})^2, & \text{if } 0 < y_{ij} w^T x_{ij} < 1, \\
        1 - y_{ij} w^T x_{ij}, & \text{if } y_{ij} w^T x_{ij} \leq 0.
    \end{cases}
\end{align*}
Each client $i \in [n]$ computes the local proximal mappings using \algname{GD} with stepsize $\nicefrac{1}{\bar{L}_{i}},$ where $\bar{L}_{i} = \frac{1}{m}\sum_{j=1}^{m} \norm{x_{ij}}^{2}$ is an upper bound on the smoothness constant of the function $f_{i}$. The extrapolation parameter $\alpha$ is determined via grid search within the theoretical range given by Lemma~\ref{lemma:m_gamma_convex_smooth}. We use $n = 4$ workers, each holding $m = 4$ datapoints, and the number of parameters is $d = 3$.

The results are presented in Figure~\ref{fig:full_hinge}. As in Section~\ref{sec:exp_quad}, these experiments again suggest a non-zero optimal choice of $\gamma$ when communication is slow. Indeed, the values of $\gamma$ minimizing the U-shapes in the last three plots are around $1$. These experiments provide further support for our theory.

\paragraph{Comparing theoretical and empirical time complexities.}
\label{sec:compare_theory_practice}

Next, we compare the theoretical and empirical time complexities  \eqref{eq:fedexprox_quad_time} and \eqref{eq:emperical_time_compl}. Both are plotted on the same scale in Figure~\ref{fig:full_hinge_ratio}. We see that our theoretical model reflects the empirical dependencies (up to a multiplicative factor).

\subsubsection{Real Data}\label{sec:arcene}

We further validate our findings on real-world data by repeating the experiment on the \texttt{ARCENE} dataset \citep{arcene_167}. We subsample $20$ random data points and restrict attention to $30$ randomly chosen features. The resulting plot in Figure~\ref{fig:full_hinge_arcene} again exhibits the characteristic U-shape behavior as the communication cost increases, mirroring the synthetic case and reinforcing the robustness of our observations

\begin{figure*}
    \centering
    \includegraphics[width=0.95\textwidth]{figures/ARCENE_results.pdf}
    \caption{Empirical time complexities of \algname{FedExProx} on the \texttt{ARCENE} classification task with smooth hinge loss.}
    \label{fig:full_hinge_arcene}
\end{figure*}

\newpage

\section{Notation}

\bgroup
\def\arraystretch{1.3}
\begin{table}[H]
\label{table:unbalanced}
	\small
	\centering
	\begin{tabular}{|c|p{10.5cm}|}
	\hline
	\multicolumn{2}{|c|}{Notation} \\
	\hline
            $\norm{\cdot}$ & Standard Euclidean norm \\
            $\inp{\cdot}{\cdot}$ & Standard Euclidean inner product \\
            $[k]$ & $\eqdef \{1,\ldots,k\}$ \\
            $\ExpSub{k}{\cdot}$ & Expectation conditioned on the first $k$ iterations \\
            \hline
            $n$ & Number of workers/nodes/clients/devices \\
            $d$ & Dimensionality of the problem \\
            $\gamma$ & Stepsize \\
            $\alpha_k$ & Extrapolation parameter \\
            $L$, $L_i$, $L_{\max}$ & Smoothness constants (Assumption \ref{ass:local_lipschitz_constant}) \\
            $\prox_{\gamma f} (x)$ & Proximal operator of the function $f$ with parameter $\gamma$ (see \eqref{eq:prox}) \\
            $M_f^{\gamma}$ & Moreau envelope of the function $f$ with parameter $\gamma$ (see \eqref{eq:moreau}) \\
            $M^{\gamma}$ & $\eqdef \frac{1}{n} \sum_{i=1}^n M_{f_i}^{\gamma}$ \\
            $L_{\gamma}$ & Smoothness constant of $M^{\gamma}$ \\
            $\mu^{+}_{\gamma}$ & The smallest non-zero eigenvalue of the matrix $\nabla^2 M^{\gamma}$ (see Theorems \ref{thm:quad}, \ref{thm:quad_iter_stoch}) / P{\L} constant (see Theorems \ref{thm:fedexprox_pl_iter}, \ref{thm:fedexprox_pl_iter_stoch}) \\
            $\pi(\gamma)$ & Total time per global iteration of Algorithm \ref{algorithm:fedexprox} / \ref{algorithm:batch_fedexprox} \\
            $T_{\ct}(\gamma)$ & Time complexity of Algorithm \ref{algorithm:fedexprox} \\
            $T_{\ct}(\gamma,S)$ & Time complexity of Algorithm \ref{algorithm:batch_fedexprox} \\
            $\cX_*$ & $\eqdef \{x\in\R^d: \nabla f_i(x) = 0\}$ \\
            $\Pi(x)$ & Projection of $x$ onto the solution set $\cX_*$ \\
            $R^2$ & $\eqdef \norm{x_* - x_0}^2$ \\
            $D_f(x,y)$ & Bregman divergence between $x$ and $y$ associated with a function $f\,:\, \R^d \to \R$ \\
            \hline
            $\mI$ & Identity matrix \\
            $\mA_i$ & Local data matrix stored on worker $i$, with spectral decomposition $\mA_i = \mQ_i \blambda_i \mQ_i^\top$ \\
            $\mA$ & $\eqdef \frac{1}{n} \sum_{i=1}^{n} \mA_i$ \\
            $\textnormal{Sym}^{d}_{+}$ & $\eqdef \{\mX \in \R^{d\times d} \,|\, \mX=\mX^\top, \mX\succeq 0\}$ - the set of symmetric positive semidefinite matrices \\
            $\ker \mA$ & Kernel of a matrix $\mA$ \\
            $\mM_k$ & $\eqdef \frac{1}{\gamma} \parens{\mI - \frac{1}{S}\sum_{i\in\cS_k}(\gamma \mA_i+\mI)^{-1}}$ \\
            $\mM$ & $\eqdef \frac{1}{\gamma} \parens{\mI - \frac{1}{n}\sum_{i=1}^{n}(\gamma \mA_i+\mI)^{-1}}$ \\
            $\lambda_{\max}(\mB)$ & The largest eigenvalue of matrix $\mB$ \\
            $\lambda_{\min}(\mB)$ & The smallest eigenvalue of matrix $\mB$ \\
            $\lambda_{\min}^+(\mB)$ & The smallest positive eigenvalue of matrix $\mB$ \\
            $[\mB]_{j}$ & The $j$th diagonal element of a diagonal matrix $\mB$ \\
            $[b_j]_{jj}$ & A diagonal matrix with $b_j$ as the $j$th entry \\
        \hline
	\end{tabular}
	\caption{Frequently used notation.}
	\label{table:notation}
\end{table}
\egroup

\paragraph{Note on LLM Usage.}
Large Language Models were used to assist in polishing the writing of the manuscript. LLM assistance did not contribute to the scientific content of the paper.

\end{document}